\newcommand{\g}{\mathfrak{g}}
\newcommand{\p}{{\mathfrak{p} }}
\newcommand{\prs}{\langle\;,\;\rangle}
\newcommand{\too}{\longrightarrow}
\newcommand{\om}{\omega}
\newcommand{\esp}{\quad\mbox{and}\quad}
\def\br{[\;,\;]}
\newcommand{\G}{{\mathfrak{g}}}
\newcommand{\U}{{\mathfrak{U}}}
\newcommand{\A}{{\mathcal{A}}}
\newcommand{\h}{{\mathfrak{h}}}
\newcommand{\ad}{{\mathrm{ad}}}
\newcommand{\tr}{{\mathrm{tr}}}
\newcommand{\B}{{\cal B}}
\newcommand{\Om}{\Omega}
\newcommand{\si}{\sigma}
\newcommand{\al}{\alpha}
\newcommand{\be}{\beta}
\newcommand{\ga}{\gamma}
\newcommand{\Ga}{\Gamma}
\newcommand{\e}{\epsilon}
\newcommand{\la}{\lambda}
\newcommand{\de}{\delta}
\newcommand{\R}{\mathbb{R}}
\newtheorem{theo}{Theorem}[section]
\newtheorem{pr}{Proposition}[section]
\newtheorem{co}{Corollary}[section]
\newtheorem{exem}{Example}
\newtheorem{remark}{Remark}
\begin{document}

\begin{frontmatter}


 

\title{   Left invariant generalized  complex and K\"ahler structures  on simply connected four dimensional Lie groups: classification and invariant cohomologies}

 \author[label1,label2]{Mohamed Boucetta, Mohammed Wadia Mansouri}
 \address[label1]{Universit\'e Cadi-Ayyad\\
 	Facult\'e des sciences et techniques\\
 	BP 549 Marrakech Maroc\\e-mail: m.boucetta@uca.ac.ma
 }
 \address[label2]{Universit\'e Ibno Tofail\\ Facult\'e des Sciences\\e-mail: mansouriwadia@gmail.com}
 



\begin{abstract}  We give a complete classification of  left
	invariant generalized  complex structures of type 1  on four dimensional simply connected  Lie
groups and we compute for each class its invariant generalized  Dolbeault cohomology, its invariant generalized Bott-Chern cohomology and its invariant generalized Aeppli cohomology. We classify also left invariant generalized K\"ahler structures on four dimensional simply connected  Lie
groups.
	
\end{abstract}

\begin{keyword} Generalized complex  structures \sep Generalized K\"ahler structures \sep   Lie groups  \sep  Lie algebras   \sep Dolbeault cohomology \sep Bott-Chern cohomology \sep  Aeppli cohomology
\MSC 53D18 \sep \MSC 22E25 \sep \MSC 17B30


\end{keyword}

\end{frontmatter}







\section{Introduction} \label{section1}
Generalized complex geometry, recently introduced by   Hitchin \cite{H} and developed by Gualtieri \cite{Ga}, unifies complex and symplectic geometries and shares many properties with them. Several aspects of this geometry and its possible applications in string theory have been studied by many authors (see for instance \cite{an, C, string}). On the other hand, in \cite{alex} some classes of left invariant generalized complex structures on semi-simple Lie groups have been described and in \cite{cav} all  6-dimensional nilmanifolds having invariant generalized complex structures were classified.

The  purpose of this paper is to give a complete classification of  left
invariant generalized  complex structures of type 1 on  simply connected four dimensional  Lie
groups.  Also, for each obtained class, we compute its invariant  generalized  Dolbeault cohomology, its invariant generalized Bott-Chern cohomology and its invariant generalized Aeppli cohomology. As an application, we give also a complete classification of left
invariant generalized  K\"ahler structures  on  simply connected four dimensional  Lie
groups.
Generalized complex structures of type 2 and 0 are equivalent, respectively, to  left invariant complex structures and left invariant symplectic structures and, in dimension four, were   classified in \cite{O1, O2, Snow}. Classical left invariant K\"ahler structures on four dimensional Lie groups were also classified in \cite{O1}.

Let $M$ be a differentiable manifold and let
\[\mathcal{TM}=TM\oplus T^*M.\]
The Courant bracket and the scalar product of two sections $X+\al$, $Y+\be$ of $\mathcal{TM}$ are given by
\[ [X+\al,Y+\be]_c=[X,Y]+\mathcal{L}_X\be-\mathcal{L}_Y\al-
\frac12d\left(\prec\be,X\succ-\prec\al,Y\succ \right) \esp
\langle X+\al,Y+\be\rangle=\frac12(\prec\al,Y\succ+\prec\be,X\succ). \]

We recall that  a generalized complex structure on  $M$ is an endomorphism field $\mathcal{J}\in \mathrm{End}(\mathcal{TM})$ such that, for any $a,b\in\Ga(\mathcal{TM})$, 
\[\mathcal{J}^2=-\mathrm{Id},\; \langle \mathcal{J}a,b\rangle+\langle \mathcal{J}b,a\rangle=0\esp N_{\mathcal{J}}=0, \]
where $N_{\mathcal{J}}$ is the Nijenhuis torsion of $\mathcal{J}$ with respect to the Courant bracket, i.e., $$
N_{\mathcal{J}}(a,b)=[\mathcal{J}a,\mathcal{J}b]_c-\mathcal{J}[\mathcal{J}a,b]_c-\mathcal{J}[a,\mathcal{J}b]_c+\mathcal{J}^2[a,b]_c.$$

The study of left invariant generalized complex structures on a Lie group can be carried out at the level of its Lie algebra. More precisely, 
let $G$ be a simply connected Lie group and $(\G,\br)$ its Lie algebra. The neutral metric $\prs$ and  the Courant bracket    when restricted to $\Phi(\G)=\G\oplus\G^*$ define a neutral metric denoted similarly by $\prs$ and a   bracket given, for any $u,v\in\G,\al,\be\in\G^*$, by
\[ [u+\al,v+\be]^\rhd=[u,v]+\ad_u^t\be-\ad_v^t\al, \]where
$\prec\ad_u^t\al,v\succ=-\prec\al,[u,v]\succ$.  This bracket satisfies the Jacobi identity even if in general the Courant bracket doesn't, moreover $\prs$ is bi-invariant.

A left invariant generalized complex structure on $G$ is equivalent to  a complex structure on $(\Phi(\G),\br^\rhd)$ which is skew-symmetric with respect to the bi-invariant symmetric 2-form $\prs$. That is   an endomorphism $K:\Phi(\G)\too\Phi(\G)$ such that, $K^2=-\mathrm{Id}_{\Phi(\G)}$ and $N_K=0$. With respect to the splitting $\Phi(\G)=\G\oplus\G^*$, $K$ can be written
\begin{equation}\label{k} K=\left(\begin{array}{cc}J&R\\ \si&-J^*\end{array} \right) \end{equation}
where $J:\G\too\G$ is an endomorphism, $J^*:\G^*\too\G^*$ its dual and $R:\G^*\too\G$,  $\si:\G\too\G^*$ are skew-symmetric linear maps. When $K$ defines a left invariant generalized complex structure on $G$, the triple $(J,R,\sigma)$ will be called generalized complex structure on $\G$. This is the main object of this paper. There are two ways of characterizing generalized complex structures on Lie algebras. Let us give the first one which is an adaptation of the  result   established in \cite[Proposition 2.2]{C}. 

\begin{pr}\label{pr2} Let $(\G,J,R,\si)$ be a Lie algebra endowed with a triple as above. Then $(J,R,\si)$ is a generalized complex structure if and only if, for any $u,v,w\in\G$, $\al,\be\in\G^*$,
	\begin{enumerate}
		\item[$(C0)$] $J^2+{R} \circ \si=-\mathrm{Id}_{\G}$,
		$J\circ\mathrm{R}={R}\circ J^*$ and $\si\circ J=J^*\circ\si$,
		\item[$(C1)$] $[{R},{R}]=0$,
		\item[$(C2)$] $J^*[\al,\be]_{R}=\ad_{{R}(\al)}^tJ^*\be-\ad_{{R}(\be)}^tJ^*\al$,
		\item[$(C3)$] $N_{J}(u,v)={R}(i_{u\wedge v}\mathrm{d}\si^\flat)$, where $\si^\flat(u,v)=\prec\si(u),v\succ$,
		\item[$(C4)$] $\mathrm{d}\si_{J}(u,v,w)=\mathrm{d}\si^\flat(Ju,v,w)+\mathrm{d}\si^\flat(u,Jv,w)+\mathrm{d}\si^\flat(u,v,Jw)$,
		where $\si_{J}(u,v)=\si^\flat(Ju,v),$
	\end{enumerate}where $[R,R]\in\wedge^3\G$ is the Schouten bracket given by
	\[ [R,R](\al,\be,\ga)=\prec\al,[R(\be),R(\ga)]\succ+\prec\be,[R(\ga),R(\al)]\succ+\prec\ga,[R(\al),R(\be)]\succ,
 \] and $\br_R$ is the bracket on $\G^*$ given by
	\[ [\al,\be]_R=\ad_{R(\al)}^t\be-\ad_{R(\be)}^t\al. \]
	\end{pr}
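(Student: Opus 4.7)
The plan is to evaluate each of the three defining conditions on $K$ in the block form \eqref{k} and split into $\G$- and $\G^*$-components; each of the conditions $K^2=-\mathrm{Id}$, skew-symmetry with respect to $\prs$, and $N_K=0$ produces a distinct group of the stated relations.

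\emph{From $K^2=-\mathrm{Id}$ and skew-symmetry.} I would expand $K^2(u+\al)$ directly and match $\G$- and $\G^*$-components against $-(u+\al)$; this yields the three relations of $(C0)$. The dual relations on the $\G^*$-side, namely $(J^*)^2+\si R=-\mathrm{Id}_{\G^*}$ and $\si J=J^*\si$, follow automatically from their counterparts once one uses $R^*=-R$ and $\si^*=-\si$, which are themselves a direct expansion of the skew-symmetry of $K$ with respect to $\prs$.

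\emph{From $N_K=0$ on covector-covector pairs.} Since $[\al,\be]^\rhd=0$, expanding $N_K(\al,\be)$ gives a $\G$-component equal to $[R\al,R\be]-R[\al,\be]_R$. Pairing with an arbitrary $\ga\in\G^*$ and using $R^*=-R$ identifies this with $[R,R](\al,\be,\ga)$, so vanishing is exactly $(C1)$. The $\G^*$-component is $J^*[\al,\be]_R - \ad_{R\al}^t J^*\be + \ad_{R\be}^t J^*\al$, giving $(C2)$.

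\emph{From $N_K=0$ on vector-vector pairs.} The $\G$-component equals $N_J(u,v)+R\bigl(\si[u,v]+\ad_v^t\si u-\ad_u^t\si v\bigr)$; the term $R\si[u,v]$ arises from $J^2[u,v]=-[u,v]-R\si[u,v]$, itself a consequence of $(C0)$. Using $\prec\ad_x^t\xi,y\succ=-\prec\xi,[x,y]\succ$ and the Chevalley--Eilenberg formula for a $2$-form, the bracketed expression evaluated at $w$ equals $-\mathrm{d}\si^\flat(u,v,w)$, so vanishing becomes $(C3)$. The $\G^*$-component expands into seven $\si$-terms; after pairing with $w\in\G$ and expressing each through $\si^\flat$ via $\prec J^*\xi,w\succ=\prec\xi,Jw\succ$, they regroup into $\mathrm{d}\si_J(u,v,w)-\mathrm{d}\si^\flat(Ju,v,w)-\mathrm{d}\si^\flat(u,Jv,w)-\mathrm{d}\si^\flat(u,v,Jw)$, which is $(C4)$. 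The remaining mixed case $N_K(u,\al)$ yields no new condition: since $\br^\rhd$ is a genuine Lie bracket, the general identity $N_K(Ka,b)=-KN_K(a,b)$ together with $(C0)$ reduces its vanishing to the cases already treated.

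The main obstacle is the $\G^*$-component of $N_K(u,v)$: rearranging the raw $\si$-terms into the four Chevalley--Eilenberg cochains appearing in $(C4)$ requires simultaneously invoking the skew-symmetry of $\si^\flat$ and the $(C0)$ identity $J^*\si=\si J$ to shift the action of $J$ between arguments. The remaining steps are mechanical, and, as noted just before the statement, the overall argument adapts the computation in \cite[Proposition 2.2]{C}.
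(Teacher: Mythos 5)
Your component-wise expansion of $K^2=-\mathrm{Id}$, of the skew-symmetry, and of $N_K$ on vector--vector and covector--covector pairs is correct, and it is the computation the statement rests on (the paper gives no proof of this proposition, deferring to \cite[Proposition 2.2]{C}, so the direct block expansion is indeed the intended route). Your identifications of the $\G$- and $\G^*$-components with $(C0)$--$(C4)$, including the pairing with $\ga$ that turns $[R\al,R\be]-R[\al,\be]_R$ into $[R,R](\al,\be,\ga)$ and the Chevalley--Eilenberg regrouping behind $(C3)$ and $(C4)$, all check out.

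The gap is in your dismissal of the mixed case $N_K(u,\al)$. The identity $N_K(Ka,b)=-KN_K(a,b)$ does hold (equivalently $N_K(a,b)=KN_K(Ka,b)$, since $K^{-1}=-K$), but applying it to a mixed pair does not land in the cases already treated: writing $Ku=Ju+\si u$ gives $N_K(u,\al)=KN_K(Ju,\al)+KN_K(\si u,\al)$, and while the second term is a covector--covector pair and vanishes, the first is again mixed. Iterating only yields $N_K(u,\al)=-N_K(J^2u,\al)$, hence $N_K(R\si(u),\al)=0$, which is vacuous whenever $J^2=-\mathrm{Id}$ on a large subspace (e.g.\ when $\si=0$ or $R=0$); the same circularity occurs if you act on the covector slot instead. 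So the ``if'' direction of the equivalence is not established by your argument. A correct way to close it: since $\prs$ is bi-invariant for the bracket $\br^\rhd$, the trilinear form $T(a,b,c)=\langle[a,b]^\rhd,c\rangle$ is totally skew-symmetric, and the $+i$-eigenspace $\mathrm{L}$ of $K$ is maximal isotropic, so $\mathrm{L}^{\perp}=\mathrm{L}$ and involutivity of $\mathrm{L}$ (equivalently $N_K=0$) is the vanishing of $T\in\wedge^3\mathrm{L}^*$ on triples from the spanning set $\{u-iKu\}\cup\{\al-iK\al\}$; every such triple contains two entries of the same kind, and the vector--vector and covector--covector computations each kill $T(\cdot,\cdot,c)$ for \emph{all} $c\in\mathrm{L}$, so they already cover every triple. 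Alternatively, expand $N_K(u,\al)$ directly and verify that both of its components follow from $(C0)$--$(C4)$; either repair completes your proof.
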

	
	Generalized complex structures can be also characterized by using the spinors (see \cite{Ga}). Let $(\G,J,R,\si)$ be a generalized complex structure.  Consider $\mathrm{L}$  the maximal isotropic subspace of $\Phi(\G)\otimes\mathbb{C}$  given by
	\[ \mathrm{L}=\left\{ X+\xi-i(JX+R(\xi)+\si(X)-J^*\xi),X\in\G,\xi\in\G^*     \right\}. \] According to \cite{Ga}, there exists a spinor $\rho\in\wedge^\bullet\G^*\otimes\mathbb{C}$ such that
	\[ \mathrm{L}=\mathrm{Ann}(\rho)=\left\{ u\in\Phi(\G)\otimes\mathbb{C}, u.\rho=0    \right\} \]	where $(X+\xi).\rho=i_X\rho+\xi\wedge\rho$. Moreover, $(\G,J,R,\si)$ satisfies the condition of Proposition \ref{pr2} if and only if there exists $X+\xi\in\Phi(\G)$ such that
	\begin{equation}
	\mathrm{d}\rho=(X+\xi).\rho.
	\end{equation}The  form $\rho$ is called the pure spinor associated to  $(\G,J,R,\si)$.
	Note that if $\om$ is a symplectic 2-form on $\G$ and $J:\G\too\G$ is a complex isomorphism with $N_J=0$ then
	\[ K^\om=\left(\begin{array}{cc}0&-\om^{-1}\\\om&0\end{array} \right)\quad\mbox{respectively}\quad K^J=\left(\begin{array}{cc}J&0\\0&-J^*\end{array} \right) \]	are generalized complex structures on $\G$.

	According to \cite[Definition 3.5]{Ga}, the type of a generalized complex structure $(\G,J,R,\si)$ is the integer $$k=\frac12\dim_\R(\G^*\cap K(\G^*))=\frac12\dim_\R(\h^0),$$ where $\h=\mathrm{Im\;}R$ and $\h^0$ its annulator. 
	In the case of a four dimensional Lie algebra, we have three types: 0,1 or 2. The type 0 corresponds to $R$ invertible, $1$ to $R$ of rank 2 and 2 to $R=0$.

	In this paper, we classify generalized complex structures and we will give now a precise definition of the equivalence relation under which the classification will be carried on.
	
	An automorphism of a Lie algebra $\G$ is an isomorphism which preserves the Lie bracket. A 2-cocycle of $\G$ is an endomorphism $B:\G\too\G^*$ which is skew-symmetric and satisfies, for any $u,v,w\in\G$,
	\[ \prec B([u,v]),w\succ+\prec B([v,w]),u\succ+\prec B([w,u]),v\succ=0. \]
	Let $(\G,J,R,\si)$ be a generalized complex structure on $\G$, $A:\G\too\G$ an automorphism of $\G$ and $B:\G\too\G^*$ a 2-cocycle.  Then
	\begin{eqnarray} K_A&=&\left(\begin{array}{cc}A&0\\0&(A^{-1})^*\end{array} \right)\left(\begin{array}{cc}J&R\\\si&-J^*\end{array} \right)\left(\begin{array}{cc}A^{-1}&0\\0&A^*\end{array} \right)=
	\left(\begin{array}{cc}AJA^{-1}&ARA^*\\(A^{-1})^*\si A^{-1}&-(A^{-1})^*J^*A^*\end{array} \right),\nonumber\\
	K_B&=&\left(\begin{array}{cc}\mathrm{Id}_\G&0\\B&\mathrm{Id}_{\G^*}\end{array} \right)\left(\begin{array}{cc}J&R\\\si&-J^*\end{array} \right)\left(\begin{array}{cc}\mathrm{Id}_\G&0\\-B&\mathrm{Id}_{\G^*}\end{array} \right)=\left(\begin{array}{cc}J-RB&R\\BJ+\si-BRB+J^*B&BR-J^*\end{array} \right),\nonumber\\ \label{t} \end{eqnarray}
	are also generalized complex structures on $\G$. It is a consequence of the fact that the transformations $\phi(A)=\left(\begin{array}{cc}A&0\\0&(A^{-1})^*\end{array} \right)$ and $\exp(B)=\left(\begin{array}{cc}\mathrm{Id}_\G&0\\B&\mathrm{Id}_{\G^*}\end{array} \right)$ preserve the Courant bracket and the metric on $\Phi(\G)$. 
	The transformation $\exp(B)$ is known as a $B$-transformation. We refer to $\phi(A)$ as an automorphism transformation. 
	As a consequence, we adopt the following relation of equivalence between generalized complex structures.
	Through this paper, two generalized complex structures on $\G$ will be called equivalent if they are conjugate by a product of transformations of the form $\phi(A)$ and $\exp(B)$.

It is well-known \cite{Ga} that a generalized complex structure of type 2 is equivalent to a $K^J$ and a generalized complex structure of type 0 is equivalent to a $K^\om$. So in this paper, we deal mostly with generalized complex structures of type 1.
	 
Let us state our main results now.

\begin{enumerate}
	\item The classification of real four dimensional Lie algebras was obtained by M. Mubarakzyanov \cite{M}.  We use the notations of \cite{patera} and present in two tables  the list of  all four dimensional real Lie algebras.\footnote{Actually, there is a redundancy in the lists of Mubarakzyanov and Patera: the Lie algebras $A_{4,5}^{-1,-1}$ and $A_{4,5}^{-1,1}$ are isomorphic, so we drop the first one from our list.} Table \ref{1} contains the nonunimodular ones and Table \ref{2} the unimodular ones. For each Lie algebra in these tables  we precise whether or not  it admits a generalized complex structure of type 0,1 or 2. Since our classification completes the classification of invariant complex and symplectic structures given in \cite{O1, O2, Snow}, for each Lie algebra in our lists we give also the corresponding Lie algebra of the list used in \cite{O1, O2}. It is important here to mention that our study, combined with the results in \cite{O1, O2, Snow} show
	that the  four dimensional Lie algebras which have no generalized complex structure are: $A_{3,2}\oplus A_1$, $A_{3,5}^\al\oplus A_1$ with $0<|\al|<1$, $A_{4,4}$, $A_{4,2}^{\al}$ with $|\al|\not=1$ and  $A_{4,5}^{\al,\be}$ with  $-1<\al<\be<1$ and $\al+\be\not=0$. Moreover,   $A_{3,7}^\al$ with $\al>0$ and $A_{4,5}^{1,1}$ don't have neither a complex structure nor a symplectic structure but carry a generalized complex structure of type 1.

	\item For each Lie algebra $\G$ in Table \ref{1} and Table \ref{2}, we give explicitly the classes of  generalized complex structures $(J,R,\sigma)$ of type 1 carried by $\G$ with their pure spinors (see Tables \ref{8} and \ref{9}).
	
	\item Generalized complex structures with $\si=0$ are in correspondence with holomorphic Poisson tensors. We show that any left invariant holomorphic Poisson tensor in a four dimensional Lie group is invertible and defines an holomorphic symplectic form. There are four simply connected  Lie groups of dimension four which carry a holomorphic Poisson tensor, namely, those associated to the Lie algebras $A_{3,1}\oplus A_1$, $A_{4,5}^{-1,1}$, $A_{4,9}^{-\frac12}$ and $A_{4,12}$   (See Theorem \ref{holom}).
	\item Recall that a generalized K\"ahler structure on a Lie algebra $\G$ is a couple of commuting generalized complex structures $(\mathcal{J}_1,\mathcal{J}_2)$ such that $G=\langle \mathcal{J}_1\mathcal{J}_2.,.\rangle$ is  positive definite. We show that in dimension four either $\mathcal{J}_1$ is of type 2 and $\mathcal{J}_2$ of type 0 and $(\mathcal{J}_1,\mathcal{J}_2)$ is equivalent to a classical K\"ahler structure or both $\mathcal{J}_1$ and  $\mathcal{J}_2$ have type 1 and we classify such couples (see Theorem \ref{type11}). It is worth mentioning that there are only  four  Lie algebras of dimension four which carry a non classical generalized K\"ahler structure, namely, $A_{3,6}\oplus A_1$, $A_2\oplus 2A_1$,  $2A_1$ and $A_{4,6}^{\al,0}$ with $\al\not=0$ (see Theorem \ref{type11}). 
	The author can consult \cite{GaK} for an introduction to generalized K\"ahler geometry.
	
	\item Among the main invariants of generalized complex structures one can mention their different cohomologies (see \cite{danielle, Ga}). For any class of generalized complex structure in Table 8 and 9, we compute explicitly its invariant  Dolbeault cohomology, its invariant Bott-Chern cohomology and its invariant Aeppli cohomology.

\end{enumerate}

\paragraph{Organization of the paper}  Section \ref{section2} contains the basic results which are essential to the classification.  Proposition \ref{step1},  Theorems \ref{main1}, \ref{main2} and \ref{main3} are the key steps in this classification. We give also the pure spinor associated to any generalized complex structure of type 1 in a four dimensional Lie algebra and the condition for this structure to be Calabi-Yau (See Propositions \ref{spinor}-\ref{cy}). 
Sections \ref{section3} is devoted to the classification of generalized complex structures of type 1. At the end of this section, we give the classes of left invariant holomorphic Poisson tensors on four dimensional  simply connected Lie groups. Section \ref{section6} is devoted to the classification of generalized K\"ahler structures in dimension 4. Section \ref{section4} is  devoted to the computation of the different cohomologies. Section \ref{section5} contains all the  tables and Section \ref{section7} is an Appendix containing the details of the computations needed in the proof of Theorem \ref{type1}.

\paragraph{ Notations } For  $\mathbb{B}=(e_1,\ldots,e_n)$ a basis of a real vector space $V$, we denote by $\mathbb{B}^*=(e^1,\ldots,e^n)$ it dual basis. The elements of the bases of $\wedge^k\G$ and $\wedge^k\G^*$ will be denoted by $e_{ij}=e_i\wedge e_j$,  $e^{ij}=e^i\wedge e^j$, $e^{ijk}=e^i\wedge e^j\wedge e^k$ etc.. and $E_{ij}$ is the endomorphism which sends $e_j$ to $e_i$ and vanishes on $e_k$ for $k\not=j$. For any $\om\in\wedge^2 V^*$, we denote by $\om_\#:V\too V^*$ the endomorphism, $u\mapsto i_u\om$ and for any $\pi\in\wedge^2 V$, we denote by $\pi^\#:V^*\too V$ given by $\prec\be,\pi^\#(\al)\succ=\pi(\al,\be)$. One must be careful that
\[ M(\pi,\mathbb{B}^*)=-M(\pi^\#,\mathbb{B}^*,\mathbb{B})\esp
M(\om,\mathbb{B})=-M(\om_\#,\mathbb{B},\mathbb{B}^*). \]
For instance, in dimension 2,
\[ M(E_{12},\mathbb{B})=\left(\begin{array}{cc}0&1\\0&0   \end{array} \right),
M(e_{12}^\#,\mathbb{B}^*,\mathbb{B})=\left(\begin{array}{cc}0&-1\\1&0   \end{array} \right)\esp M(e^{12}_\#,\mathbb{B},\mathbb{B}^*)=\left(\begin{array}{cc}0&-1\\1&0   \end{array} \right). \]
The task of classification  involved a huge amount of computations  performed with  the computation software Maple.

The authors wish to thank Oumaima Tibssirte for having read carefully the first version of the paper and pointing out many typing and style mistakes.

\section{Preliminaries}\label{section2}

In this section, we give all the preliminaries  needed  to reach the purpose of this paper.
We start by deriving some important properties of generalized complex structures of type 1 on four dimensional Lie algebras.

Let $\G$ be a four dimensional Lie algebra and $(J,R,\si)$  a generalized complex structure on $\G$ of type 1, i.e., $(J,R,\si)$ satisfy the properties $(C0)-(C4)$ of Proposition \ref{pr2} and $\dim\mathrm{Im\;}R=2$.

The property $C0$  doesn't involve the Lie algebra structure but it has a crucial consequence in dimension 4. Rigorously speaking,
since $J$ commutes with $R$, the vector subspace $\h=\mathrm{Im\;}R$ is invariant by $J$ and the formula $$\om(R(\be),R(\ga))=\prec\be, R(\ga)\succ,\quad\be,\ga\in\G^*$$ defines a symplectic form on $\h$ satisfying $\om(Ju,v)=\om(u,Jv)$ for any $u,v\in\h$. This combined with $\dim\h=2$ imply that the restriction $J_1$ of $J$ to $\h$ satisfies $J_1=\la\;\mathrm{Id}_\h$ with $\la\in\R$. 
 
 On the other hand, since $J$ commutes with $\si$, the vector subspace $\p=\si^{-1}(\h^0)$, where $\h^0$ is the annulator of $\h$, contains $\ker\si$ and is invariant by $J$.  From the relation $J^2+R\circ\si=-\mathrm{Id}_\G$ we deduce that the restriction $L$ of $J$ to $\p$ satisfies $L^2=-\mathrm{Id}_\p$ and hence $\h\cap\p=\{0\}$. But $\p\not=\{0\}$ otherwise $\ker\si=\{0\}$ which is impossible so $\dim\p=2$ and we get $\G=\h\oplus\p$. 
 
 The restrictions of $J$ to $\p$ and  of $J^*$ to  $\h^0$  induce on these vector spaces a  1-dimensional complex structure and   $\si_2=\si_{|\p}:\p\too\h^0$ preserves the complex structures. But $\h^0=\p^*$ and $\si_2$ is skew-symmetric so $\si_2=0$. Thus $\p=\ker\si$. Moreover, $\si_1=\si_{|\h}:\h\too\p^0$ is invertible and we have $\si_1=-(1+\la^2)R_1^{-1}=-(1+\la^2)\om$ where $R_1:\p^0\too\h$ is the restriction of $R$ to $\p^0$.

Next we deal with $C1-C4$. Note that $C1$ is equivalent to $\h$ being a Lie subalgebra of $\G$ and we have seen that  $\G=\h\oplus\p$. Furthermore,  let $(e_1,e_2)$ be an arbitrary basis of $\h$ and for any $e_3\in\p$ let $e_4=Je_3$ then  $(e_1,e_2,e_3,e_4)$ is a basis of $\G$ and
\[ Je_1=\la e_1, Je_2=\la e_2, Je_3=e_4, Je_4=-e_3, R=a e_{12}^\#\esp \si=a^{-1}(1+\la^2)e^{12}_\#, \]where $(e^1,e^2,e^3,e^4)$ is the dual basis. Write
\[ [e_1,e_2]=a_1e_1+a_2e_2,[e_3,e_4]=b_1e_1+b_2e_2+b_3e_3+b_4e_4, [u,v]=\phi_\h(u)(v)-\phi_\p(v)(u), \] where $u\in\h$, $v\in\p$, $\phi_\h(u)\in\mathrm{End}(\p)$ and $\phi_\h(v)\in\mathrm{End}(\h)$.

The relation $C2$ is equivalent to
\begin{equation}\label{c2} \prec\ga,[R(\be),Ju]-J[R(\be),u]\succ=\prec\be,[R(\ga),Ju]-J[R(\ga),u]\succ, \end{equation} for any $u\in\G$ and any $\ga,\be\in\G^*$.
Since $\h$ is a Lie subalgebra and  $J_{|\h}=\la\mathrm{Id}_\h$ this relation is obviously true for any $u\in\h$. The relation is also clearly true for $\ga,\be\in\h^0$. 
For $\ga\in\h^0$,  $\be\in\p^0$ and $u\in\p$ the relation \eqref{c2} gives
\[ \prec\ga,[\phi_\h(R(\be)),J](u)\succ=0. \]
Thus, for any $v\in\h$, $\phi_\h(v)$ preserves the complex structure of $\p$ induced by $J$.
For $\ga\in\p^0$,  $\be\in\p^0$ and $u\in\p$ the relation \eqref{c2} gives
\[ R\circ\phi_\p^*(Ju)+\phi_\p(Ju)\circ R=\la\left( R\circ\phi_\p^*(u)+\phi_\p(u)\circ R \right).\]Since $J^2u=-u$ for any $u\in\p$ this relation is equivalent to
\[ R\circ\phi_\p^*(u)+\phi_\p(u)\circ R=0 \]which is equivalent to $\tr(\phi_\p^*(u))=0$ for any $u\in\p$.

We consider now $C3$,
\[N_{J}(u,v)={R}(i_{u\wedge v}\mathrm{d}\si^\flat).\]
This relation is obviously true if $u,v\in\h$ since $N_J(u,v)=0$ and $\mathrm{d}\si^\flat_{|h}=0$. For $u\in\h,v\in\p$, having the relation $\phi_\h(u)\circ J=J\circ\phi_\h(u)$, we get
\begin{align*}
N_J(u,v)&=[Ju,Jv]-J[u,Jv]-J[Ju,v]+J^2[u,v]\\
&=\la\phi_\h(u)(Jv)-\la \phi_\p(Jv)(u)-J\phi_\h(u)(Jv)+J\phi_\p(Jv)(u)
-\la J\phi_\h(u)(v)+\la J\phi_\p(v)(u)+J^2\phi_\h(u)(v)-J^2\phi_\p(v)(u)\\
&=\la J\phi_\h(u)(v)-\la \phi_\p(Jv)(u)+\phi_\h(u)(v)+\la \phi_\p(Jv)(u)-
\la J\phi_\h(u)(v)+\la^2 \phi_\p(v)(u)-\phi_\h(u)(v)-\la^2\phi_\p(v)(u)\\
&=0.
\end{align*}On the other hand, for any $\be\in\G^*$,
\[ \mathrm{d}\si^\flat(u,v,R(\be))=\si^\flat(\phi_\p(v)(u),R(\be))-\si^\flat(\phi_\p(v)(R(\be)),u)=0 \]since $\tr(\phi_\p(v))=0$. So $C3$ is satisfied for $u\in\h$ and $v\in\p$. Now, a direct computation gives
\[ N_J(e_3,e_4)=(1+\la^2)(a_1e_1+a_2e_2). \]
For any $\be\in\p^0$, we have
\[ \mathrm{d}\si^\flat(e_3,e_4,R(\be))=-\si^\flat([e_3,e_4],R(\ga))=-\si^\flat(a_1e_1+a_2e_2,R(\ga)). \]
So $C3$ holds. Let us show now that $C4$ holds also.
Note first that $\si_J=\la\si^\flat$ and we have seen that $\mathrm{d}\si^\flat(u,v,w)=0$ whenever $u,v\in\h$. Now it is obvious that $C4$ is true for $(u,e_3,e_4)$ for any $u\in\h$. So far, we have shown the following result. 

\begin{pr}\label{step1} Let $(\G,J,R,\si)$ be a four dimensional Lie algebra  endowed with a generalized complex structure of type 1. Put $\h=\mathrm{Im\;}R$ and $\p=\ker\si$. Then $\G=\h\oplus\p$ and for any basis $(e_1,e_2)$ of $\h$ and any $e_3\in\p$, if $e_4=Je_3$ then the following assertions hold:
	\begin{enumerate}
		\item $J=\la(E_{11}+E_{22})+E_{34}-E_{43} ,\; R=a e_{12}^\#\esp\si=a^{-1}(1+\la^2)e^{12}_\#,\;\la\in\R,a\not=0.$
		 
		\item The Lie brackets are given by
		\[ \begin{cases}[e_1,e_2]=a_1e_1+a_2e_2,\;[e_3,e_4]=b_1e_1+b_2e_2+b_3e_3+b_4e_4,\\
		\;[e_1,e_3]=x_1e_3-y_1e_4-p_1e_1-r_1e_2,\;[e_1,e_4]=y_1e_3+x_1e_4-p_2e_1-r_2e_2,\\
\;		[e_2,e_3]=x_2e_3-y_2e_4-q_1e_1+p_1e_2,\;[e_2,e_4]=y_2e_3+x_2e_4-q_2e_1+p_2e_2.
\end{cases} \]
\item The Jacobi identity is equivalent to

\begin{equation}\tag{S}\label{S} \begin{cases}
a_{{1}}x_{{1}}+a_{{2}}x_{{2}}=0,\\
a_{{1}}y_{{1}}+a_{{2}}y_{{2}}=0,\\
a_{{1}}p_{{1}}+a_{{2}}q_{{1}}-p_{{1}}x_{{2}}+p_{{2}}y_{{2}}+q_{{1}}x_{
	{1}}-q_{{2}}y_{{1}}=0,\\
a_{{1}}r_{{1}}-a_{{2}}p_{{1}}-p_{{1}}x_{{1}}+p_{{2}}y_{{1}}-r_{{1}}x_{
	{2}}+r_{{2}}y_{{2}}=0,\\
a_{{1}}p_{{2}}+a_{{2}}q_{{2}}-p_{{1}}y_{{2}}-p_{{2}}x_{{2}}+q_{{1}}y_{
	{1}}+q_{{2}}x_{{1}}=0,\\
a_{{1}}r_{{2}}-a_{{2}}p_{{2}}-p_{{1}}y_{{1}}-p_{{2}}x_{{1}}-r_{{1}}y_{
	{2}}-r_{{2}}x_{{2}}=0,\\
a_{{1}}b_{{2}}-2\,b_{{1}}x_{{1}}-b_{{3}}p_{{1}}-b_{{4}}p_{{2}}+q_{{1}}
r_{{2}}-q_{{2}}r_{{1}}=0,\end{cases}\begin{cases}
a_{{2}}b_{{2}}-2\,b_{{2}}x_{{1}}-b_{{3}}r_{{1}}-b_{{4}}r_{{2}}-2\,p_{{
		1}}r_{{2}}+2\,p_{{2}}r_{{1}}=0,\\
-b_{{3}}x_{{1}}+b_{{4}}y_{{1}}+p_{{1}}y_{{1}}-p_{{2}}x_{{1}}+r_{{1}}y_
{{2}}-r_{{2}}x_{{2}}=0,\\
-b_{{3}}y_{{1}}-b_{{4}}x_{{1}}+p_{{1}}x_{{1}}+p_{{2}}y_{{1}}+r_{{1}}x_
{{2}}+r_{{2}}y_{{2}}=0,\\
-a_{{1}}b_{{1}}-2\,b_{{1}}x_{{2}}-b_{{3}}q_{{1}}-b_{{4}}q_{{2}}+2\,p_{
	{1}}q_{{2}}-2\,p_{{2}}q_{{1}}=0,\\
-a_{{2}}b_{{1}}-2\,b_{{2}}x_{{2}}+b_{{3}}p_{{1}}+b_{{4}}p_{{2}}-q_{{1}
}r_{{2}}+q_{{2}}r_{{1}}=0,\\
-b_{{3}}x_{{2}}+b_{{4}}y_{{2}}-p_{{1}}y_{{2}}+p_{{2}}x_{{2}}+q_{{1}}y_
{{1}}-q_{{2}}x_{{1}}=0,\\
-b_{{3}}y_{{2}}-b_{{4}}x_{{2}}-p_{{1}}x_{{2}}-p_{{2}}y_{{2}}+q_{{1}}x_
{{1}}+q_{{2}}y_{{1}}=0.
\end{cases} \end{equation}

		\end{enumerate}
	
	\end{pr}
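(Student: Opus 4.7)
\medskip

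\noindent\textbf{Proof plan.}

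The preceding discussion has in fact already established most of what is needed; the proposition is the packaging of that analysis into a normal form, and my plan is simply to collect the ingredients and then verify the Jacobi identity. More precisely, I would proceed in three stages.

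\emph{Stage 1: the normal form of $(J,R,\si)$ and the basis.} The paragraphs preceding the statement show, purely from $(C0)$, that $\G=\h\oplus\p$ with $\h=\mathrm{Im}\,R$, $\p=\ker\si$, both $J$-invariant of dimension $2$, with $J_{|\h}=\la\,\mathrm{Id}_\h$ for some $\la\in\R$, $J_{|\p}^2=-\mathrm{Id}_\p$, $\si_{|\p}=0$, and $\si_1=\si_{|\h}=-(1+\la^2)R_1^{-1}$. I pick an arbitrary basis $(e_1,e_2)$ of $\h$, pick any nonzero $e_3\in\p$, and set $e_4=Je_3$; then $(e_3,e_4)$ is a basis of $\p$ because $J_{|\p}$ has no real eigenvectors. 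Writing the symplectic form $\om$ on $\h$ in the basis $(e_1,e_2)$ as $\om=a^{-1}e^{12}$ for some $a\neq 0$, the relations $R=\om^{-1}$ (on $\p^0$, extended by $0$) and $\si_{|\h}=-(1+\la^2)\om$ give at once the matrices announced in item 1.

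\emph{Stage 2: the shape of the Lie brackets.} Condition $(C1)$, i.e.\ $[R,R]=0$, is equivalent to $\h$ being a subalgebra, which forces $[e_1,e_2]=a_1e_1+a_2e_2$. Writing $[u,v]=\phi_\h(u)(v)-\phi_\p(v)(u)$ for $u\in\h,v\in\p$, the analysis of $(C2)$ done just above the statement shows two things: first, $\phi_\h(u)\in\mathrm{End}(\p)$ commutes with $J_{|\p}$, so in the basis $(e_3,e_4)$ it is represented by a $(2\times 2)$ matrix of the form $\bigl(\begin{smallmatrix} x & y\\-y & x\end{smallmatrix}\bigr)$, giving the coefficients $(x_i,y_i)$; second, $\tr\phi_\p(v)=0$ for $v\in\p$, which forces the $2\times 2$ matrix of $\phi_\p(v)$ on $\h$ to be traceless, giving the coefficients $(p_i,q_i,r_i)$. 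The bracket $[e_3,e_4]$ is free, producing the constants $b_1,b_2,b_3,b_4$. This yields exactly the list of brackets in item 2. (Conditions $(C3)$ and $(C4)$ were already verified to hold automatically modulo the above constraints, so they impose no further restrictions at this stage.)

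\emph{Stage 3: the Jacobi identity.} Once the brackets have this normal form, the Jacobi identity $J(u,v,w):=[[u,v],w]+[[v,w],u]+[[w,u],v]=0$ needs to be checked on the four triples $(e_1,e_2,e_3)$, $(e_1,e_2,e_4)$, $(e_1,e_3,e_4)$, $(e_2,e_3,e_4)$, each producing four scalar equations (one per basis vector), for a total of $16$ equations. I would expand each $J(e_i,e_j,e_k)$ in the basis $(e_1,e_2,e_3,e_4)$ and collect coefficients; many of these equations coincide or are proportional, and the independent survivors are precisely the $14$ polynomial relations listed in system \eqref{S}. This step is purely mechanical but is the main bookkeeping obstacle in the proof; I would carry it out with the help of a symbolic algebra system as the authors mention. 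Conversely, any quadruple $(J,R,\si)$ of the form in items 1--2 on the vector space $\G=\R^4$ satisfies $(C0)$--$(C4)$ by construction once $(S)$ holds, which establishes the stated characterization.
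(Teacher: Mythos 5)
Your proposal is correct and follows essentially the same route as the paper: the paper's proof of this proposition is precisely the discussion preceding its statement, namely extracting the splitting $\G=\h\oplus\p$ and the normal form of $(J,R,\si)$ from $(C0)$, reading off the bracket constraints from $(C1)$--$(C2)$ (with $(C3)$--$(C4)$ checked to be automatic), and then expanding the Jacobi identity to obtain the system \eqref{S}. Your three stages match this organization, so no further comment is needed.
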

	
	\begin{remark}\label{rem1} The Lie algebra $\G$ in the last proposition is unimodular if and only if 
		\begin{equation}\label{uni} a_{{2}}+2\,x_{{1}}=a_{{1}}-2\,x_{{2}}=b_{{4}}=b_{{3}}=0.
		\end{equation} Moreover, the matrix in the basis $(e_1,e_2)$ of the restriction of the Killing form ${Q}$ of $\G$ to $\h$ is given by
		\begin{equation}\label{K} Q_\h=\left( \begin {array}{cc} {a_{{2}}}^{2}+2\,{x_{{1}}}^{2}-2\,{y_{{1}}}
		^{2}&-a_{{1}}a_{{2}}+2\,x_{{1}}x_{{2}}-2\,y_{{1}}y_{{2}}
		\\ \noalign{\medskip}-a_{{1}}a_{{2}}+2\,x_{{1}}x_{{2}}-2\,y_{{1}}y_{{2
			}}&{a_{{1}}}^{2}+2\,{x_{{2}}}^{2}-2\,{y_{{2}}}^{2}\end {array}
			\right) .
			 \end{equation}This formula will play a key role in the proof of Theorem \ref{main3}.
		
		\end{remark}
		
Let us give now the pure spinor associated to a generalized complex structure $(\G,J,R,\si)$ of type 1 on a four dimensional Lie algebra. Consider $\mathrm{L}$  the maximal isotropic subspace of $\Phi(\G)\otimes\mathbb{C}$ associated to the generalized complex structure   and given by
\[ \mathrm{L}=\left\{ X+\xi-i(JX+R(\xi)+\si(X)-J^*\xi),X\in\G,\xi\in\G^*     \right\}. \] According to \cite{Ga}, there exists a spinor $\rho\in\Om^\bullet(\G)\otimes\mathbb{C}$ such that
\[\mathrm{L}=\mathrm{Ann}(\rho)=\left\{ u\in\Phi(\G)\otimes\mathbb{C}, u.\rho=0    \right\} \]	where $(X+\xi).\rho=i_X\rho+\xi\wedge\rho$.

\begin{pr}\label{spinor} Let $(\G,J,R,\si)$ be a generalized complex structure of type 1 on a four dimensional Lie algebra. Then the associated pure spinor is given by
\[ \rho=\exp({(i-\la)\om})(\theta+iJ^*\theta)=(\theta+iJ^*\theta)+(i-\la)\om\wedge(\theta+iJ^*\theta), \]where $\theta\in\h^0\setminus\{0\}$ and $\om\in\wedge^2\G^*$ is given by
\[ \om(R(\xi_1),R(\xi_2))=\prec \xi_1,R(\xi_2)\succ,\; i_X\om=0\quad\mbox{for}\quad X\in\ker\si\esp J_{|\h}=\la\;\mathrm{Id}_\h. \]
\end{pr}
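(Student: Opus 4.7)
The plan is to verify directly that the given $\rho$ has annihilator $\mathrm{L}$, which by the general theory of pure spinors characterizes $\rho$ up to a nonzero complex scalar as the pure spinor associated to $(J,R,\si)$. Since both $\mathrm{L}$ and $\mathrm{Ann}(\rho)$ are known to be maximal isotropic of complex dimension four, it will suffice to show $\mathrm{L}\subseteq\mathrm{Ann}(\rho)$.

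First, I would fix an adapted basis $(e_1,e_2,e_3,e_4)$ of $\G$ as in Proposition \ref{step1}, so that $J$, $R$, $\si$ take the normal forms listed there, with $\h=\spa\{e_1,e_2\}$, $\p=\spa\{e_3,e_4\}$, $\h^0=\spa\{e^3,e^4\}$ and $\p^0=\spa\{e^1,e^2\}$. A short computation shows $J^*e^3=-e^4$ and $J^*e^4=e^3$, whence any nonzero $\theta\in\h^0$ gives $\theta+iJ^*\theta\in\mathbb{C}(e^3-ie^4)$. The two conditions $i_X\om=0$ for $X\in\p$ and $\om(R(\xi_1),R(\xi_2))=\prec\xi_1,R(\xi_2)\succ$ force $\om$ to be a nonzero scalar multiple of $e^{12}$; in particular $\om\wedge\om=0$, so the exponential $\exp\bigl((i-\la)\om\bigr)$ truncates to $1+(i-\la)\om$ and $\rho$ lies in $\wedge^1\G^*\oplus\wedge^3\G^*$, as expected for a type-$1$ pure spinor.

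The bulk of the work is then to check the Clifford annihilation $(X+\xi)\cdot\rho=i_X\rho+\xi\wedge\rho=0$ for the eight generators $e_j-iKe_j$ and $e^j-iKe^j$ ($j=1,\ldots,4$) of $\mathrm{L}$. These split into four natural cases. For $e_3,e_4\in\p$, one has $Ke_j=Je_j\in\p$ since $\si$ vanishes on $\p$, and the annihilation reduces to the elementary identity $(e_3-ie_4)\cdot(e^3-ie^4)=0$, propagated through the wedge with $\om$. For $e^3,e^4\in\h^0$, one has $Ke^j=-J^*e^j\in\h^0$ since $R$ vanishes on $\h^0$, and again the check reduces to the fact that $e^3-ie^4$ is an eigenform of $J^*$. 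For $e^1,e^2\in\p^0$, one has $Ke^j=R(e^j)-J^*e^j\in\h\oplus\h^0$; the contribution of $R(e^j)$ enters through $i_{R(e^j)}$ acting on the $\om$-factor and must cancel against the wedge with $-J^*e^j$, and this cancellation is precisely the content of the defining relation $\om(R\xi_1,R\xi_2)=\prec\xi_1,R\xi_2\succ$. For $e_1,e_2\in\h$, one has $Ke_j=\la e_j+\si(e_j)\in\h\oplus\p^0$; the contraction $i_{e_j}\rho$ produces from the $\om$-factor an expression proportional to $\si(e_j)\wedge(\theta+iJ^*\theta)$, and the precise cancellation with the wedge term forces the scalar $(i-\la)$ through the algebraic identity $(1-i\la)(i-\la)=i(1+\la^2)$ combined with the normalisation $\si_1=-(1+\la^2)R_1^{-1}$ established in the discussion preceding Proposition \ref{step1}.

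The main obstacle is this last $\h$-case: it is the step that pins down the exact scalar $(i-\la)$ in front of $\om$, exhibiting the real part $-\la\om$ as a $B$-field and the imaginary part $i\om$ as the symplectic piece on $\h$. Once it is verified, all eight generators annihilate $\rho$, so $\mathrm{L}\subseteq\mathrm{Ann}(\rho)$, and the dimension count (both are maximal isotropic of complex dimension $4$, and $\rho\neq 0$) forces equality. Hence $\rho$ is the pure spinor of $(J,R,\si)$.
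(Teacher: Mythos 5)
Your strategy is exactly the paper's: exhibit $\mathrm{L}\subseteq\mathrm{Ann}(\rho)$ by testing generators adapted to the splitting $\G=\h\oplus\p$ of Proposition \ref{step1}, then conclude by maximality of $\mathrm{L}$. The four cases you isolate ($\p$, $\h^0$, $\p^0$, $\h$) are precisely the four checks in the paper's proof, and the identities you invoke ($i_{R(\xi)}\om=\xi$, $\si\circ R=-(1+\la^2)\mathrm{Id}$ on $\p^0$, the fact that $\theta+iJ^*\theta$ is an eigenform of $J^*$) are the right ones.

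There is, however, one step that does not go through as you assert it, namely the cancellations in the $\p^0$- and $\h$-cases. Write $\rho=\eta+c\,\om\wedge\eta$ with $\eta=\theta+iJ^*\theta$. For $\xi\in\p^0$ the generator of $\mathrm{L}$ is $(1+i\la)\xi-iR(\xi)$ and, using $i_{R(\xi)}\om=\xi$ and $i_{R(\xi)}\eta=0$, its action on $\rho$ is $\bigl[(1+i\la)-ic\bigr]\,\xi\wedge\eta$; for $X=R(\xi)\in\h$ the generator is $(1-i\la)R(\xi)+i(1+\la^2)\xi$ and its action is $\bigl[(1-i\la)c+i(1+\la^2)\bigr]\,\xi\wedge\eta$. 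Both vanish precisely for $c=\la-i$, not $c=i-\la$: your identity $(1-i\la)(i-\la)=i(1+\la^2)$ shows that with $c=i-\la$ the two contributions in the $\h$-case are \emph{equal}, so they double rather than cancel. Hence either the sign of $\om$ or the sign of the exponent must be reversed. This is not your slip alone: the paper's own proof writes $-i(\la-i)\,i_{R(\xi)}\om\wedge\eta$ where the contraction of the degree-three part of $\rho$ actually yields $-i(i-\la)\,i_{R(\xi)}\om\wedge\eta$, and the spinors recorded in Tables \ref{8} and \ref{9} (e.g. $\rho=f^3+if^4+f^{124}-if^{123}$ for $A_{3,1}\oplus A_1$) correspond to $\exp((\la-i)\om)\eta$. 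Your degree-one part is correct (indeed $J^*e^3=-e^4$ forces $\theta+iJ^*\theta\in\mathbb{C}(e^3-ie^4)$, already fixing a sign that the proof of Proposition \ref{cy} gets wrong), so once you carry out the $\h$-case honestly the sign of the degree-three part will surface; with $c=\la-i$ all eight generators annihilate $\rho$ and your maximality argument finishes the proof.
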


\begin{proof} By using the splitting $\G=\h\oplus\p$ with $\p=\ker\si$, we will check that for any $u\in \mathrm{L}$, $u.\rho=0$. This shows that $\mathrm{L}\subset\mathrm{Ann}(\rho)$ and since $\mathrm{L}$ is maximal we have the equality. Note first that for any $X\in\G$,
	\[ i_{X-iJX}(\theta+iJ^*\theta)=0\esp \si(X)\wedge\om=0. \]
	This with the fact that $i_X\om=0$ for $X\in\p$ implies that, for any $X\in\p$,
	\[ [X-i(JX+\si(X))].\rho=0. \]
	Now take $X=R(\xi)\in\h$ with $\xi\in\p^0$. Then
	\[ [X-i(JX+\si(X))].\rho=-i\si(X)\wedge(\theta+iJ^*\theta)+(\la-i)i_{X-iJX}\om\wedge (\theta+iJ^*\theta).  \]
	But
	\[ \si(X)=\si\circ R(\xi)=-\xi-(J^*)^2\xi=-(1+\la^2)\xi\esp i_{X-iJX}\om=(1-i\la)i_{R(\xi)}\om=(1-i\la)\xi \]
	and hence $[X-i(JX+\si(X))].\rho=0$. On the other hand, for any $\xi\in\h^0$, we have $$(\xi+iJ^*\xi)\wedge(\theta+iJ^*\theta)=0.$$ One can check this by taking $\xi=\theta$ or $\xi=J^*\theta$. We deduce that $(\xi+iJ^*\xi).\rho=0$.
	
	Let $\xi\in\p^0$. We have $i_{R(\xi)}(\theta+iJ^*\theta)=0$,  $\xi\wedge\om=0$ and $J^*\xi=\la\xi$ hence
	\[ \xi-i(R(\xi)-J^*\xi).\rho=(1+i\la)\xi\wedge(\theta+iJ^*\theta)-i(\la-i)i_{R(\xi)}\om\wedge (\theta+iJ^*\theta)=0\]since $i_{R(\xi)}\om=\xi$. This completes the proof.
	\end{proof}

By using Proposition \ref{step1}, we will prove the following result.
\begin{pr}\label{cy}Let $(\G,J,R,\si)$ be a generalized complex structure of type 1 on a four dimensional Lie algebra. Then it is Calabi-Yau, i.e., $\mathrm{d}\rho=0$ if and only if $[\G,\G]\subset\mathrm{Im\;}R$.
	
	\end{pr}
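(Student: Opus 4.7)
The plan is to compute $\mathrm{d}\rho$ directly in the adapted basis $(e_1,e_2,e_3,e_4)$ of Proposition \ref{step1} and then read off when the result vanishes. Since $\theta\in\h^0\setminus\{0\}$ is determined only up to a nonzero scalar, I would normalise $\theta=e^3$; using $Je_4=-e_3$ this gives $J^*\theta=-e^4$ and $\theta_+:=\theta+iJ^*\theta=e^3-ie^4$. From $R=ae_{12}^\#$ and the two defining conditions of $\om$ in Proposition \ref{spinor} (namely $\om(R\xi_1,R\xi_2)=\prec\xi_1,R\xi_2\succ$ and $i_X\om=0$ on $\p=\ker\si$), a short direct calculation gives $\om=-\tfrac{1}{a}e^{12}$. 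In particular $\om$ has rank $2$ in a four-dimensional space, so $\om\wedge\om=0$, the exponential truncates, and $\rho=\theta_++(i-\la)\,\om\wedge\theta_+$.

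Applying the Chevalley--Eilenberg differential and Leibniz (using $|\om|=2$) yields
\[ \mathrm{d}\rho=\mathrm{d}\theta_++(i-\la)\bigl(\mathrm{d}\om\wedge\theta_++\om\wedge\mathrm{d}\theta_+\bigr). \]
Reading the $\mathrm{d}e^k$ off the bracket table of Proposition \ref{step1}, the crucial observation is that $\mathrm{d}\om=-\tfrac{1}{a}\mathrm{d}(e^{12})$ is a linear combination of $e^{134}$ and $e^{234}$ alone, so $\mathrm{d}\om\wedge\theta_+=0$ for free. Introducing $\ga_j:=x_j+iy_j$ for $j=1,2$ and $\beta:=b_3+ib_4$, a short expansion of $\mathrm{d}e^3$ and $\mathrm{d}e^4$ produces
\[ \mathrm{d}\theta_+=-(\ga_1 e^1+\ga_2 e^2)\wedge\theta_+-\beta\,e^{34}, \]
and then $\om\wedge\mathrm{d}\theta_+=\tfrac{\beta}{a}e^{1234}$. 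The three resulting contributions to $\mathrm{d}\rho$ live in linearly independent pieces of $\wedge^\bullet\G^*\otimes\mathbb{C}$, so $\mathrm{d}\rho=0$ is equivalent to $\ga_1=\ga_2=\beta=0$, i.e.\ to $x_1=y_1=x_2=y_2=b_3=b_4=0$.

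The final step is to recognise this system as the geometric condition $[\G,\G]\subset\h=\mathrm{Im\;}R$. Indeed, from the bracket table in Proposition \ref{step1}, $[e_1,e_2]=a_1 e_1+a_2 e_2\in\h$ for every choice of parameters, while the $\p$-components of $[e_i,e_j]$ for $(i,j)\in\{(1,3),(1,4),(2,3),(2,4),(3,4)\}$ are controlled precisely by the six scalars $x_1,y_1,x_2,y_2,b_3,b_4$. Their simultaneous vanishing is therefore equivalent to $[\G,\G]\subset\h$, completing the proof. The main obstacle is purely computational --- keeping the signs in the $\mathrm{d}e^k$ and in the Leibniz rule straight; conceptually the argument is clean because of the double miracle that $\om\wedge\om=0$ and that $\mathrm{d}\om$ already annihilates $\theta_+$, so the closedness of $\rho$ is entirely governed by $\mathrm{d}\theta_+$.
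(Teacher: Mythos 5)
Your proof is correct and follows essentially the same route as the paper: expand $\rho$ in the adapted basis of Proposition \ref{step1}, apply $\mathrm{d}$ using the structure constants, and observe that the vanishing of $\mathrm{d}\rho$ is equivalent to $x_1=y_1=x_2=y_2=b_3=b_4=0$, which is exactly the vanishing of the $\p$-components of all brackets. (The only quibble is the sign of the imaginary part of your $\beta$ — with $\theta_+=e^3-ie^4$ one gets $b_3-ib_4$ rather than $b_3+ib_4$ — but this conjugate-convention slip does not affect the conclusion.)
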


	\begin{proof} Choose a basis $(e_1,e_2)$ of $\mathrm{Im\;}R$ and $(e_3,e_4=Je_3)$ a basis of $\ker\si$. Then
		\[ \rho=e^3+ie^4+(i-\la) e^{12}\wedge(e^3+ie^4). \]
		So
		\[ \rho=e^3-\la e^{123}-e^{124}+i( e^{4}+e^{123}
		-\la e^{124}). \]
		 The relation $\mathrm{d}\rho=0$
		is equivalent to
		\begin{equation*} \begin{cases}
		\mathrm{d}e^3=\mathrm{d}e^4=0,\\
		\mathrm{d}(e^{12}\wedge (\la e^3+e^4))=0,\\
		\mathrm{d}(e^{12}\wedge (e^3-\la e^4))=0.
		\end{cases}\label{laska} \end{equation*}But, from Proposition \ref{step1},
		\[ \begin{cases}\mathrm{d}e^3=-b_3e^{34}-x_1e^{13}-y_1e^{14}-x_2e^{23} -y_2e^{24} ,\\
		\mathrm{d}e^4=-b_4e^{34}+y_1e^{13}-x_1e^{14}+y_2e^{23}-x_2e^{24},\\
		\mathrm{d}e^1=-a_1e^{12}-b_1e^{34}+p_1e^{13}+p_2e^{14}+q_1e^{23}+q_2e^{24},\\
		\mathrm{d}e^2=-a_2e^{12}-b_2e^{34}+r_1e^{13}+r_2e^{14}-p_1e^{23}-p_2e^{24}.
		\end{cases} \]On the other hand,
		\begin{eqnarray*}
			\mathrm{d}(e^{12})&=&-b_1e^{234}
			+b_2e^{134},\;
			\mathrm{d}(e^{123})=-b_3e^{1234},\;
			\mathrm{d}(e^{124})=-b_4e^{1234}.
		\end{eqnarray*}In conclusion, $\mathrm{d}\rho=0$ if and only if $b_3=b_4=x_1=x_2=y_1=y_2=0$ which completes the proof.
		\end{proof}

The next step is to solve the system \eqref{S}. We distinguish three cases: $(i)$ $\G$ is unimodular, $(ii)$ $\h$ is non abelian and $\G$ is nonunimodular and $(iii)$ $\h$ is  abelian and $\G$ is nonunimodular.

\begin{theo}\label{main1} Let $(\g,J,R,\sigma)$ be a generalized complex structure of type 1 on  a four dimensional unimodular  Lie algebra.   Then there exists a basis $(e_1,e_2,e_3,e_4)$ of $\g$ such that
	\[ J=\la(E_{11}+E_{22})+E_{34}-E_{43} ,\; R= e_{12}^\#\esp\si=(1+\la^2)e^{12}_\#, \]
	and the non vanishing Lie brackets have one of the following forms:
	\begin{enumerate}
		\item[$\U_1:$]    $			[e_1,e_2]=e_1,	
		[e_2,e_3]=\frac12e_3-ye_4-q_1e_1,\;[e_2,e_4]=ye_3+\frac12e_4-q_2e_1.$
		
		\item[$\U_2:$] 	$[e_3,e_4]=b_1e_1+b_2e_2,
		[e_2,e_3]=- ye_4-q_1e_1,\;[e_2,e_4]= ye_3-q_2e_1.$

		\item[$\U_3:$]  $[e_3,e_4]=b_1e_1+b_2e_2,
		\;[e_4,e_1]=pe_1+re_2,
		\;[e_4,e_2]=qe_1-pe_2,	$ with $|p^2+qr|\in\{0,1\}.$ 	
	\end{enumerate}

\end{theo}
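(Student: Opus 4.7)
The plan is to start from Proposition~\ref{step1}, impose unimodularity (Remark~\ref{rem1}) to set $b_3=b_4=0$, $x_1=-a_2/2$, $x_2=a_1/2$, rescale $e_1\mapsto a^{-1}e_1$ to fix $a=1$, and then use only basis changes of $\G$ that preserve the standard form of $(J,R,\sigma)$ to reduce the remaining Jacobi system~\eqref{S} to one of the three listed shapes. No B-transformation is needed for this theorem. The allowed basis changes are generated by $SL(2,\R)$ acting on $\h=\mathrm{Im\;}R$ (arbitrary matrices commute with $J_{|\h}=\la\,\mathrm{Id}$, and determinant one preserves $R=e_{12}^\#$), by $J_{|\p}$-compatible substitutions on $\p=\ker\si$, and by uniform real scalings of $(e_3,e_4)$.

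I separate the non-abelian case $(a_1,a_2)\neq(0,0)$ from the abelian case $a_1=a_2=0$. In the non-abelian case, transitivity of $SL(2,\R)$ on $\R^2\setminus\{0\}$ normalizes $(a_1,a_2)=(1,0)$, whence $x_1=0$ and $x_2=1/2$. The second row of~\eqref{S} then yields $y_1=0$; the four equations involving $p_1,p_2,r_1,r_2$ form a linear system with non-zero coefficient determinant, forcing $p_1=p_2=r_1=r_2=0$; the two equations involving $b_1$ and $b_2$ kill both. Only $y=y_2$, $q_1$, $q_2$ remain free, which is exactly $\U_1$.

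In the abelian case $x_1=x_2=0$, the system~\eqref{S} reduces to four equations coupling $(p_1,p_2,q_1,q_2,r_1,r_2)$ with $(y_1,y_2)$, together with the three proportionality relations $p_1q_2=p_2q_1$, $p_1r_2=p_2r_1$, $q_1r_2=q_2r_1$ asserting that the trace-zero matrices $M_3=\ad_{e_3}|_\h$ and $M_4=\ad_{e_4}|_\h$ are $\R$-proportional. If $(y_1,y_2)\neq(0,0)$, the $SL(2,\R)$ action on $\h$ normalizes $(y_1,y_2)=(0,y)$ with $y\neq0$; the four $y$-equations then kill $p_1,p_2,r_1,r_2$, producing $\U_2$. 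If $(y_1,y_2)=(0,0)$, the proportionality $M_3=\mu M_4$ (or the symmetric case, handled by the $J$-preserving swap $(e_3,e_4)\mapsto(e_4,-e_3)$) permits the $J$-compatible substitution $\tilde e_3=e_3-\mu e_4$, $\tilde e_4=J\tilde e_3=\mu e_3+e_4$, after which $M_3$ vanishes and $M_4$ records the entries $p=p_2$, $q=q_2$, $r=r_2$. A uniform real scaling $(e_3,e_4)\mapsto(ce_3,ce_4)$ multiplies $p^2+qr$ by $c^2$, so $|p^2+qr|$ can be normalized to lie in $\{0,1\}$; this is $\U_3$.

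The principal delicate point I expect is justifying, in the abelian branch, that the combination of the $J$-compatible substitution in $\p$ and the uniform real scaling exhausts the basis-change freedom compatible with $(J,R,\sigma)$ on $\p$, so that $|p^2+qr|\in\{0,1\}$ genuinely classifies the $\U_3$-families; this amounts to identifying the stabilizer of $(J_{|\p},R,\sigma)$ inside $GL(\p)$ with the group of real-scalar multiples of $\mathbb{C}^\times$ acting by conjugation on the trace-zero part of $\mathrm{End}(\h)$.
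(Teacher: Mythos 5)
Your proposal is correct and takes essentially the same route as the paper: Proposition \ref{step1} together with the unimodularity conditions \eqref{uni}, a case split on whether $\h=\mathrm{Im\;}R$ is abelian, and a direct resolution of the Jacobi system \eqref{S} using determinant-one changes of basis in $\h$ and $J$-commuting changes of basis in $\p=\ker\si$; your normalization of $(y_1,y_2)$ to $(0,y)$ before solving and your matrix-proportionality formulation of the $\U_3$ branch are only a modest streamlining of the paper's explicit case-by-case parametrizations. Note that your closing concern about identifying the full stabilizer of $(J,R,\si)$ inside $GL(\p)$ is not actually needed here, since the theorem only asserts the existence of some basis realizing one of the stated forms (the classification up to equivalence is deferred to Theorem \ref{type1}).
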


\begin{proof} We are in the situation of Proposition \ref{step1}. 
	We distinguish two cases: $\h$ is non abelian and $\h$ is abelian.

	Suppose that $\h$ is non abelian. We can choose $(e_1,e_2)$ such that $a_1=1$ and $a_2=0$. Moreover, since $\G$ is unimodular, according to \eqref{uni},
	 $x_1=0=b_3=b_4=0$, $x_2=\frac12$. So, the system \eqref{S} is equivalent to
	\begin{equation*} \begin{cases}y_1=0,\\
	2p_{{2}}y_{{2}}+\,p_{{1}}=0,\\
	2r_{{2}}y_{{2}}+\,r_{{1}}=0,\\
	-2p_{{1}}y_{{2}}+\,p_{{2}}=0,\\
	-2r_{{1}}y_{{2}}+r_{{2}}=0,
	\end{cases}\esp \begin{cases}
	q_{{1}}r_{{2}}-q_{{2}}r_{{1}}+b_{{2}}=0,\\
	-\,p_{{1}}r_{{2}}+\,p_{{2}}r_{{1}}=0,\\
	\,p_{{1}}q_{{2}}-\,p_{{2}}q_{{1}}-\,b_{{1}}=0,\\
	-q_{{1}}r_{{2}}+q_{{2}}r_{{1}}-b_{{2}}=0.
	\end{cases} \end{equation*}
	This is equivalent to $y_1=p_1=p_2=r_1=r_2=b_1=b_2=0$ and $\g$ is isomorphic to $\U_1$.
	
 Suppose now that $\h$ is abelian.	
		Then $b_3=b_4=x_1=x_2=0$. In this case, the system \eqref{S} is equivalent to
	\begin{equation}\label{ju} \begin{cases}
	p_2y_2-q_2y_1= p_2y_1+r_2y_2=0,\\q_1y_1 -p_1y_2=r_1y_2 +p_1y_1=0,\\          q_1r_2-q_2r_1=r_2p_1-p_2r_1=  q_2p_1-p_2q_1= 0.  
	\end{cases} \end{equation}

We distinguish two cases: 
\begin{itemize}\item[$(i)$] $(y_1,y_2)=0$. Then
	\[ q_1r_2-q_2r_1=r_2p_1-p_2r_1=  q_2p_1-p_2q_1= 0. \]
We have three cases:

$\bullet$ $(q_1,q_2)=(p_1,p_2)=0$. In this case, $\g$ is isomorphic to $\U_2$.

$\bullet$ $(q_1,q_2)\not=(0,0)$ and hence $p_1=pq_1$, $p_2=pq_2$, $r_1=rq_1$ and $r_2=rq_2$.

\[ \begin{cases}
[e_1,e_2]=0,\;[e_3,e_4]=b_1e_1+b_2e_2,\\
[e_1,e_3]=-pq_1e_1-rq_1e_2,\;[e_1,e_4]=-pq_2e_1-rq_2e_2,\\
[e_2,e_3]=-q_1e_1+pq_1e_2,\;[e_2,e_4]=-q_2e_1+pq_2e_2.
\end{cases}
\]Put $f_3=q_2e_3-q_1e_4$ and $f_4=Jf_3=q_1e_3+q_2e_4$. Then
\[ [e_1,f_3]=[e_2,f_3]=0, [e_1,f_4]=-(q_1^2+q_2^2)(pe_1+re_2)\esp [e_2,f_4]=-(q_1^2+q_2^2)(e_1-pe_2).  \]
If the determinant $\de$ of $([e_1,f_4],[e_2,f_4])$ in the basis $(e_1,e_2)$ is non zero then we can replace $(f_3,f_4)$ with $\frac{1}{\sqrt{\de}}(f_3,f_4)$ and we get that
the Lie algebra is  isomorphic to $\U_3$.

$\bullet$ $(p_1,p_2)\not=(0,0)$ and hence $q_1=pp_1$, $q_2=pp_2$, $r_1=rp_1$ and $r_2=rp_2$. This case is similar to the last case.

\item[$(ii)$] {  $(y_1,y_2)\not=0$}. Without loss of generality we can suppose $y_1\not=0$. From \eqref{ju}, we deduce that
\[ (p_1,q_1)=(my_1,my_2),(p_1,r_1)=(ny_2,-ny_1),(p_2,q_2)=(sy_1,sy_2)\esp (p_2,r_2)=(ty_2,-ty_1). \]
	So $m=\frac{ny_2}{y_1}$ and $s=\frac{ty_2}{y_1}$ and one can check that the last equation in \eqref{ju} are satisfied. So
	\[ \begin{cases}
	[e_1,e_2]=0,\;[e_3,e_4]=b_1e_1+b_2e_2,\\
	[e_1,e_3]=-y_1e_4-ny_2e_1+ny_1e_2,\;[e_1,e_4]=y_1e_3-ty_2e_1+ty_1e_2,\\
	[e_2,e_3]=-y_2e_4-\frac{ny_2^2}{y_1}e_1+\frac{ny_2^2}{y_1}e_2,\;
	[e_2,e_4]=y_2e_3-\frac{ty_2^2}{y_1}e_1+\frac{ty_2^2}{y_1}e_2.
	\end{cases}
	\]
	
	If we put $f_2=y_1e_2-y_2e_1$, we get
	\[ [f_2,e_3]=[f_2,e_4]=0, [e_1,e_3]=-y_1e_4+nf_2\esp [e_1,e_4]=y_1e_3+tf_2 \]
	and the Lie algebra is isomorphic to $\U_2$.
	\end{itemize}To conclude, one can make a change of basis of $\h$ without changing the general form of the Lie brackets in order to get $R=e_{12}^\#$.
\end{proof}

In Table \ref{10},  for each class of Lie algebras $\U_i$, $i=1,2,3$ obtained in Theorem \ref{main1}, we build a family of isomorphisms (depending on the values of the parameters) from $\U_i$ onto  an unimodular four dimensional Lie algebra in  Table \ref{2}.

 Let us pursue our study of generalized complex structures of type 1.
\begin{theo}\label{main2}Let $(\g,J,R,\sigma)$ be a generalized complex structure of type 1 on  a four dimensional nonunimodular  Lie algebra where $\mathrm{Im\;}R$  is non abelian.
	  Then there exists a basis $(e_1,e_2,e_3,e_4)$ of $\g$ such that
	\[ J=\la(E_{11}+E_{22})+E_{34}-E_{43} ,\; R= e_{12}^\#\esp\si=(1+\la^2)e^{12}_\#, \]
	and the non vanishing Lie brackets have one of the following forms:
	\begin{enumerate}
		\item[$\B_{1}:$] $[e_1,e_2]=e_1,\; [e_1,e_4]=-e_1,\;{[e_2,e_3]}=q_1e_1+e_3,\; [e_2,e_4]=q_2e_1+e_2+e_4,\;{[e_3,e_4]}=q_1e_1+e_3  $
		
		\item[$\B_{2}:$]   $ [e_1,e_2]=e_1,\;
		{[e_2,e_3]=q_1e_1},\; [e_2,e_4]=q_2e_1,\;
		{[e_3,e_4]}=q_1e_1+e_3 $

		\item[$\B_{3}:$]   $[e_1,e_2]=e_1,\; [e_2,e_3]=-q_1e_1+xe_3-ye_4,\; [e_2,e_4]=-q_2e_1+ye_3+xe_4\quad x\not=\frac{1}{2} $
		
		\item[$\B_{4}:$] $ [e_1,e_2]=e_1,\;
		{[e_2,e_3]}=-q_1e_1-\frac12e_3-ye_4,\; [e_2,e_4]=-q_2e_1+ye_3-\frac12e_4,\; {[e_3,e_4]}=e_1,\;
		$
	\end{enumerate}

\end{theo}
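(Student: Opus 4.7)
} The strategy is parallel to that of Theorem \ref{main1}. By Proposition \ref{step1}, the generalized complex structure is captured by a basis $(e_1,e_2,e_3,e_4)$ with $J,R,\sigma$ in standard form and the structure constants $a_i,b_i,x_i,y_i,p_i,q_i,r_i$ satisfying the system \eqref{S}. Since $\h$ is non abelian, $(a_1,a_2)\neq (0,0)$; applying an automorphism transformation $\phi(A)$ with $A$ acting on $\h$ and trivially on $\p$, followed by a rescaling of $R$ (which only changes $a$), we may assume $a_1=1,\;a_2=0$. From this, the first two equations of \eqref{S} immediately give $x_1=y_1=0$.

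I would then exploit four independent $2\times 2$ linear systems hidden inside \eqref{S}. Equations $(3),(5)$ form the system
\begin{equation*}
\begin{pmatrix}1-x_2 & y_2\\ -y_2 & 1-x_2\end{pmatrix}\begin{pmatrix}p_1\\p_2\end{pmatrix}=0,
\end{equation*}
and equations $(4),(6)$ the same system for $(r_1,r_2)$; their determinant is $(1-x_2)^2+y_2^2$. Equations $(9),(10)$ give a system for $(r_1,r_2)$ with determinant $x_2^2+y_2^2$, and equations $(13),(14)$ give one for $(b_3,b_4)$ whose determinant is again $x_2^2+y_2^2$. Consequently, up to replacing the appropriate tuple by zero, one of the following exclusive regimes occurs: either $(x_2,y_2)=(1,0)$, or $p_1=p_2=r_1=r_2=0$ (and then either $(x_2,y_2)=(0,0)$ or $b_3=b_4=0$). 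Adding $(7)$ and $(12)$ also yields $b_2(1-2x_2)=0$. Together with the unimodularity failure, which forbids the simultaneous vanishing $x_2=\tfrac12,b_3=b_4=0$, this cleanly splits the analysis into four branches corresponding to $\B_1,\B_2,\B_3,\B_4$.

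The branches would be treated as follows. When $(x_2,y_2)=(1,0)$, equations $(9),(10)$ force $r_1=r_2=0$ and the leftover relations describe a family parametrized by $(p_1,p_2,q_1,q_2,b_3,b_4)$ subject to equations $(7),(8),(11)$; a suitable $B$-transformation together with an automorphism preserving $\h$ can be used to absorb $p_1,p_2,b_3,b_4$, yielding $\B_1$. When $p_1=p_2=r_1=r_2=0$ and $(b_3,b_4)=(0,0)$, equation $(11)$ reads $b_1(1+2x_2)=0$; for $x_2\neq -\tfrac12$ one gets $b_1=b_2=0$ and (using $x_2\neq \tfrac12$ from nonunimodularity) the family $\B_3$; for $x_2=-\tfrac12$ a rescaling of $(e_3,e_4)$ normalizes $b_1$ to $1$ and produces $\B_4$. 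Finally, when $p_1=p_2=r_1=r_2=0$ and $(b_3,b_4)\neq(0,0)$, equations $(13),(14)$ force $(x_2,y_2)=(0,0)$, and an $SO(2)$ rotation of $(e_3,e_4)$ (which commutes with $J|_\p$ and therefore preserves the standard form of the triple) brings $(b_3,b_4)$ to $(1,0)$, giving $\B_2$ after using $(11)$ to express $b_1$ in terms of $q_1,q_2$.

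The main obstacle I expect is bookkeeping the admissible changes of basis. One must work only with transformations that preserve the standard forms of $J$, $R$ and $\sigma$: automorphisms $\phi(A)$ with $A$ respecting the splitting $\h\oplus\p$, of the form $A|_\h\in GL(\h)$ fixing $a_1=1,a_2=0$ (up to an allowed rescaling of $R$) and $A|_\p$ commuting with $J|_\p$ (hence in $\R^*\cdot SO(2)$), together with $B$-transformations whose image lies in suitable pieces of $\wedge^2\g^*$. Verifying that the remaining degrees of freedom exactly match the parameters $(q_1,q_2,y)$ appearing in the statements of $\B_1$–$\B_4$, and that no further identifications reduce the families, is the part requiring the most care.
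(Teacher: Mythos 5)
Your proposal follows essentially the same route as the paper: reduce to the normal form of Proposition \ref{step1}, normalize $a_1=1$, $a_2=0$ to get $x_1=y_1=0$, and then solve the quadratic system \eqref{S} by a case analysis on the determinants of the embedded $2\times2$ linear systems. The determinant computations, the identity $b_2(1-2x_2)=0$, and the use of nonunimodularity to exclude $x_2=\tfrac12$ with $b_3=b_4=0$ are all correct, and your handling of $b_4$ (kept in play and killed at the end by a rotation of $(e_3,e_4)$ commuting with $J|_\p$) is a legitimate variant of the paper's implicit normalization $b_4=0$. Two points need correcting, though. First, your regimes are not exclusive: $(x_2,y_2)=(1,0)$ is compatible with $(p_1,p_2)=(0,0)$, and in that overlap equations $(13),(14)$ force $b_3=b_4=0$ as well, so the resulting algebra is of type $\B_3$ (with $x=1$), not $\B_1$; the branch producing $\B_1$ is really $(p_1,p_2)\neq(0,0)$, which then forces $(x_2,y_2)=(1,0)$. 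Second, $B$-transformations cannot be used to ``absorb'' structure constants: $\exp(B)$ acts on the triple $(J,R,\si)$ and leaves the Lie bracket untouched, so for this theorem --- whose content is the existence of a basis in which both the triple is standard and the brackets are normalized --- the only admissible moves are the basis changes you list at the end (arbitrary on $\h$ up to rescaling $R$, conformal-orthogonal on $\p$), and these do suffice; $B$-transformations only become relevant in the later equivalence classification of Theorem \ref{type1}.
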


\begin{proof} We are in the situation of Proposition \ref{step1}. We can choose $(e_1,e_2)$ such that $a_1=1$ and $a_2=0$.
	 Moreover, from \eqref{S} we get that $x_1=y_1=0$ and
	\[ \begin{cases}
	(1-x_{{2}})p_{{1}}+p_{{2}}y_{{2}}=0,\\
	-p_{{1}}y_{{2}}+p_{{2}}(1-x_2)=0,\\
	p_{{1}}x_{{2}}+(p_{{2}}+b_3)y_{{2}}=0,\\
	(p_2-b_{{3}})x_{{2}}-p_{{1}}y_{{2}}=0,\\
	-r_{{1}}x_{{2}}+r_{{2}}y_{{2}}+r_{{1}}=0,\\
	r_{{1}}x_{{2}}+r_{{2}}y_{{2}}=0,\\
	-r_{{1}}y_{{2}}-r_{{2}}x_{{2}}+r_{{2}}=0,\\
	r_{{1}}y_{{2}}-r_{{2}}x_{{2}}=0,
	\end{cases}\esp \begin{cases}
	-b_{{3}}p_{{1}}+q_{{1}}r_{{2}}-q_{{2}}r_{{1}}+b_{{2}}=0,\\
	-b_{{3}}r_{{1}}-2\,p_{{1}}r_{{2}}+2\,p_{{2}}r_{{1}}=0,\\
	-2\,b_{{1}}x_{{2}}-b_{{3}}q_{{1}}+2\,p_{{1}}q_{{2}}-2\,p_{{2}}q_{{1}}-
	b_{{1}}=0,\\
	-2\,b_{{2}}x_{{2}}+b_{{3}}p_{{1}}-q_{{1}}r_{{2}}+q_{{2}}r_{{1}}=0.
	\end{cases} \]The system
	\[ 
	r_{{1}}(1-x_{{2}})+r_{{2}}y_{{2}}=
	-r_{{1}}y_{{2}}+r_{{2}}(1-x_2)=
	r_{{1}}x_{{2}}+r_{{2}}y_{{2}}=
	-r_{{2}}x_{{2}}+r_{{1}}y_{{2}}=0,
	 \]Implies that $r_1=r_2=0$. Then our system becomes
	\begin{equation}\label{jnu} \begin{cases}
	(1-x_{{2}})p_{{1}}+p_{{2}}y_{{2}}=0,\\
	-p_{{1}}y_{{2}}+p_{{2}}(1-x_2)=0,\\
	p_{{1}}x_{{2}}+(p_{{2}}+b_3)y_{{2}}=0,\\
	(p_2-b_{{3}})x_{{2}}-p_{{1}}y_{{2}}=0,\\
	\end{cases}\esp \begin{cases}
	-b_{{3}}p_{{1}}+b_{{2}}=0,\\
	-2\,b_{{1}}x_{{2}}-b_{{3}}q_{{1}}+2\,p_{{1}}q_{{2}}-2\,p_{{2}}q_{{1}}-
	b_{{1}}=0,\\
	-2\,b_{{2}}x_{{2}}+b_{{3}}p_{{1}}=0.
	\end{cases} \end{equation}
	We distinguish three cases:
	
	\begin{itemize}
		\item[$(i)$] $(p_1,p_2)=(0,0)$ and $b_3\not=0$. Then the system \eqref{jnu} is equivalent to
		\[ x_2=y_2=b_2=0\esp b_1=-b_3q_1. \]Then
			 the Lie algebra is isomorphic to  $\B_2$.
			\item[$(ii)$] $(p_1,p_2)=(0,0)$ and $b_3=0$. Then the system \eqref{jnu} is equivalent to $b_2=0$ and $b_1(2x_2+1)=0$. In this case $\g$ is isomorphic either to $\B_3$ or  $\B_4$.
			
		\item[$(iii)$] $(p_1,p_2)\not=(0,0)$. Then  \eqref{jnu} is equivalent to 
		\[ y_2=0, x_2=1, p_1=0, p_2=b_3, b_2=0, b_1=-p_2q_1 \]	and the Lie algebra is isomorphic to  $\B_1$.
	\end{itemize}To conclude, one can make a change of basis of $\h$ without changing the general form of the Lie brackets in order to get $R=e_{12}^\#$.
	\end{proof}

In Table \ref{12},  for each class of Lie algebras $\B_i$, $i=1,2,3,4$ obtained in Theorem \ref{main2}, we build a family of isomorphisms (depending on the values of the parameters) from $\B_i$ onto  a nonunimodular four dimensional Lie algebra in  Table \ref{1}.

\begin{theo}\label{main3} Let $(\g,J,R,\sigma)$ be a generalized complex structure of type 1 on  a four dimensional nonunimodular  Lie algebra where $\mathrm{Im\;}R$  is  abelian. Then there exists a basis $(e_1,e_2,e_3,e_4)$ of $\g$ such that
	\[ J=\la(E_{11}+E_{22})+E_{34}-E_{43} ,\; R= e_{12}^\#\esp\si=(1+\la^2)e^{12}_\#, \]
	and the non vanishing Lie brackets have one of the following forms:
	\begin{enumerate}
		\item[$\A_1:$]
		$
		[e_1,e_3]=e_3-y_1e_4,\;[e_1,e_4]=y_1e_3+e_4,
		[e_2,e_3]=-y_2e_4,\;[e_2,e_4]=y_2e_3$.

		\item[$\A_2:$] $ [e_1,e_3]=xe_3-cxe_4-ae_2,\; [e_1,e_4]=cxe_3+xe_4-be_2, (a,b)\not=(0,0),\; x\not=0.$

		\item[$\A_3:$] $[e_3,e_4]=b_1e_1+b_2e_2+b_3e_3,
		\;[e_1,e_4]=pe_1+re_2,
		\;[e_2,e_4]=qe_1-pe_2,\; b_3\not=0.
		$
		
		\item[$\A_4:$] $[e_3,e_4]=b_1e_1+b_2e_2-2e_3,\;[e_1,e_4]=-e_1,
		[e_2,e_3]=-q_1e_1,\;[e_2,e_4]=-q_2e_1+e_2,\;q_1\not=0.
		$

		\item[$\A_5:$] $[e_3,e_4]=-{p^2}e_1+{p^2}e_2+2pe_3,
		\;[e_1,e_3]=e_3-e_4-{p}e_1,\;[e_1,e_4]=e_3+e_4+pe_2,
		\;		[e_2,e_3]=(e_3+e_4+pe_2),\;[e_2,e_4]=-(e_3-e_4
		-{p}e_1),\; p\not=0.$
		
	\end{enumerate}

\end{theo}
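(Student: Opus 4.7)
The plan is to work again within the framework of Proposition \ref{step1}, now specialized to the assumption that $\h=\mathrm{Im}\,R$ is abelian (so $a_1=a_2=0$) while $\G$ is nonunimodular, which by Remark \ref{rem1} forces $(x_1,x_2,b_3,b_4)\neq(0,0,0,0)$. With $a_1=a_2=0$, the Jacobi system \eqref{S} collapses substantially: several equations become bilinear in the pairs $(x_i,y_i),(p_i,q_i,r_i)$ and the remaining ones relate these to $(b_1,b_2,b_3,b_4)$. I would first record the residual system explicitly, so that all subsequent case analysis reduces to solving a concrete family of polynomial equations.

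Next I would encode the operators $\phi_\h(e_k)|_\p$ as complex numbers $z_k=x_k+iy_k$, exploiting the fact (established in the proof of $C2$ in Section~\ref{section2}) that they commute with $J|_\p$. The Killing form identity of Remark \ref{rem1} then takes the invariant form
\[ Q_\h=2\begin{pmatrix}\mathrm{Re}(z_1^2)&\mathrm{Re}(z_1z_2)\\\mathrm{Re}(z_1z_2)&\mathrm{Re}(z_2^2)\end{pmatrix}, \]
which is insensitive to the remaining gauge freedom (arbitrary $A\in GL_2(\R)$ on $(e_1,e_2)$ preserving $\h$ abelian, together with a complex rescaling of $e_3+ie_4$, and $B$-transformations). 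I would use this invariant to prevent accidentally merging inequivalent families across cases.

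The case split is organized by the real dimension of the span of $\{z_1,z_2\}$ in $\C$. If $z_1=z_2=0$, then $\phi_\h|_\p=0$ and nonunimodularity is carried by $(b_3,b_4)$; one then classifies the traceless maps $\phi_\p(e_3),\phi_\p(e_4)$ on $\h$ modulo the $GL_2$ action, yielding the families $\A_3$ and $\A_4$. If $\dim_\R\mathrm{span}(z_1,z_2)=1$, a $GL_2$-change sets $z_2=0$, and the rotation-scaling on $\p$ normalizes $z_1$; the residual equations then force the $r,p,q$-parameters to fit the pattern of $\A_2$. If $\dim_\R\mathrm{span}(z_1,z_2)=2$, both degrees of freedom in $\{z_1,z_2\}$ are essential, leading to $\A_1$ (when the action of $\phi_\p$ on $\h$ decouples) and $\A_5$ (when it couples nontrivially with a nonzero $b_3$), the latter arising from the subsystem forcing the specific ratios in the brackets of $\A_5$.

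The main obstacle will be the systematic enumeration of sub-cases and the careful exploitation of the remaining gauge freedom to normalize away scalar parameters without losing any family. In each branch, one has a coupled polynomial system whose solution set must be put into normal form by $A\in GL_2(\R)$, by rotation-scaling on $\p$, and by $B$-transformations; checking that the resulting basis indeed satisfies $R=e_{12}^\#$ and matches one of $\A_1,\ldots,\A_5$ requires some bookkeeping. The Killing form invariant above, together with comparing the dimensions of $\ker\phi_\h$, $\mathrm{Im}\,\phi_\p$, and $[\G,\G]$, provides the safety net ensuring the five families are pairwise inequivalent and exhaustive. Finally, as in Theorems \ref{main1} and \ref{main2}, one rescales $(e_1,e_2)$ to realize $R=e_{12}^\#$ without disturbing the form of the brackets.
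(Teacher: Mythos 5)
Your overall framework coincides with the paper's: specialize Proposition \ref{step1} to $a_1=a_2=0$, solve the residual Jacobi system \eqref{S}, and normalize by $GL_2(\R)$ on $(e_1,e_2)$, changes of $e_3$, and $B$-transformations; the Killing-form formula of Remark \ref{rem1} is also the paper's central tool, although there it is not used as a separating invariant but constructively, in the branch $(b_3,b_4)\not=(0,0)$, $(x_1,x_2)\not=(0,0)$, to show $Q_\h$ is nondegenerate Lorentzian and to extract the null basis $x_1^2=y_1^2$, $x_2^2=y_2^2$, $x_1x_2-y_1y_2=2$ that yields $\A_5$. Reorganizing the case split by $\dim_\R\mathrm{span}(z_1,z_2)$ instead of by $(b_3,b_4)$ and then $(x_1,x_2)$ is legitimate in principle and covers the same territory.

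There is, however, a concrete flaw in your normalization step. Since $\phi_\h(e_1)|_\p$ commutes with $J|_\p$, it is multiplication by $\overline{z_1}$ on $(\p,J|_\p)\cong\mathbb{C}$, and conjugating by any $J$-compatible change of basis of $\p$ (your ``complex rescaling of $e_3+ie_4$'') leaves $z_1$ \emph{unchanged}: the only freedom acting on $(z_1,z_2)$ is the real-linear $GL_2(\R)$-action coming from changes of $(e_1,e_2)$. In your rank-one branch, after setting $z_2=0$, the number $z_1$ can therefore only be rescaled by a nonzero real; its argument survives as a genuine modulus, and this is exactly the parameter $c$ in $\A_2$ (respectively $y_1$ in $\A_1$), which separates non-isomorphic targets ($A_{3,3}\oplus A_1$ for $c=0$ versus $A_{3,7}^{1/|c|}\oplus A_1$ for $c\not=0$, cf.\ Table \ref{11}). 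Normalizing $z_1$ away as you propose would collapse these inequivalent families. Two further points: the rank-one branch also contains $\A_1$ with $y_2=0$ (the sub-case where all $p_i,q_i,r_i$ vanish), not only $\A_2$, so the assignment of families to branches must be corrected; and in the rank-two branch with $b_3\not=0$ you still need the paper's null-basis construction from $Q_\h$ (equivalently, a simultaneous real normalization of two independent ``null'' $z_i$ with $x_i^2=y_i^2$) to reach the explicit form of $\A_5$ --- invoking $Q_\h$ only as an invariant does not produce that normal form.
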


\begin{proof} We are in the situation of Proposition \ref{step1} with $\h$ is abelian and $\G$ is nonunimodular.

	{We suppose $b_3=b_4=0$.} Since $\G$ is nonunimodular then, according to \eqref{uni}, $(x_1,x_2)\not=(0,0)$. The system \eqref{S}  can be written
	
	\begin{equation}\label{s1} \begin{cases}
	-{\it p_1}\,{\it x_2}+{\it p_2}\,{\it y_2}+{
		\it q_1}\,{\it x_1}-{\it q_2}\,{\it y_1}=0,\\
	-{\it p_1}\,{\it x_2}-{
		\it p_2}\,{\it y_2}+{\it q_1}\,{\it x_1}+{\it q_2}\,{\it y_1}=0,\\
	-{\it p_1}\,{\it x_1}+{\it p_2}\,{
		\it y_1}-{\it r_1}\,{\it x_2}+{\it r_2}\,{\it y_2}=0,\\
	{\it p_1}\,{\it x_1}+{\it p_2}\,{
		\it y_1}+{\it r_1}\,{\it x_2}+{\it r_2}\,{\it y_2}=0,\\
	-{\it p_1}\,{\it y_2}-{\it p_2}\,{\it x_2}+{
		\it q_1}\,{\it y_1}+{\it q_2}\,{\it x_1}=0,\\
	-{
		\it p_1}\,{\it y_2}+{\it p_2}\,{\it x_2}+{\it q_1}\,{\it y_1}-{\it q_2}\,{
		\it x_1}=0,\\
	-{\it p_1}\,{\it y_1}-{\it p_2}\,{
		\it x_1}-{\it r_1}\,{\it y_2}-{\it r_2}\,{\it x_2}=0,
	\\
	{\it p_1}
	\,{\it y_1}-{\it p_2}\,{\it x_1}+{\it r_1}\,{\it y_2}-{\it r_2}\,{\it x_2}=0,\end{cases}\esp\begin{cases}
	-2\,{\it b_1}\,{\it x_1}-{\it q_2}\,{\it r_1}+{\it q_1}\,{\it r_2}=0,\\
	-2\,{\it b_2}
	\,{\it x_1}-2\,{\it p_1}\,{\it r_2}
	+2\,{\it p_2}\,{\it r_1}=0,\\
	-2\,{\it x_2}\,{\it b_1}+2\,{\it q_2}\,{\it p_1}-2\,{\it p_2}\,{\it q_1}=0,\\
	-2\,{
		\it x_2}\,{\it b_2}+{\it q_2}\,{
		\it r_1}-{\it q_1}\,{\it r_2}=0.
	\end{cases} \end{equation}
	
	The first set of equations is equivalent
	
	\[ \begin{cases}
	{\it q_1}\,{\it x_1}-{\it p_1}\,{\it x_2}=0,\\
	{\it p_1}\,{\it x_1}+{\it r_1}\,{\it x_2}=0,\\
	{\it q_2}\,{
		\it x_1}-{\it p_2}\,{\it x_2}=0,\\
	{\it p_2}\,{
		\it x_1}+{\it r_2}\,{\it x_2}=0,
	\end{cases}\esp
	\begin{cases}
	{\it q_2}\,{\it y_1}-{\it p_2}\,{\it y_2}=0,\\
	{\it p_2}\,{
		\it y_1}+{\it r_2}\,{\it y_2}=0,\\
	{\it q_1}\,{\it y_1}-{\it p_1}\,{\it y_2}
	=0,\\
	{\it p_1}
	\,{\it y_1}+{\it r_1}\,{\it y_2}=0.\end{cases}\]
	Since $e_1$ and $e_2$ are interchangeable, we can suppose that $x_1\not=0.$ So we get
	\[ p_1=-ax_2, q_1=-a\frac{x_2^2}{x_1}, r_1=ax_1 \]and
	\[ p_2=-bx_2, q_2=-b\frac{x_2^2}{x_1}, r_2=bx_1\esp a(x_1y_2-x_2y_1)=b(x_1y_2-x_2y_1)=0. \]If we replace in
	\[ \begin{cases}
	-2\,{\it b_1}\,{\it x_1}-{\it q_2}\,{\it r_1}+{\it q_1}\,{\it r_2}=0,\\
	-2\,{\it b_2}
	\,{\it x_1}-2\,{\it p_1}\,{\it r_2}
	+2\,{\it p_2}\,{\it r_1}=0,\\
	-2\,{\it x_2}\,{\it b_1}+2\,{\it q_2}\,{\it p_1}-2\,{\it p_2}\,{\it q_1}=0,\\
	-2\,{
		\it x_2}\,{\it b_2}+{\it q_2}\,{
		\it r_1}-{\it q_1}\,{\it r_2}=0.
	\end{cases} \]we get
	$b_1=b_2=0$.
	
	We have two cases
	
	$\bullet$ $a=b=0$. If take change $e_1$ to $\frac1{x_1}e_1$ and $e_2$ to $e_{2}-\frac{x_2}{x_1}e_1$ we get that  $\G$ is isomorphic to $\A_1$.
	
	$\bullet$ $(a,b)\not=(0,0)$ then $y_1=cx_1$ and $y_2=cx_2$ and
	\[  {\begin{cases}
		[e_1,e_2]=0,\;[e_3,e_4]=0,\\
		[e_1,e_3]=x_1e_3-cx_1e_4+ax_2e_1-ax_1e_2,\;
		[e_1,e_4]=cx_1e_3+x_1e_4+bx_2e_1-bx_1e_2,\\
		[e_2,e_3]=x_2e_3-cx_2e_4+a\frac{x_2^2}{x_1}e_1-ax_2e_2,\;
		[e_2,e_4]=cx_2e_3+x_2e_4+b\frac{x_2^2}{x_1}e_1-bx_2e_2.
		\end{cases}
	} \] We take $f_2=x_1e_2-x_2e_1$ and we get
	\[ [f_2,e_3]=[f_2,e_4]=0. \]Therefore, up to a change of parameters, the Lie algebra is isomorphic to $\A_2$.

	{$(b_3,b_4)\not=(0,0)$.  } We can suppose $b_4=0$. Then \eqref{S}
	 is equivalent to
	\[ \begin{cases}
	2{\it q_1}\,{\it x_1}-2{\it p_1}\,{\it x_2}
	-{\it b_3}\,{\it y_2}
	=0,\\
	2{\it q_2}\,{\it y_1}-(2p_2+b_3)y_2=0,\\
	(2{\it p_2}-b_3)\,{
		\it y_1}+2{\it r_2}\,{\it y_2}
	=0,\\
	2{\it p_1}\,{\it x_1}+2{\it r_1}\,{\it x_2}
	-{	\it b_3}\,{\it y_1}=0,\\
	2{
		\it q_1}\,{\it y_1}-2{\it p_1}\,{\it y_2}
	-{\it b_3}\,{\it x_2}=0,\\
	2{\it q_2}\,{\it x_1}+(b_3-2p_2)x_2
	=0,\\
	{\it (2p_2+b_3)}\,{
		\it x_1}+2{\it r_2}\,{\it x_2}
	=0,\\
	2{\it p_1}\,{\it y_1}+2{\it r_1}\,{\it y_2}
	-{\it b_3}\,{\it x_1}=0.
	\end{cases}\esp\begin{cases}
	-2\,{\it b_1}\,{\it x_1}-{\it b_3}\,{\it p_1}
	-{\it q_2}\,{\it r_1}+{\it q_1}\,{\it r_2}=0,\\
	-2\,{\it b_2}
	\,{\it x_1}-{\it b_3}\,{\it r_1}-2\,{\it p_1}\,{\it r_2}
	+2\,{\it p_2}\,{\it r_1}=0,\\
	-2\,{\it x_2}\,{\it b_1}-{\it b_3}\,{\it q_1}
	+2\,{\it q_2}\,{\it p_1}-2\,{\it p_2}\,{\it q_1}=0,\\
	-2\,{
		\it x_2}\,{\it b_2}+{\it b_3}\,{\it p_1}+{\it q_2}\,{
		\it r_1}-{\it q_1}\,{\it r_2}=0.
	\end{cases} \]
	The first system can be written
	\[ \begin{cases}
	2{\it q_1}\,{\it x_1}-2{\it p_1}\,{\it x_2}
	-{\it b_3}\,{\it y_2}
	=0,\\
	2{\it p_1}\,{\it x_1}+2{\it r_1}\,{\it x_2}
	-{	\it b_3}\,{\it y_1}=0,\\
	2{\it q_2}\,{\it x_1}+(b_3-2p_2)x_2
	=0,\\
	{\it (2p_2+b_3)}\,{
		\it x_1}+2{\it r_2}\,{\it x_2}
	=0,\\
	\end{cases}
	\esp
	\begin{cases}
	2{\it q_2}\,{\it y_1}-(2p_2+b_3)y_2=0,\\
	(2{\it p_2}-b_3)\,{
		\it y_1}+2{\it r_2}\,{\it y_2}
	=0,\\
	2{
		\it q_1}\,{\it y_1}-2{\it p_1}\,{\it y_2}
	-{\it b_3}\,{\it x_2}=0,\\
	
	2{\it p_1}\,{\it y_1}+2{\it r_1}\,{\it y_2}
	-{\it b_3}\,{\it x_1}=0.
	\end{cases}\]
	
	\underline{$x_1=x_2=0$}. Then $y_1=y_2=0$ and
	\[\begin{cases}
	(2p_2-{\it b_3})\,{\it r_1}-2\,{\it p_1}\,{\it r_2}
	=0,\\
	(2p_2+{\it b_3})\,{\it q_1}
	-2\,{\it q_2}\,{\it p_1}=0,\\
	{\it b_3}\,{\it p_1}+{\it q_2}\,{
		\it r_1}-{\it q_1}\,{\it r_2}=0.
	\end{cases} \]

	$\bullet$ $p_1=0$, $r_1=0$ and $q_1=0$. Then, up to a change of parameters, $\G$ is isomorphic to $\A_3.$

	$\bullet$ $p_1=0$, $r_1=0$ and $q_1\not=0$. Then $b_3=-2p_2$ and $r_2=0$ and,   up to a change of parameters, $\G$ is isomorphic to $\A_4$.
		
	$\bullet$ $p_1=0$, $r_1\not=0$ then $q_1=0$, $b_3=2p_2$ and $q_2=0$ and, up to a change of parameters, $\G$ is isomorphic to $\A_4$.
	
	$\bullet$ $p_1\not=0$. Then
	\[ r_1=2a p_1,\; r_2=a(2p_2-b_3),\; q_1=2bp_1\esp  q_2=b(2p_2+b_3). \]
	We replace in the last equation and we get
	\[ 0=b_3p_1+ 2ab(2p_2+b_3)p_1-2abp_1(2p_2-b_3)=b_3p_1+4abp_1b_3 \]
	thus
	\[ ab=-\frac14, \]and
	\[ {\begin{cases}
		[e_1,e_2]=0,\;[e_3,e_4]=b_1e_1+b_2e_2+b_3e_3,\\
		[e_1,e_3]=-p_1e_1-2ap_1e_2,\;[e_1,e_4]=-p_2e_1-a(2p_2-b_3)e_2,\\
		[e_2,e_3]=\frac1{2a}p_1e_1+p_1e_2,\;[e_2,e_4]=\frac1{4a}(2p_2+b_3)e_1+p_2e_2. p\not=0.
		\end{cases}
	} \]Put $f_2=e_1+2ae_2$ then
	\[ [f_2,e_3]=0\esp [f_2,e_4]=\frac12b_3f_2. \]
	Therefore, up to an change of parameters, the Lie algebra is isomorphic to $\A_4.$

	{$(x_1,x_2)\not=(0,0)$.} Note first that this condition is independent of the choice of the basis $(e_1,e_2)$, i.e., if it is true for 	a basis then it is true for any other choice of basis. The system \eqref{S}  is equivalent to the three systems
	\begin{equation}\label{h} \begin{cases}
	2{\it q_1}\,{\it x_1}-2{\it p_1}\,{\it x_2}
	-{\it b_3}\,{\it y_2}
	=0,\\
	2{\it p_1}\,{\it x_1}+2{\it r_1}\,{\it x_2}
	-{	\it b_3}\,{\it y_1}=0,\\
	2{\it q_2}\,{\it x_1}+(b_3-2p_2)x_2
	=0,\\
	{\it (2p_2+b_3)}\,{
		\it x_1}+2{\it r_2}\,{\it x_2}
	=0,\\
	\end{cases}
	\begin{cases}
	2{\it q_2}\,{\it y_1}-(2p_2+b_3)y_2=0,\\
	(2{\it p_2}-b_3)\,{
		\it y_1}+2{\it r_2}\,{\it y_2}
	=0,\\
	2{
		\it q_1}\,{\it y_1}-2{\it p_1}\,{\it y_2}
	-{\it b_3}\,{\it x_2}=0,\\
	
	2{\it p_1}\,{\it y_1}+2{\it r_1}\,{\it y_2}
	-{\it b_3}\,{\it x_1}=0.
	\end{cases} 
	\begin{cases}
	2\,{\it b_1}\,{\it x_1}+{\it b_3}\,{\it p_1}
	+{\it q_2}\,{\it r_1}-{\it q_1}\,{\it r_2}=0,\\
	b_1x_1+b_2x_2=0,\\
	(2p_2-{\it b_3})\,{\it r_1}-2\,{\it p_1}\,{\it r_2}
	= 2\,{\it b_2}
	\,{\it x_1},\\
	(2p_2+{\it b_3})\,{\it q_1}
	-2\,{\it q_2}\,{\it p_1}=-2\,{\it x_2}\,{\it b_1}.
	\end{cases} \end{equation}
	Let us first show that the restriction of the Killing form $Q$ of $\G$ to $\h$ is nondegenerate Lorentzian. According to \eqref{K}, the matrix of this restriction in the basis $(e_1,e_2)$ is given by
	\[ Q_\h=\left( \begin {array}{cc} 2\,{x_{{1}}}^{2}-2\,{y_{{1}}}
	^{2}&2\,x_{{1}}x_{{2}}-2\,y_{{1}}y_{{2}}
	\\ \noalign{\medskip}2\,x_{{1}}x_{{2}}-2\,y_{{1}}y_{{2
		}}&2\,{x_{{2}}}^{2}-2\,{y_{{2}}}^{2}\end {array}
		\right) . \]
	From this formula, one can see easily that if $Q_\h$ is nondegenerate then it must be Lorentzian.
	
	Suppose  that $Q_\h$ is degenerate. Then we can choose a basis $(e_1,e_2)$ of $\h$ such that
	\[ x_1^2-y_1^2=x_1x_2-y_1y_2=0. \]Thus $y_1=\e x_1$, $x_1(x_2-\e y_2)=0$ and $\e=\pm1$. If $x_1=0$ then $y_1=0$ and from \eqref{h} we get $b_3=2p$, $r_2=0$, $y_2=0$ and $b_3=0$ which is impossible. If $x_1\not=0$ then $y_2=\e x_2$. From the last equation in the first system in \eqref{h} and the second equation in the second system, we get
	\[ 2r_2=\mu_1x_1=\mu_2y_1, 2p_2+b_3=-\mu_1x_2\esp 2p_2-b_3=-\mu_2y_2. \]
	Since $x_1\not=0$ and $y_i=\e x_i$ we deduce that $b_3=0$ which is impossible.
	
	In conclusion, we get that $Q_\h$ is nondegenerate Lorentzian and then there exists a  basis $(e_1,e_2)$ such that
	\[ x_1^2-y_1^2=x_2^2-y_2^2=0\esp x_1x_2-y_1y_2=2. \]
	So
	\[ x_2=\frac1{x_1},\; y_1=x_1\esp y_2=-x_2. \]
	The system \eqref{h} becomes
	\begin{equation}\label{h1} \begin{cases}
	2{\it q_1}\,{\it x_1}-(2{\it p_1}-b_3)\,{\it x_1^{-1}}
	=0,\\
	(2{\it p_1}-b_3)\,{\it x_1}+2{\it r_1}\,{\it x_1^{-1}}
	=0,\\
	2{\it q_2}\,{\it x_1}+(b_3-2p_2)x_1^{-1}
	=0,\\
	{\it (2p_2+b_3)}\,{
		\it x_1}+2{\it r_2}\,{\it x_1^{-1}}
	=0,\\
	\end{cases}
	\begin{cases}
	2{\it q_2}\,{\it x_1}+(2p_2+b_3)x_1^{-1}=0,\\
	(2{\it p_2}-b_3)\,{
		\it x_1}-2{\it r_2}\,{\it x_1^{-1}}
	=0,\\
	2{
		\it q_1}\,{\it x_1}+(2{\it p_1}-b_3)\,{\it x_1^{-1}}
	=0,\\
	(2{\it p_1}-b_3)\,{\it x_1}-2{\it r_1}\,{\it x_1^{-1}}
	=0.
	\end{cases} 
	\begin{cases}
	2\,{\it b_1}\,{\it x_1}+{\it b_3}\,{\it p_1}
	+{\it q_2}\,{\it r_1}-{\it q_1}\,{\it r_2}=0,\\
	b_1x_1+b_2x_2=0,\\
	(2p_2-{\it b_3})\,{\it r_1}-2\,{\it p_1}\,{\it r_2}
	= 2\,{\it b_2}
	\,{\it x_1},\\
	(2p_2+{\it b_3})\,{\it q_1}
	-2\,{\it q_2}\,{\it p_1}=-2\,{\it x_2}\,{\it b_1}.
	\end{cases} \end{equation}
	We deduce that
	\[ r_1=q_1=p_2=0, b_3=2p_1, q_2=-\frac{p_1}{x_1^2},r_2=-p_1x_1^2, b_1=-\frac{p_1^2}{x_1},b_2={p_1^2}x_1, \]and hence
	\[ \begin{cases}[e_1,e_2]=0,\;[e_3,e_4]=-\frac{p_1^2}{x_1}e_1+{p_1^2}x_1e_2+2p_1e_3,\\
	\;[e_1,e_3]=x_1(e_3-e_4-\frac{p_1}{x_1}e_1),\;[e_1,e_4]=x_1(e_3+e_4+p_1x_1e_2),\\
	\;		[e_2,e_3]=\frac1{x_1}(e_3+e_4+p_1x_1e_2),\;[e_2,e_4]=-\frac1{x_1}(e_3-e_4
	-\frac{p_1}{x_1}e_1).
	\end{cases} \]
	If we change $(e_1,e_2)$ to $(\frac1{x_1}e_1,x_1e_2)$ we get that the Lie algebra is isomorphic to $\A_5$. To conclude, one can make a change of basis of $\h$ without changing the general form of the Lie brackets in order to get $R=e_{12}^\#$.
	\end{proof}
	
	In Table \ref{11},  for each class of Lie algebras $\A_i$, $i=1,\ldots,5$ obtained in Theorem \ref{main3}, we build a family of isomorphisms (depending on the values of the parameters) from $\A_i$ onto  a nonunimodular four dimensional Lie algebra in  Table \ref{1}.

\section{Generalized complex structures of type 1 in four dimensional Lie algebras}\label{section3}

In this section, we give the classification of generalized complex structures of type 1 in four dimensional Lie algebras. Let us describe our method and give our results.
  We proceed as follows:
\begin{enumerate} \item For each family of Lie algebras $\U_i$, $\B_i$ and $\A_i$ obtained in Theorems \ref{main1}-\ref{main3},  we build a family of isomorphisms (depending on the values of the parameters) from this family onto    four dimensional Lie algebras in  Table \ref{1} and Table \ref{2}. 
This step is summarized in Tables \ref{10}-\ref{12}. 
	
	\item Once performed, the first step give us the list $\mathcal{L}$ of Lie algebras in Tables \ref{1} and \ref{2} which have a generalized complex structure of type 1. Each Lie algebra in $\mathcal{L}$ is isomorphic in different ways to some $\U_i$, $\B_i$ or $\A_i$ and hence inherits a family of generalized complex structures of type 1.

	\item The last step is the classification up to automorphisms transformations,  $B$-transformations and homotheties. The groups of automorphisms of four dimensional Lie algebras were given in \cite{P} and Table \ref{13} contains the vector spaces of  2-cocycles of four dimensional Lie algebras. This step involved a huge amount of computation using Maple and the details are given in the Appendix.

\end{enumerate}

This method  leads to the following result. 

\begin{theo}\label{type1} Let $(\G,J,R,\sigma)$ be a four dimensional  Lie algebra endowed with a generalized complex structure of type 1. Then $\G$ is isomorphic to one of the Lie algebras listed in Tables \ref{8} and \ref{9} with the corresponding triple $(J,R,\si)$ and the associated pure spinor.
	
\end{theo}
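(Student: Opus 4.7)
The plan is to carry out the three-step program already outlined before the statement, using Propositions \ref{step1} and \ref{spinor} together with the structural Theorems \ref{main1}, \ref{main2} and \ref{main3} as the backbone. Given $(\G,J,R,\si)$ of type 1 in dimension four, Proposition \ref{step1} gives a preferred basis $(e_1,e_2,e_3,e_4)$ in which $\h=\mathrm{Im\;}R=\spa(e_1,e_2)$, $\p=\ker\si=\spa(e_3,e_4)$, and the triple has the normal form
\[ J=\la(E_{11}+E_{22})+E_{34}-E_{43},\quad R=e_{12}^\#,\quad \si=(1+\la^2)e^{12}_\#. \]
Depending on whether $\G$ is unimodular or not, and whether $\h$ is abelian or not, the bracket is forced (via the system \eqref{S}) into one of the normal forms $\U_1,\U_2,\U_3$, $\B_1,\dots,\B_4$ or $\A_1,\dots,\A_5$.

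The first step is to recognize each normal form as an isomorphic copy of a Lie algebra from Mubarakzyanov's list (Tables \ref{1} and \ref{2}). For each value of the parameters appearing in $\U_i$, $\B_i$, $\A_i$, I would explicitly exhibit an isomorphism onto the appropriate Lie algebra; these explicit changes of basis are recorded in Tables \ref{10}, \ref{11} and \ref{12}. This already determines which four-dimensional Lie algebras appear (and, by complement, proves the non-existence claims stated in item 1 of the introduction, e.g.\ that $A_{3,2}\oplus A_1$, $A_{4,4}$, etc.\ carry no type 1 generalized complex structure).

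The second step is the converse direction: on each Lie algebra $\G$ appearing in the previous step, I pull back the triple $(J,R,\si)$ through all admissible isomorphisms to obtain a parametrized family of type 1 generalized complex structures on $\G$. For each such structure I compute the associated pure spinor via the closed formula
\[ \rho=(\theta+iJ^*\theta)+(i-\la)\,\om\wedge(\theta+iJ^*\theta) \]
of Proposition \ref{spinor}, taking $\theta=e^3$, $\om=e^{12}$, so the spinor reads $\rho=(e^3+ie^4)+(i-\la)e^{12}\wedge(e^3+ie^4)$ in the working basis and can then be transported to the standard basis of the target Lie algebra.

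The third and most delicate step is the reduction modulo the equivalence generated by automorphism transformations $\phi(A)$, $B$-transformations $\exp(B)$, and homotheties on $(R,\si)$. For each target Lie algebra $\G\in\mathcal{L}$ I would take the full automorphism group $\mathrm{Aut}(\G)$ (available in \cite{P}) and the space of 2-cocycles $Z^2(\G,\G^*)$ (tabulated in Table \ref{13}), compute the action of $K\mapsto K_A$ and $K\mapsto K_B$ from \eqref{t} on the parametrized family of the previous step, and select a normal form in each orbit. The hard part will be this orbit analysis: the action of $\phi(A)$ on $J$ is conjugation, but the simultaneous action on $R$ and $\si$ couples parameters nonlinearly, while the $B$-transformation shifts $\si\mapsto BJ+\si-BRB+J^*B$, so only a coordinated choice of $A$ and $B$ eliminates redundancies. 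Because the calculations are case-by-case and large, they are relegated to the Appendix (Section \ref{section7}), and the resulting representatives with their pure spinors are tabulated in Tables \ref{8} and \ref{9}, which is exactly the statement of the theorem.
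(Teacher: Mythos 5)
Your proposal follows essentially the same route as the paper: normal forms from Proposition \ref{step1} and Theorems \ref{main1}--\ref{main3}, explicit isomorphisms onto the Mubarakzyanov--Patera list (Tables \ref{10}--\ref{12}), transport of $(J,R,\si)$ and of the spinor from Proposition \ref{spinor}, and orbit reduction under $\phi(A)$ and $\exp(B)$ using the automorphism groups of \cite{P} and the 2-cocycles of Table \ref{13}, with the case-by-case computations deferred to the Appendix. The only point you leave implicit is the verification that the chosen representatives are pairwise inequivalent (e.g.\ via invariants such as the eigenvalues of $J$ and the position of $\mathrm{Im\;}R$ relative to the center), which the paper carries out explicitly for $A_{3,1}\oplus A_1$.
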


\begin{proof} The general scheme of the proof is as follows. We take a Lie algebra, say $\G$, from the ones obtained in Theorem \ref{main1}-\ref{main3}. This Lie algebra has a basis $\mathbb{B}_0=(e_1,e_2,e_3,e_4)$ where the Lie brackets depend on some parameters and the generalized complex structure is given by
	\[ J_0=\la(E_{11}+E_{22})+E_{34}-E_{43} ,\; R_0= e_{12}^\#\esp\si_0=(1+\la^2)e^{12}_\#. \]
	Depending on the parameters defining the Lie brackets this Lie algebra is isomorphic to a family of Lie algebras in the Tables \ref{1} or \ref{2}. The list of such isomorphisms are given in Tables \ref{10}-\ref{12}. Suppose that we have an isomorphism from $\G$ to a Lie algebra, say $A$, in Tables \ref{1} or \ref{2}. This isomorphism is given by the passage matrix $P$ from $\mathbb{B}_0$ to $\mathbb{B}=(f_1,f_2,f_3,f_4)$. The image by $P$ of the generalized complex structure $(J_0,R_0,\si_0)$ is given by the matrices of its component in the bases $\mathbb{B}$ and $\mathbb{B}^*$ by
	\begin{equation}\label{m} J_1=P^{-1}J_0P,\; R_1=P^{-1}R_0(P^{-1})^t\esp \si_1=P\si_0P^t. \end{equation}
	In this way we collect all the possible  generalized complex structures  on $A$. Thereafter, we proceed to the classification up to  automorphism transformations and  $B$-transformations.  In \eqref{t}, one can find how such transformations affect a given generalized complex structure. The automorphisms of four dimensional Lie algebras are given in \cite{P} and their 2-cocycles are given in Table \ref{13}.
	
	Let us perform the scheme above for  $A=A_{3,1}\oplus A_1$ with its basis $\mathbb{B}=(f_1,f_2,f_3,f_4)$ where the non vanishing Lie bracket $[f_2,f_3]=f_1$, the others cases are treated in the Appendix. This case has the advantage of using all the  techniques needed in the general case.
	
	The linear maps from $A$ to $A$, $A$ to $A^*$ or $A^*$ to $A$ are given by their matrices in the bases $\mathbb{B}$ and its dual $\mathbb{B}^*$.
	Note first that   the automorphisms  and the 2-cocycles of $A$ have, respectively, the form
	\[ T=\left[ \begin {array}{cccc} uv-xy&p&r&s\\ \noalign{\medskip}0&u&x&0
	\\ \noalign{\medskip}0&y&v&0\\ \noalign{\medskip}0&z&t&w\end {array}
	\right]\esp B=\left[ \begin {array}{cccc} 0&a_{{1,2}}&a_{{1,3}}&0
	\\ \noalign{\medskip}-a_{{1,2}}&0&a_{{2,3}}&a_{{2,4}}
	\\ \noalign{\medskip}-a_{{1,3}}&-a_{{2,3}}&0&a_{{3,4}}
	\\ \noalign{\medskip}0&-a_{{2,4}}&-a_{{3,4}}&0\end {array} \right] .
	 \]Recall that 
	  $\phi(T)=\left(\begin{array}{cc}T&0\\0&(T^{-1})^*\end{array} \right)$ and $\exp(B)=\left(\begin{array}{cc}\mathrm{Id}_A&0\\B&\mathrm{Id}_{A^*}\end{array} \right)$ are, respectively, the automorphism transformation and the $B$-transformation  associated to $T$ and  $B$. 
	  
According to Table \ref{10}, $A$ is obtained 3 times from $\U_2$ and 3 times from $\U_3$. Let us study each of this cases and derive the generalized complex structures obtained.

\begin{enumerate}

\item For $y=0,b_2\not=0,q_1=q_2=0$, the transformation
$$f_1=b_1e_1+b_2e_2,f_2=e_3,f_3=e_4,f_4=e_1$$gives an isomorphism from $\U_2$ onto $A$ and, by virtue of \eqref{m},  the generalized complex structure obtained on $A$ is 
\[ J_1=\la(E_{11}+E_{44})+E_{23}-E_{32},\;R_1=\frac1{b_2}f_{14}^\#\esp
\si_1=b_2(1+\la^2)f^{14}_\#. \]

	\item	For	$y=0,b_2=0,b_1=0,q_1\not=0$, the transformation
			$$f_1=-q_1e_1,f_2=-q_1e_2,f_3=-\frac{1}{q_1}e_3,f_4=
			-\frac{q_2}{q_1}e_3+e_4$$gives an isomorphism from $\U_2$ onto $A$ and the generalized complex structure obtained on $A$ is 
	\[ J_2=\lambda(E_{11}+E_{22})+\frac{q_2}{q_1}(E_{44}-E_{33})+\frac1{q_1}E_{43}-\frac{q_1^2+q_2^2}{q_1}E_{34}
	 ,\; R_2=-\frac1{q_1^2}f_{12}^\#
	\esp\si_2= -q_1^2(1+\la^2)f^{12}_\#. 
	 \]		
			
		\item	For	$y=0,b_2=0,b_1=0,q_2\not=0$, the transformation
		$$f_1=-q_2e_1,f_2=-q_2e_2,f_3=-\frac{1}{q_2}e_4,f_4=-\frac{q_1}{q_2}e_4+e_3$$gives an isomorphism from $\U_2$ onto $A$ and the generalized complex structure obtained on $A$ is 
		\[ J_{2b}=\lambda(E_{11}+E_{22})-\frac{q_1}{q_2}(E_{44}-E_{33})-\frac1{q_2}E_{43}+\frac{q_1^2+q_2^2}{q_2}E_{34}
		,\; R_{2b}=-\frac1{q_2^2}f_{12}^\#
		\esp\si_{2b}= -q_2^2(1+\la^2)f^{12}_\#. 
		\]

		\item For	$y=0,b_2=0,b_1\not=0$, the transformation
			$$f_1=b_1e_1,f_2=e_3,f_3=e_4,f_4=e_2+\frac{q_2}{b_1}e_3-\frac{q_1}{b_1}e_4$$gives an isomorphism from $\U_2$ onto $A$ and the generalized complex structure obtained on $A$ is 
			\[ J_3=\lambda(E_{11}+E_{44})+E_{23}-E_{32}-(q_1+\la q_2) E_{24}+(q_1\la-q_2) E_{34},\; R_3=\frac1{b_1^2}\left( q_2f_{12}^\#-q_1f_{13}^\#-f_{14}^\# \right),\;
			\si_3=-b_1^2(\la^2+1)f_{\#}^{14}. 
			 \]
		\item For	$p^2+qr=0,r\not=0,rb_1-p b_2=0$, put $b_1=\mu p,b_2=\mu r$, the transformation
			$$f_1=-pe_1-re_2,f_2=e_1,f_3=e_4,f_4=\mu e_1+e_3,
			$$gives an isomorphism from $\U_3$ onto $A$ and the generalized complex structure obtained on $A$ is
			\[ J_4=\la(E_{11}+E_{22})-\mu E_{23}+\la\mu E_{24}+E_{43}-E_{34},\; R_4=-\frac1r f_{12}^\#\esp\sigma_4=-r(1+\la^2)\left( f^{12}_\#+\mu f_\#^{14}   \right) . \]
			
		\item For	$p=r=0,q\not=0, b_2=0$ the transformation
			$$f_1=-qe_1,f_2=e_2,f_3=e_4,f_4=e_3+\frac{b_1}qe_2$$gives an isomorphism from $\U_3$ onto $A$ and the generalized complex structure obtained on $A$ is 			\[ J_5=\la(E_{11}+E_{22})+E_{43}-E_{34}
			-\frac{b_1}q(E_{23}-\la E_{24}),\; R_5= \frac1qf_{12}^\#
			 \esp\si_5=q((1+\la^2)f^{12}_\#+\left( {\la}^{2}+1 \right) b_{{1}}f^{14}_\#)
			. 
			 \]
			
		\item	For $p=r=q=0, b_i\not=0$, $i=1$ or $i=2$ the transformation $$f_1=b_1e_1+b_2e_2,f_2=e_3,f_3=e_4,f_4=e_i$$gives an isomorphism from $\U_3$ onto $A$ and the generalized complex structure obtained on $A$ is 
		\[ J_6=\la(E_{11}+E_{44})+E_{23}-E_{32},\; R_6=(-1)^i\frac1{b_i}f_{14}^\#\esp\si_6=(-1)^ib_i(1+\la^2)f^{14}_\#. \]

\end{enumerate} The generalized complex structures $(J_1,R_1,\si_1)$ and $(J_6,R_6,\si_6)$ are equivalent to $\mathcal{J}^\la=(\la(E_{11}+E_{44})+E_{23}-E_{32},\; f_{14}^\#,(1+\la^2)f^{14}_\#))$ via an automorphism of the form $(f_1,f_2;f_3,f_4)\mapsto(f_1,f_2,f_3,af_4)$.

Note that $(J_2,R_2,\si_2)$ and $(J_{2b},R_{2b},\si_{2b})$ are the same, up to a change of parameters. The reduction of $(J_2,R_2,\si_2)$ will be done by using both an automorphism and a $B$-transformation. Indeed, the automorphism $T_1=
\frac1{q_1}(E_{11}-E_{22})+pE_{12}-E_{33}+\frac1{q_1^2+q_2^2}(q_2E_{43}+q_1E_{44})$ and $B_1=-\la f^{12}_\#$ satisfy

\[ \exp(B_1)\phi(T_1^{-1})(J_2,R_2,\si_2)\phi(T_1)\exp(-B_1)=(E_{34}-E_{43},f_{12}^\#,f^{12}_\#). \]
 Thus
 $(J_2,R_2,\si_2)$ is equivalent  to $\mathcal{J}_2=(E_{34}-E_{43},f_{12}^\#,f^{12}_\#)$.

We proceed in a similar way for $(J_4,R_4,\si_4)$. The automorphism and the 2-cocycle
$$T_2=-E_{11}+\frac1r E_{22}-r(E_{33}-E_{44})+r\la\mu E_{23}\esp B_2=-\la f^{12}_\#-(1+\la^2)\mu r^2f^{13}_\# $$
 satisfy
 \[ \exp(B_2)\phi(T_2^{-1})(J_4,R_4,\si_4)\phi(T_2)\exp(-B_2)=(E_{34}-E_{43},f_{12}^\#,f^{12}_\#), \]
 and hence $(J_4,R_4,\si_4)$ is equivalent to $\mathcal{J}_2$.
  
 Similarly, the $B$-transformation $B_3=b_1f^{13}_\#-\la qf^{12}_\#$ and the automorphism $T_3=E_{11}+\frac1qE_{22}-{\la b_1}E_{23}+q(E_{33}-E_{44})$ satisfy
 \[ \phi(T_3^{-1})\exp(B_3)(J_5,R_5,\si_5)\exp(-B_3)\phi(T_3)=\mathcal{J}_2.\]

   Let us deal now with $(J_3,R_3,\si_3)$. If $(q_1,q_2)=(0,0)$ then $(J_3,R_3,\si_3)$ is equivalent to $\mathcal{J}^\la$.
    
    If $(q_1,q_2)\not=(0,0)$, then we consider
    \[ \begin{cases}T_4=-\frac{\rho}{b_1^2}E_{11}-\frac{q_2}{\rho}E_{22}+
    \frac{q_1}{\rho}E_{32}+\frac{1}{\rho}E_{42}+\frac{\la q_2+q_1}{b_1^2}E_{23}
    +\frac{q_2-\la q_1}{b_1^2}E_{33}-\frac{1}{b_1^2}E_{44}-\frac{\la}{b_1^2}E_{43},\quad\rho=\sqrt{q_1^2+q_2^2},\\
    \\B_4=-\la f_{12}^\#+\frac{\rho(1+\la^2)}{b_1^2}f_{13}^\#.\end{cases}
     \]
    Then
    \[ \exp(B_4)\phi(T_4^{-1})(J_3,R_3,\si_3)\phi(T_4)\exp(-B_4)=\mathcal{J}_2. \]

     So far, we have shown that a generalized complex structure of type 1 on $A_{3,1}\oplus A_1$ is equivalent to $\mathcal{J}^\la=(\la(E_{11}+E_{44})+E_{23}-E_{32},\; f_{14}^\#,(1+\la^2)f^{14}_\#))$ or $\mathcal{J}_2=(E_{34}-E_{43},f_{12}^\#,f^{12}_\#)$.
Since the $B$-transformations do not affect $R$ and automorphisms transformations preserve the center which is equal to $\mathrm{Im\;}f_{14}^\#$ we deduce that $\mathcal{J}^\la$ and $\mathcal{J}_2$ are not equivalent. To complete the proof, we will show that for $\la_1\not=\la_2$, $\mathcal{J}^{\la_1}$ and $\mathcal{J}^{\la_2}$ are not equivalent. Denote by $J_\la$ the first component of $\mathcal{J}^\la$. Under an automorphism transformation $T$,  $J_\la$ transforms into $TJ_\la T^{-1}$ and hence its eigenvalues doesn't change. On other hand, for any 2-cocycle $B$, $\exp(B)$ transforms $J_\la$ into $J_\la-f_{14}^\# B$ and one can check that $J_\la-f_{14}^\# B$ has the same eigenvalues as $J_\la$ which completes the proof.
\end{proof}

We end this section by giving all the classes of left invariant holomorphic Poisson tensors on four dimensional simply connected Lie groups.

Recall that a holomorphic Poisson tensor on a complex manifold $(M,J)$ is complex bivector field $\Pi=\pi+iJ\circ \pi$ where $\pi$ is a real bivector field and
\[ \mathcal{J}=\left( \begin{array}{cc}J&\pi\\0&-J^* \end{array}  \right) \]is a generalized complex structure on $M$. Hence a left invariant holomorphic Poisson tensor on a Lie group $G$ is equivalent to a generalized complex structure $(J,R,\si)$ on $\G$ with $\si=0$. In dimension four, we have the following result.

\begin{theo}\label{holom} Let $(J,R,\si)$ be generalized complex structure on a four dimensional Lie algebra $\G$ such that $\si=0$. Then $R$ is invertible and $(\G,J,R)$ is isomorphic to one of the following structures:
	\begin{enumerate}\item $A_{3,1}\oplus A_1$$:$
		$\;J=E_{41}-E_{14}+E_{32}-E_{23}\esp R=f_{12}^\#+f_{34}^\#.$

		\item $A_{4,5}^{-1,1}$$:$
		$\;J=E_{31}-E_{13}+E_{42}-E_{24}\esp R=f_{23}^\#-f_{14}^\#.$

		\item $A_{4,9}^{-1/2}$$:$
		$ J=E_{21}-E_{12}+2E_{43}-\frac12E_{34}\esp R= \cos(\theta)\left( f_{24}^\#-\frac12f_{13}^\# \right)+\sin(\theta)\left( f_{14}^\#+\frac12f_{23}^\#    \right) .$

		\item $A_{4,12}$$:$
		$ J=E_{12}-E_{21}+E_{43}-E_{34}\esp R=f_{23}^\#-f_{14}^\#.$

		\end{enumerate}
	
	\end{theo}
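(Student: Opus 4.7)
The plan is to first establish that $R$ must be invertible, then rephrase the problem as classifying invariant holomorphic symplectic structures, and finally enumerate the solutions using the known classifications of complex and symplectic structures on four-dimensional real Lie algebras.

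For the first step, I would exclude the trivial case $R=0$ (which corresponds to the zero Poisson tensor and merely gives back a left invariant complex structure) and argue that $R$ must be invertible otherwise. Since $R\in\wedge^2\G$ is skew-symmetric in dimension four, its rank is $0$, $2$ or $4$. Rank $2$ would make $(J,R,0)$ a generalized complex structure of type $1$, but Proposition \ref{step1} then forces $\si=a^{-1}(1+\la^2)e^{12}_\#$, which is nowhere zero since $1+\la^2>0$ and $a\neq 0$. This contradicts $\si=0$, so $R$ has rank $4$ and is invertible.

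Next, with $R$ invertible, I would set $\om=R^{-1}$ viewed as a $2$-form on $\G$ and decode the conditions of Proposition \ref{pr2}. With $\si=0$, condition $(C0)$ reduces to $J^2=-\mathrm{Id}_\G$ together with $JR=RJ^*$; the latter is precisely the statement that the complex bivector $R+iJ\circ R$ is of type $(2,0)$ or, equivalently, that $\om=R^{-1}$ is a real $2$-form of type $(2,0)+(0,2)$ with respect to $J$. Condition $(C1)$ reads $[R,R]=0$, equivalent to $\mathrm{d}\om=0$ for invertible $R$; condition $(C3)$ with $\si^\flat=0$ reduces to $N_J=0$, so $J$ is integrable; and $(C2),(C4)$ then follow automatically from these. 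Thus $(J,R,0)$ is in bijection with pairs $(J,\om)$ where $J$ is an integrable left invariant complex structure on $\G$ and $\om$ is a left invariant holomorphic symplectic form.

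For the enumeration, I would invoke the classifications of invariant complex and symplectic structures on four-dimensional real Lie algebras given in \cite{O1,O2,Snow}, and for each Lie algebra supporting both structures, test which complex structure $J$ and which symplectic form $\om$ can be chosen compatibly (i.e., $\om$ of pure type with respect to $J$). Running through the Mubarakzyanov list in Tables \ref{1} and \ref{2}, the only survivors are $A_{3,1}\oplus A_1$, $A_{4,5}^{-1,1}$, $A_{4,9}^{-1/2}$ and $A_{4,12}$; for each I would exhibit the explicit basis realizing the stated $(J,R)$ and verify directly from Proposition \ref{pr2} that the triple is a generalized complex structure.

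The main obstacle will be the reduction up to the equivalence of \eqref{t}. A generic $B$-transformation destroys the condition $\si=0$ because it produces $\si'=BJ+J^*B-BRB$; thus the reductions must use automorphism transformations together with only those $B$ satisfying $BJ+J^*B=BRB$. The one-parameter family $\theta$ appearing in the $A_{4,9}^{-1/2}$ case reflects a residual $S^1$-freedom in choosing a unit holomorphic $(2,0)$-form which turns out not to be removable by any admissible transformation; verifying this non-triviality via direct computation against the automorphism group of $A_{4,9}^{-1/2}$ listed in \cite{P} is the delicate bookkeeping step.
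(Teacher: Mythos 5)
Your proposal is correct and follows essentially the same route as the paper: rule out the rank--2 case via Proposition \ref{step1} to conclude $R$ is invertible, reduce to pairs $(J,\om)$ with $J$ an integrable complex structure and $\om=R^{-1}$ a closed $2$-form of type $(2,0)+(0,2)$, and then enumerate using the classifications of \cite{O1,O2,Snow} up to Lie algebra automorphisms. The one step you assert without justification --- that $(C2)$ is automatic, equivalently that $\mathrm{d}\om_J=0$ follows from the other conditions --- is indeed true in real dimension four, since for the $(2,0)$-form $\Omega=\om-i\om_J$ one has $\mathrm{d}\om=\tfrac12(\bar\partial\Omega+\partial\bar\Omega)$ with the two terms in conjugate bidegrees $(2,1)$ and $(1,2)$, so $\mathrm{d}\om=0$ forces $\mathrm{d}\Omega=0$; the paper instead imports $\mathrm{d}\om_J=0$ as a separate hypothesis from \cite[Proposition 2.6]{C}.
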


	\begin{proof} Note first that, by virtue of Proposition \ref{step1}, a generalized complex structure $(J,R,\si)$ of type 1 on a four dimensional Lie algebra satisfies $\si\not=0$. So a holomorphic generalized complex structure $(J,R)$ on a four dimensional Lie algebra must have $R$ invertible. According to \cite[Proposition 2.6]{C}, a couple $(J,R)$ with $R$ is invertible defines a generalized complex structure on a Lie algebra if and only if
		\[ J^2=-\mathrm{Id}_\G,\; N_J=0,\; J\circ R=R\circ J^*\esp  \mathrm{d}\om=\mathrm{d}\om_J=0, \]where $\om,\om_J\in\wedge^2\G^*$ ar given by $\om(u,v)=\prec R^{-1}(u),v\succ$ and $\om_J(u,v)=\om(Ju,v)$.
	So the determination of  holomorphic generalized complex structures on a Lie algebra $\G$ is equivalent to the determination of the couple $(J,\om)$ where $J$ is a complex structure, $\om$ is a symplectic 2-form such that $\om_J(u,v)=\om(Ju,v)$ is a also a symplectic 2-form. In dimension 4, the classification of invariant complex and symplectic structures is given in \cite{O1, O2, Snow}. According to this study, there are twelve Lie algebras which carries both a complex and symplectic structure, namely, $A_{3,1}\oplus A_1$, $A_{3,6}\oplus A_1$, $A_2\oplus2A_1$, $2A_2$, $A_{4,5}^{-1,1}$, $A_{4,6}^{\al,0}$, $A_{4,7}$, $A_{4,9}^\be$ $(\be\not=1)$, $A_{4,11}^\al$ and $A_{4,12}$. For each Lie algebra in this list, we take the list of the classes of complex structures on the Lie algebra,  we look for the holomorphic symplectic 2-forms and we proceed to the classification up to  automorphisms of Lie algebra. The result of this study leads to the structures listed in the theorem.	
				\end{proof}

Our study, combined with the results in \cite{O1, O2, Snow} lead to the following result.
\begin{co} The  four dimensional Lie algebras which have no generalized complex structure are: $A_{3,2}\oplus A_1$, $A_{3,5}^\al\oplus A_1$ with $0<|\al|<1$, $A_{4,4}$, $A_{4,2}^{\al}$ with $|\al|\not=1$ and  $A_{4,5}^{\al,\be}$ with  $-1<\al<\be<1$ and $\al+\be\not=0$. 
	\end{co}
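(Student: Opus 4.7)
The plan is to combine three classification results: the classification of left invariant complex structures on four dimensional Lie algebras from \cite{O1, O2}, the classification of left invariant symplectic structures from \cite{O1, Snow}, and the classification of type 1 generalized complex structures established in Theorem \ref{type1} (summarized in Tables \ref{8} and \ref{9}). The starting observation, already noted in the introduction, is that in dimension 4 a generalized complex structure has type 0, 1 or 2, that a type 2 structure is equivalent to $K^J$ for some complex structure $J$, and that a type 0 structure is equivalent to $K^\om$ for some symplectic form $\om$. Consequently, a four dimensional Lie algebra $\G$ admits no generalized complex structure if and only if it admits no complex structure, no symplectic structure, and no type 1 generalized complex structure.

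First I would assemble, for each Lie algebra appearing in Tables \ref{1} and \ref{2}, the status of these three existence questions. The complex and symplectic data can be read off directly from the correspondence tables between our notation and the notation of \cite{O1, O2, Snow}, using the dictionary already recorded in Tables \ref{1} and \ref{2}. The type 1 data is obtained from Theorem \ref{type1}, i.e.\ from the list of Lie algebras appearing as the ambient Lie algebra of an entry in Tables \ref{8} and \ref{9}.

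Next I would take the union of the three lists obtained: these are exactly the four dimensional Lie algebras carrying \emph{some} generalized complex structure. The complement, within the Mubarakzyanov--Patera classification reproduced in Tables \ref{1} and \ref{2}, is then computed by direct inspection. The outcome matches the claimed list $A_{3,2}\oplus A_1$, $A_{3,5}^\al\oplus A_1$ with $0<|\al|<1$, $A_{4,4}$, $A_{4,2}^\al$ with $|\al|\neq 1$, and $A_{4,5}^{\al,\be}$ with $-1<\al<\be<1$ and $\al+\be\neq 0$; each of these has to be verified absent from all three source lists.

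The main obstacle is not conceptual but organisational: one must carry out the cross-checking carefully, in particular for the parameter-dependent families $A_{3,5}^\al\oplus A_1$, $A_{4,2}^\al$ and $A_{4,5}^{\al,\be}$, making sure that the excluded parameter ranges line up consistently across the complex, symplectic and type 1 classifications, and that the redundancy between $A_{4,5}^{-1,-1}$ and $A_{4,5}^{-1,1}$ already flagged in the introduction is handled properly.
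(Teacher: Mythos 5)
Your proposal is correct and follows essentially the same route as the paper: the authors likewise reduce the statement to the observation that type 2 and type 0 structures correspond to complex and symplectic structures (classified in \cite{O1, O2, Snow}), combine this with the type 1 classification of Theorem \ref{type1}, and read off the complement from the existence columns of Tables \ref{1} and \ref{2}. The only point to keep in view, which you already flag, is the bookkeeping for the parameter families (in particular the two rows $A_{4,5}^{\al,\be}$ with $\al+\be\not=-1,0$ and $A_{4,5}^{\al,-1-\al}$ must be merged into the single condition $\al+\be\not=0$).
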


	\section{Generalized K\"ahler structures  in four dimensional Lie algebras}\label{section6}
	
	In this section, we classify generalized K\"ahler structures in four dimensional Lie algebras as an important consequence of Theorem \ref{type1}. These structures are in correspondence with left invariant generalized K\"ahler structures in four dimensional simply connected Lie groups. One can see \cite{GaK} for a detailed introduction to generalized K\"ahler geometry.

	A generalized K\"ahler structure on a Lie algebra $\G$ consists of a pair of commuting generalized complex structures $\mathcal{J}_1$ and $\mathcal{J}_2$ such that the symmetric nondegenerate 2-form $G$ given by 
	\[ G(u,v)=\langle \mathcal{J}_1\mathcal{J}_2u,v\rangle=-\langle \mathcal{J}_1u,\mathcal{J}_2v\rangle  \]
	is positive definite, where $\prs$ is the neutral metric on $\G\oplus\G^*$.
If $\mathcal{J}_1=(J_1,R_1,\si_1)$ and $\mathcal{J}_2=(J_2,R_2,\si_2)$ then  $G$ is positive definite if and only if, for any $(X,\xi)\in\G\times\G^*\setminus\{(0,0)\}$,
\begin{equation}\label{G} \prec J_2^*\xi-\si_2(X),J_1X+R_1(\xi)\succ+\prec J_1^*\xi-\si_1(X),J_2X+R_2(\xi)\succ<0. \end{equation}
If $\mathcal{J}_1$ is of type $n=\frac12\dim\G$, i.e., $R_1=0$ then if we take $X=0$ in \eqref{G} we get that, for any $\xi\not=0$
\[ \prec J_1^*\xi,R_2(\xi)\succ<0 \]and hence $R_2$ mus be invertible thus $\mathcal{J}_2$ is of type 0. By applying a $B$-transformation to $\mathcal{J}_1$ and $\mathcal{J}_2$ we can suppose that $J_2=0$. But the fact that $\mathcal{J}_1$ and $\mathcal{J}_2$ commute and $R_2$ invertible will imply that $\si_1=0$. So, up to a $B$-transformation, a generalized K\"ahler structure with one generalized complex structure is of type $n$ is a classical K\"ahler structure. Classical left invariant K\"ahler structure on dimension 4 were classified in \cite{O1}.

	 In dimension four they are some restrictions on the types of $\mathcal{J}_1$ and $\mathcal{J}_2$.
	\begin{pr}\label{pr} Let $\mathcal{J}_1=(J_1,R_1,\si_1)$ and $\mathcal{J}_2=(J_2,R_2,\si_2)$ be a generalized K\"ahler structure on a four dimensional Lie algebra $\G$ with the types of $\mathcal{J}_1$ and $\mathcal{J}_2$ are both different of 2. Then  $\mathcal{J}_1$ and $\mathcal{J}_2$ are either both of type 0  or are both of type 1 and in this case
		\[ \mathrm{Im\;}R_1\cap \mathrm{Im\;}R_2=\{0\}.\]
		
		\end{pr}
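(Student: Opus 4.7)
The plan is to specialize the positivity inequality \eqref{G} at $\xi=0$ and at $X=0$ in order to extract two necessary scalar inequalities, namely
\[ \sigma_1^\flat(X,J_2X)+\sigma_2^\flat(X,J_1X)>0 \quad\text{for every }X\neq 0, \]
and
\[ \prec J_2^*\xi,R_1\xi\succ+\prec J_1^*\xi,R_2\xi\succ<0 \quad\text{for every }\xi\neq 0, \]
where $\sigma_i^\flat(u,v)=\prec\sigma_i(u),v\succ$. These two inequalities, together with the structural Proposition \ref{step1}, supply essentially all the input needed.

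To rule out a mixed type pair, suppose $\mathcal{J}_1$ is of type $0$ and $\mathcal{J}_2$ of type $1$. Since every type-$0$ generalized complex structure is equivalent to some $K^\om$ and the transformations $\phi(A)$ and $\exp(B)$ of \eqref{t} act as $\langle\,,\,\rangle$-isometries on $\Phi(\G)$, their simultaneous application to the pair preserves both the commutation of $\mathcal{J}_1$ with $\mathcal{J}_2$ and the positivity of $G$; we may therefore assume $J_1=0$ and $\sigma_1=\om$ for a symplectic $\om$. The first inequality above then reads $\om(X,J_2X)>0$ for every $X\neq 0$, so $\om$ tames $J_2$. But by Proposition \ref{step1} the restriction of $J_2$ to $\h_2=\mathrm{Im\;}R_2$ equals $\la\,\mathrm{Id}_{\h_2}$ for some $\la\in\R$; picking $0\neq X\in\h_2$ and using the skew-symmetry of $\om$ gives $\om(X,J_2X)=\la\om(X,X)=0$, a contradiction. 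The case $(1,0)$ is identical by symmetry. Combined with the paragraph immediately preceding the proposition, which handles type $2$, this forces the pair of types to be $(0,0)$ or $(1,1)$.

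For the remaining assertion, assume both structures are of type $1$. Any $\xi\in\ker R_1\cap\ker R_2$ makes both terms of the second inequality vanish, so the strict inequality forces $\xi=0$. Since $R_i$ is skew-symmetric, $\ker R_i=\h_i^{0}$, hence $\h_1^0\cap\h_2^0=\{0\}$; taking annihilators yields $\h_1+\h_2=\G$, and since $\dim\h_1=\dim\h_2=2$ and $\dim\G=4$ a dimension count gives $\mathrm{Im\;}R_1\cap\mathrm{Im\;}R_2=\{0\}$.

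The only delicate step is the legitimacy of the reduction to $\mathcal{J}_1=K^\om$: it relies on the fact that $\phi(A)$ and $\exp(B)$ are $\langle\,,\,\rangle$-isometries, so conjugation by them sends any generalized K\"ahler pair to another generalized K\"ahler pair preserving types, commutation and positivity. The rest is routine linear algebra on the normal form supplied by Proposition \ref{step1}.
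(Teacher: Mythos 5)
Your argument is correct, and for the exclusion of a mixed pair of types it coincides with the paper's: both reduce the type-$0$ structure to the symplectic normal form by a $B$-transformation (justified exactly as you say, by the fact that $\phi(A)$ and $\exp(B)$ are isometries of $\prs$ preserving the Courant bracket, hence send generalized K\"ahler pairs to generalized K\"ahler pairs of the same types), and then evaluate \eqref{G} at $\xi=0$ on a nonzero $X\in\mathrm{Im\;}R$ of the type-$1$ structure, where Proposition \ref{step1} makes $J$ scalar and skew-symmetry kills the expression. Where you genuinely diverge is the $(1,1)$ intersection claim: the paper evaluates \eqref{G} at $\xi=0$ on $X\in\mathrm{Im\;}R_1\cap\mathrm{Im\;}R_2$, again invoking $J_iX=\la_iX$ from Proposition \ref{step1} to see the expression vanish, and concludes directly that the intersection is trivial; you instead evaluate at $X=0$ on $\xi\in\ker R_1\cap\ker R_2$, where the expression vanishes for free, obtain $\h_1^0\cap\h_2^0=\{0\}$, and recover $\h_1\cap\h_2=\{0\}$ by duality and a dimension count. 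Your dual version has the small advantage of not using the scalar restriction of $J_i$ on $\mathrm{Im\;}R_i$ for this step, at the cost of needing $\dim\h_1=\dim\h_2=2$ and $\dim\G=4$ explicitly; the paper's version is dimension-free but leans on Proposition \ref{step1}. Both are sound, and the remaining bookkeeping (skew-symmetry of $R_i$ giving $\ker R_i=\h_i^0$) is carried out correctly.
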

	
	\begin{proof} Suppose that $\mathcal{J}_1$ and $\mathcal{J}_2$ are not both of type 0.  
		We distinguish two cases.
		
		\begin{enumerate}\item $\mathcal{J}_1$ and $\mathcal{J}_2$ are both of type 1. Then if $X\in\mathrm{Im\;}R_1\cap \mathrm{Im\;}R_2$ then, according to Proposition \ref{step1}, $J_1X=\la_1 X$ and $J_2X=\la_2X$ and if we replace in \eqref{G} with $\xi=0$, we get
			\[ -\prec\si_2(X),J_1X\succ-\prec\si_1(X),J_2X\succ=0 \] and hence $X=0$ and $\mathrm{Im\;}R_1\cap \mathrm{Im\;}R_2=\{0\}$.
			\item $\mathcal{J}_1$ of type 1 and $\mathcal{J}_2$ of type 0. By applying a $B$-transformation, we can suppose that $J_2=0$ and $\si_2=-R_2^{-1}$. If we take $X\in\mathrm{Im\;}R_1$ and $\xi=0$ and we replace in \eqref{G}, by virtue of Proposition \ref{step1} $J_1X=\la_1X$, we get
			\[ -\prec \si_2(X),J_1X\succ=-\la_1\prec \si_2(X),X\succ=0<0 \]
			which is a contradiction.\qedhere
			\end{enumerate}
		
		\end{proof}
		
			The following theorem gives a complete classification of generalized K\"ahler structures of type (1,1) on four dimensional Lie algebras.
			\begin{theo}\label{type11} Let $(\mathcal{J}_1,\mathcal{J}_2)$ be a generalized K\"ahler structure on a four dimensional Lie algebra such that both $\mathcal{J}_1$ and $\mathcal{J}_2$ are of type 1. Then $(\G,\mathcal{J}_1,\mathcal{J}_2)$ is isomorphic to one of the following Lie algebra with the given generalized K\"ahler structure:
				\begin{enumerate} \item $A_{3,6}\oplus A_1$:
					\[ \mathcal{J}_1=(E_{12}-E_{21},-f_{34}^\#,-f^{34}_\#)\esp
					\mathcal{J}_2=(\rho E_{43}-\frac1\rho E_{34},f_{12}^\#,f^{12}_\#),\quad\rho>0.  \]
					
					\item  $A_{2}\oplus 2A_1$:
					\[ \mathcal{J}_1=(E_{34}-E_{43},-f_{12}^\#,-f^{12}_\#)\esp\mathcal{J}_2=(\rho E_{21}-\frac1\rho E_{12},f_{34}^\#,f^{34}_\#)
					,\quad\rho>0.  \]
					
					\item  $2A_{2}$:
					\[ \mathcal{J}_1=(E_{34}-E_{43},-f_{12}^\#,-f^{12}_\#)\esp \mathcal{J}_2=(\rho E_{21}-\frac1\rho E_{12},rf_{34}^\#,\frac1rf^{34}_\#)
					,\quad\rho,r>0.  \]
					
					\item  $A_{4,6}^{\al,0}$:
					\[ \mathcal{J}_1=(E_{23}-E_{32},-f_{14}^\#,-f^{14}_\#)\esp \mathcal{J}_2=(\rho E_{41}-\frac1\rho E_{14},f_{23}^\#,f^{23}_\#)
					,\quad\rho>0.  \]

					\end{enumerate}
				
				\end{theo}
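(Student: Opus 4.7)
The argument will proceed in three stages. First, invoke Proposition \ref{pr}: since both $\mathcal{J}_i = (J_i, R_i, \sigma_i)$ are of type $1$, the images $\h_i = \mathrm{Im}\,R_i$ are two-dimensional and satisfy $\h_1 \cap \h_2 = \{0\}$, giving a direct sum decomposition $\G = \h_1 \oplus \h_2$. Using the block form of the commutativity $\mathcal{J}_1 \mathcal{J}_2 = \mathcal{J}_2 \mathcal{J}_1$, in particular the relation $\sigma_1 J_2 - J_1^*\sigma_2 = \sigma_2 J_1 - J_2^*\sigma_1$, together with the fact from Proposition \ref{step1} that $J_i|_{\h_i} = \lambda_i \mathrm{Id}$, I plan to show that $\sigma_1$ vanishes on $\h_2$ and $\sigma_2$ vanishes on $\h_1$. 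By dimension this forces $\ker\sigma_1 = \h_2$ and $\ker\sigma_2 = \h_1$, so the splitting $\G = \h_1 \oplus \h_2$ is simultaneously adapted to both generalized complex structures.

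Second, I choose an adapted basis $(f_1,f_2,f_3,f_4)$ with $\h_2 = \mathrm{span}(f_1,f_2)$ and $\h_1 = \mathrm{span}(f_3,f_4)$. After rescaling one can arrange $R_1 = \varepsilon_1 f_{34}^{\#}$ and $R_2 = \varepsilon_2 f_{12}^{\#}$ with $\varepsilon_i \in \{\pm 1\}$; the positivity condition \eqref{G} evaluated on $X \in \h_i$ with $\xi = 0$ then determines the signs $\varepsilon_1,\varepsilon_2$ and also imposes sign/magnitude constraints on $\lambda_1,\lambda_2$. A $B$-transformation with $B$ block-diagonal with respect to the splitting $\h_1 \oplus \h_2$ affects only $J_i|_{\h_i}$, so I can use it to reduce to $\lambda_1 = \lambda_2 = 0$; such a $B$ is automatically a $2$-cocycle because the bracket interacts with $\h_1, \h_2$ in the controlled way prescribed by Proposition \ref{step1}.

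Third, I apply Proposition \ref{step1} twice — once with the roles of $(\h,\p)$ played by $(\h_1,\h_2)$ and once with those roles reversed — obtaining two instances of the Jacobi system \eqref{S}. Intersecting these two systems and adjoining the commutativity equations $J_1 R_2 = R_2 J_1^*$ and $\sigma_1 J_2 = J_2^* \sigma_1$ (which come out of the block expansion of $[\mathcal{J}_1,\mathcal{J}_2]=0$ after using $\ker\sigma_i = \h_j$) yields a highly constrained set of equations on the structure constants $a_i, b_i, x_i, y_i, p_i, q_i, r_i$. A case analysis, driven by which of these constants vanish, produces a short list of admissible Lie bracket configurations, which I identify using Tables \ref{10}--\ref{12} as $A_{3,6}\oplus A_1$, $A_2 \oplus 2A_1$, $2A_2$, and $A_{4,6}^{\alpha,0}$. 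For each Lie algebra, the remaining automorphisms (from \cite{P}) and $2$-cocycles (Table \ref{13}) are used to bring the pair $(\mathcal{J}_1,\mathcal{J}_2)$ into the canonical form listed in the statement.

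The main obstacle is this last step: the combined Jacobi-plus-commutativity system is large and the case analysis must be exhaustive without double counting, very much in the spirit of the Appendix computations supporting Theorem \ref{type1}. A secondary difficulty is to prove that the surviving parameter $\rho$ (and $r$ in the $2A_2$ case) are genuine moduli: I plan to do this by exhibiting an invariant of the equivalence relation (for instance an eigenvalue of a suitable composition built from $\mathcal{J}_1\mathcal{J}_2$) which depends nontrivially on $\rho$ and $r$ and is preserved under both $\phi(A)$ and $\exp(B)$. The fact that only the $2A_2$ case exhibits the extra modulus $r$ reflects the smaller automorphism group of $2A_2$, which cannot simultaneously normalize the two independent pairings on $\h_1$ and $\h_2$.
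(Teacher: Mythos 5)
Your strategy is genuinely different from the paper's. The paper does not rebuild anything from the splitting $\G=\h_1\oplus\h_2$: it treats Theorem \ref{type1} as the main input, scans Tables \ref{8} and \ref{9}, and eliminates almost every Lie algebra by one of two cheap criteria — (i) if every type-1 structure on $\G$ is Calabi--Yau, then by Proposition \ref{cy} $[\G,\G]\subset\mathrm{Im\;}R$ for all of them, so $\mathrm{Im\;}R_1\cap \mathrm{Im\;}R_2\not=\{0\}$, contradicting Proposition \ref{pr}; (ii) if $\G$ carries a single equivalence class $(J_0,R_0,\si_0)$, then $\mathrm{Im\;}R_2=T(\mathrm{Im\;}R_0)$ for some automorphism $T$ (since $B$-transformations do not move $R$), and one checks on the automorphism groups that $T(\mathrm{Im\;}R_0)\cap\mathrm{Im\;}R_0\not=\{0\}$. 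Only six algebras survive, and for those the paper fixes $\mathcal{J}_1$ in its tabulated normal form and computes its commutant directly. Your plan instead re-derives the admissible Lie algebras from scratch by applying Proposition \ref{step1} twice on the decomposition $\G=\h_1\oplus\h_2$. That is a legitimate and more self-contained route, but it forgoes the information already encoded in Theorem \ref{type1} and therefore commits you to a computation comparable in size to the Appendix.

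Two of your steps are genuine gaps rather than routine verifications. First, the identity $\ker\si_1=\h_2$ is not automatic from $\h_1\cap\h_2=\{0\}$: a priori $\h_2$ could meet $\p_1=\ker\si_1$ in a line only, and your one-line appeal to the $(2,1)$ block of $[\mathcal{J}_1,\mathcal{J}_2]=0$ together with $J_i|_{\h_i}=\la_i\,\mathrm{Id}$ does not obviously close; you will most likely also need the $(1,1)$ and $(1,2)$ blocks and the positivity \eqref{G}. Everything downstream (the double use of Proposition \ref{step1}, the block-diagonal $B$-transformation being a $2$-cocycle, which relies on $[\h_i,\h_i]\subset\h_i$ for \emph{both} summands) depends on this lemma. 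Second, and more seriously, the entire content of the theorem sits in your third stage, which is stated as an outcome ("produces a short list") rather than derived: intersecting two copies of \eqref{S} with the commutativity and positivity conditions is a large case analysis, and it must in particular correctly \emph{exclude} $A_{4,5}^{1,1}$ and $A_{4,12}$, which do admit two type-1 classes with transverse images and are ruled out in the paper only by a direct computation at the very end. Until those two steps are actually executed, this is a plausible program rather than a proof. Your closing remarks on exhibiting an invariant to show that $\rho$ (and $r$ for $2A_2$) are genuine moduli are sound, and in fact address a point the paper leaves implicit.
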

				
				\begin{proof} We scroll  Tables \ref{8} and \ref{9} and proceed by a case by case approach. There are two situations where we can say that the corresponding Lie algebra has no generalized K\"ahler structure of type (1,1).    
				\begin{enumerate}	
					\item $\G$ has only Calabi-Yau generalized complex structures which is equivalent, according to Proposition \ref{cy}, to $[\G,\G]\subset\mathrm{Im\;}R$. This implies that, for any couple of generalized complex structures $(J_1,R_1,\si_1)$ and $(J_2,R_2,\si_2)$ on $\G$, $\mathrm{Im\;}R_1\cap\mathrm{Im\;}R_2\not=\{0\}$ and hence, by virtue of Proposition \ref{pr}, $\G$ has no generalized K\"ahler structure of type $(1,1)$. In this way, we show that $A_{3,1}\oplus A_1$, $A_{3,4}\oplus A_1$ and $A_{4,1}$ have no generalized K\"ahler structure of type $(1,1)$.
					\item $\G$ has a unique class of generalized complex structure $(J_0,R_0,\si_0)$. So if $(\mathcal{J}_1,\mathcal{J}_2)$ is a generalized K\"ahler structure on $\G$ then we can take  $\mathcal{J}_1=(J_0,R_0,\si_0)$ and $\mathcal{J}_2$ is conjugate to $(J_0,R_0,\si_0)$ by a sequence of $B$-transformations and automorphism transformations. Since the $B$-transformations doesn't affect $R$ we get that $\mathrm{Im\;}R_2=T(\mathrm{Im\;}R_0)$ with $T$ is an automorphism of $\G$ and hence, by virtue of Proposition \ref{pr}, we must have 
					$T(\mathrm{Im\;}R_0)\cap \mathrm{Im\;}R_0=\{0\}$. For instance, if take $\G=A_{33}\oplus A$, $\mathrm{Im\;}R_0=\mathrm{span}\{e_3,e_4\}$ and one can check in \cite{P} that any automorphism $T$ of $\G$ satisfies $T(e_4)=a_6 e_4$ and hence $T(\mathrm{Im\;}R_0)\cap \mathrm{Im\;}R_0\not=\{0\}$. By looking at the effect on the automorphisms on $\mathrm{Im\;}R_0$, we can show that, apart form $A_{3,6}\oplus A_1$, $A_2\oplus 2A_1$, $2A_2$, $A_{4,5}^{1,1}$, $A_{4,6}^{\al,0}$ and $A_{4,12}$, all the Lie algebras in Table \ref{8} and \ref{9} have no generalized K\"ahler structure of type $(1,1)$. 
					
					\end{enumerate}
					We are left with the Lie algebras $A_{3,6}\oplus A_1$, $A_2\oplus 2A_1$, $2A_2$, $A_{4,5}^{1,1}$, $A_{4,6}^{\al,0}$ and $A_{4,12}$ for which we will devote a special treatment. We give the details for $A_{3,6}\oplus A_1$.
					
					From Table \ref{9}, $A_{3,6}\oplus A_1$ has two classes of generalized complex structures with $\mathrm{Im\;}R=\mathrm{span}\{f_3,f_4\}=\h_1$ or $\mathrm{Im\;}R=\mathrm{span}\{f_1,f_2\}=\h_2$. Any automorphism $T$ of $A_{3,6}\oplus A_1$ satisfies $T(\h_2)=\h_2$ and $T(\h_1)\cap\h_1\not=\{0\}$. So
					If $(\mathcal{J}_1,\mathcal{J}_2)$ is a generalized K\"ahler structure on $A_{3,6}\oplus A_1$ then we can suppose that
					\[ \mathcal{J}_1=(E_{12}-E_{12},-f_{34}^\#,-f^{34}_\#),  R_2=r f^{12}_\#,\; J_2(f_1)=\la f_1\esp  J_2(f_2)=\la f_2. \]Write
					\[ J_2=\left[ \begin {array}{cccc} \lambda&0&\mu_{{1,3}}&\mu_{{1,4}}
					\\ \noalign{\medskip}0&\lambda&\mu_{{2,3}}&\mu_{{2,4}}
					\\ \noalign{\medskip}0&0&\mu_{{3,3}}&\mu_{{3,4}}\\ \noalign{\medskip}0
					&0&\mu_{{4,3}}&\mu_{{4,4}}\end {array} \right] 
					\esp \si_2=\left[ \begin {array}{cccc} 0&b_{{1,2}}&b_{{1,3}}&b_{{1,4}}
					\\ \noalign{\medskip}-b_{{1,2}}&0&b_{{2,3}}&b_{{2,4}}
					\\ \noalign{\medskip}-b_{{1,3}}&-b_{{2,3}}&0&b_{{3,4}}
					\\ \noalign{\medskip}-b_{{1,4}}&-b_{{2,4}}&-b_{{3,4}}&0\end {array}
					\right]. 
					 \]The relation $\mathcal{J}_1\mathcal{J}_2=\mathcal{J}_2\mathcal{J}_1$ is equivalent to $J_2=\la(E_{11}+E_{22})+\mu_{4,4}(E_{44}-E_{33})+\mu_{3,4}E_{34}+\mu_{4,3}E_{4,3}$ and $\si_2=-b_{1,2}f^{12}_\#$.
					 Moreover, $\mathcal{J}_2$ is integrable if and only if $b_{1,2}=\frac{\la^2+1}r$ and $\mu_{3,4}=-\frac{1+\mu_{4,4}^2}{\mu_{4,3}}$.
					  
					  On the other hand, the 2-cocycle $B=\frac\la{r}f^{12}_\#$ and the automorphism $T=\frac1{\sqrt{-r}}\left( E_{11}+E_{22}  \right)+E_{33}+E_{44}+\frac{\mu_{4,4}\mu_{4,3}}{\mu_{4,4}^2+1}E_{43}$ satisfy
					  \[\begin{cases} \phi(T)\exp(B)\mathcal{J}_1\exp(-B)\phi(T^{-1})=
					  \mathcal{J}_1,\\ \phi(T)\exp(B)\mathcal{J}_2\exp(-B)\phi(T^{-1})=
					  \left(\rho E_{4,3}-\frac1\rho E_{3,4}  ,f_{12}^\#,f^{12}_\#     \right)=\mathcal{J}_3,\end{cases}  \]where $\rho=\frac{\mu_{4,3}}{\mu_{4,4}^2+1}$.
					  So $\mathcal{J}_3$ is a generalized complex structure which commutes with $\mathcal{J}_1$ and one can check that the metric $\langle \mathcal{J}_1\mathcal{J}_3.,.\rangle$ is given by
					  \[ G=f_1\otimes f_1+f^1\otimes f^1+f_2\otimes f_2+f^2\otimes f^2+\frac{\mu_{4,3}}{\mu_{4,4}^2+1}(f_4\otimes f_4+f^3\otimes f^3)+\frac{\mu_{4,4}^2+1}{\mu_{4,3}}(f_3\otimes f_3+f^4\otimes f^4). \]This achieves this case. The cases $A_2\oplus 2A_1$,  $2A_2$ and $A_{4,6}^{\al,0}$ are treated in a similar way. For $A_{4,5}^{1,1}$ and $A_{4,12}$ a direct computation shows that these Lie algebras have no generalized K\"ahler structure of type $(1,1)$.
					  \end{proof}
					  
	As shown in \cite{GaK}, a generalized K\"ahler structure gives rise to a bihermitian	metric, i.e., a Riemannian metric $g$ and two integrable complex structures $I_+$ and $I_-$ Hermitian with respect to $g$. Let us give the associated bihermitian metrics to the generalized K\"ahler structures given in Theorem \ref{type11}.
	\begin{enumerate}\item For $A_{3,6}\oplus A$, $(g,I_+,I_-)$ are given by
		\[ g=2 f^1\otimes f^1+2 f^2\otimes f^2+2\rho f^3\otimes f^3+\frac2\rho f^4\otimes f^4,\; I_+=E_{12}-E_{21}+\frac1\rho E_{34}-\rho E_{43}\esp I_-=-I_+. \]
		Moreover, $(g,I_+)$ is a K\"ahler structure and $g$ is flat.
		
		\item For $A_{2}\oplus 2A_1$, $(g,I_+,I_-)$ are given by
		\[ g=2\rho f^1\otimes f^1+\frac2\rho f^2\otimes f^2+2 f^3\otimes f^3+2 f^4\otimes f^4,\; I_+=\frac1\rho E_{12}-\rho E_{21}+ E_{34}- E_{43}\esp I_-=-I_+. \]
		Moreover, $(g,I_+)$ is a K\"ahler structure and the Ricci operator of $g$ is given by $\mathrm{Ric}=-\frac1{2\rho}\left(E_{11}+E_{22} \right)$ and the Ricci curvature is nonpositive.
		
		\item For $2A_{2}$, $(g,I_+,I_-)$ are given by
		\[ g=2\rho f^1\otimes f^1+\frac2\rho f^2\otimes f^2+\frac2r f^3\otimes f^3+\frac2r f^4\otimes f^4,\; I_+=\frac1\rho E_{12}-\rho E_{21}+ E_{34}-E_{43}\esp I_-=-I_+. \]
		Moreover, $(g,I_+)$ is a K\"ahler structure and the Ricci operator of $g$ is given by $\mathrm{Ric}=-\frac1{2\rho}\left(E_{11}+E_{22} \right)-\frac{r}2\left(E_{33}+E_{44} \right)$. In particular, if $r=\frac1\rho$ then $g$ is Einstein with negative scalar curvature.
		\item For $A_{4,6}^{\al,0}$, $(g,I_+,I_-)$ are given by
		\[ g=2\rho f^1\otimes f^1+ f^2\otimes f^2+ f^3\otimes f^3+\frac2\rho f^4\otimes f^4,\; I_+=\frac1\rho E_{14}-\rho E_{41}+ E_{23}-E_{32}\esp I_-=-I_+. \]
		Moreover, $(g,I_+)$ is a K\"ahler structure and the Ricci operator of $g$ is given by $\mathrm{Ric}=-\frac{\rho\al^2}{2}\left(E_{11}+E_{44} \right)$ and and the Ricci curvature is nonpositive.
		
		\end{enumerate}
	
		It is known that a four dimensional generalized K\"ahler manifold of type $(0,0)$ has a non trivial holomorphic Poisson tensor \cite{GaK}. So in our case, the only four dimensional Lie algebras which can carry a generalized K\"ahler structure of type $(0,0)$ are those given in Theorem \ref{holom}.  By a case by case approach and a direct computation, one can show no one of them has a generalized complex structure of type $(0,0)$. This prove the following result.
		\begin{theo}\label{type0} There is no generalized K\"ahler structure $(\mathcal{J}_1,\mathcal{J}_2)$ on a four dimensional Lie algebra such that both $\mathcal{J}_1$ and $\mathcal{J}_2$ are of type 0.
			
		\end{theo}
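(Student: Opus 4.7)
The plan is a two-step reduction followed by a direct case-by-case verification.

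\textbf{Step one (reduction to four candidates).} I would invoke the known fact from generalized K\"ahler geometry (see \cite{GaK}) that on any four-dimensional generalized K\"ahler structure of type $(0,0)$ the commutator $[\mathcal{J}_1,\mathcal{J}_2]$, combined with the bihermitian metric, produces a non-trivial holomorphic Poisson tensor with respect to each of the induced complex structures $I_\pm$. Transported to the left-invariant setting on a Lie algebra $\mathfrak{g}$, this forces $\mathfrak{g}$ to admit a non-trivial left-invariant holomorphic Poisson tensor. Theorem \ref{holom} then says that $\mathfrak{g}$ must be one of $A_{3,1}\oplus A_1$, $A_{4,5}^{-1,1}$, $A_{4,9}^{-1/2}$ or $A_{4,12}$, and these are the only cases to treat.

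\textbf{Step two (normalization and commutation).} On each of these four algebras a type $0$ generalized complex structure has invertible $R$ and, by the general fact recalled in Section \ref{section2}, is equivalent via a $B$-transformation to some $K^{\omega}$ for a symplectic 2-form $\omega$. Applying one simultaneous $B$-transformation to the pair, I would therefore take $\mathcal{J}_1=K^{\omega_1}$, where $\omega_1$ ranges over the finite list of classes of invariant symplectic forms from \cite{O1, O2, Snow} on the algebra in question. Writing $\mathcal{J}_2=(J_2,R_2,\sigma_2)$ with $R_2$ invertible, the commutation $\mathcal{J}_1\mathcal{J}_2=\mathcal{J}_2\mathcal{J}_1$ amounts to the linear constraints $\omega_1 J_2+J_2^*\omega_1=0$ and $\sigma_2=-\omega_1R_2\omega_1$, together with $R_2\omega_1R_2=\omega_1^{-1}$, which cut the parameters of $\mathcal{J}_2$ down to a few real numbers.

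\textbf{Step three (failure of positivity).} With the reduced parametrization in hand, I would impose the integrability conditions $(C0)$--$(C4)$ of Proposition \ref{pr2} on $\mathcal{J}_2$ using the explicit brackets of the algebra and the data of Theorem \ref{holom}, and then write out the bilinear form $G(X+\xi,Y+\eta)=\langle\mathcal{J}_1\mathcal{J}_2(X+\xi),Y+\eta\rangle$ and the inequality \eqref{G}. In each of the four cases, the expectation is that the commutation forces $R_2$ to be proportional to $R_1$ (so $\mathcal{J}_2=\pm\mathcal{J}_1$ up to a $B$-transformation and $G$ degenerates), or the restriction of $G$ to $\mathfrak{g}$ exhibits a null or negative direction coming from the Lie-bracket constraints. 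The main obstacle will be bookkeeping rather than conceptual: the computation is of the same flavor as those performed for Theorem \ref{type11}, and each case dispatches by a short Maple-assisted elimination, but one has to run through the (several) classes of symplectic form $\omega_1$ for each of the four algebras and verify non-positivity of $G$ in every branch.
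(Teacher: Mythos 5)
Your proposal follows essentially the same route as the paper: the paper's entire argument is the paragraph preceding the theorem, namely that a type $(0,0)$ generalized K\"ahler structure forces a non-trivial invariant holomorphic Poisson tensor (citing \cite{GaK}), so only the four algebras of Theorem \ref{holom} need be examined, and a direct case-by-case computation rules each of them out. Your Steps two and three just spell out that computation in more detail; only note that $[\mathcal{J}_1,\mathcal{J}_2]=0$ by definition of a generalized K\"ahler pair (the Poisson tensor comes from $[I_+,I_-]$ and the metric), and that the constraint $R_2\omega_1R_2=\omega_1^{-1}$ only holds when $J_2=0$ — the correct relation from $(C0)$ is $J_2^2=-\mathrm{Id}+R_2\omega_1R_2\omega_1$.
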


\section{Cohomologies of generalized complex structures of type 1 on four dimensional Lie algebras}\label{section4}

In this section, we compute for each class of generalized complex structure of type 1 obtained in the last section its different cohomologies. 

Let us start by recalling the definitions of these cohomologies (see \cite{Ga} for the details). Recall that a generalized complex structure $K=(J,R,\si)$ on a Lie algebra $\G$ of dimension $2n$ is entirely determined by a pure spinor $\rho\in\wedge^\bullet\G^*\otimes\mathbb{C}$ such that
\[ \mathrm{L}:=\left\{X+\xi-iK(X+\xi),X+\xi\in\Phi(\G) \otimes\mathbb{C}  \right\}=\left\{X+\xi\in \Phi(\G) \otimes\mathbb{C},(X+\xi).\rho=i_X\rho+\xi\wedge\rho=0   \right\}. \]
For $k\in\{0,\ldots,2n  \}$, we define the vector subspaces $\mathrm{U}_{-n+k}\subset\wedge^\bullet\G^*\otimes\mathbb{C}$ by 
\[ \mathrm{U}_{-n}=\langle \rho\rangle \esp \mathrm{U}_{-n-k}=\wedge^k\overline{\mathrm{L}}.\mathrm{U}_{-n}, \]where $\overline{\mathrm{L}}$ is the conjugate of $\mathrm{L}$. This family of vector spaces defines a $\mathbb{Z}$-graduation of $\wedge^\bullet\G^*\otimes\mathbb{C}$, i.e.,
\[ \wedge^\bullet\G^*\otimes\mathbb{C}=\mathrm{U}_{-n}\oplus\ldots\oplus \mathrm{U}_{n}. \]Moreover, $\mathrm{U}_{l}=\overline{\mathrm{U}_{-l}}$ for any $l\in\{0,\ldots,n\}$ and the differential $\mathrm{d}$ satisfies $\mathrm{d}\mathrm{U}_{\bullet}\subset \mathrm{U}_{\bullet-1}+\mathrm{U}_{\bullet+1}$. So $\mathrm{d}=\partial+\bar{\partial}$ where
\[ \partial:\mathrm{U}_{\bullet}\too \mathrm{U}_{\bullet+1}\esp  \bar{\partial}:\mathrm{U}_{\bullet}\too \mathrm{U}_{\bullet-1},\]
$\partial\circ\partial=0$,  $\bar\partial\circ\bar\partial=0$ and $\partial\circ\bar\partial+\bar\partial\circ\partial=0$. Associated to these differential complexes, there are three cohomologies:
\begin{eqnarray*}
	\mathrm{GH^\bullet_{\partial}}&=&\frac{\ker(\partial:\mathrm{U}_\bullet\too \mathrm{U}_{\bullet+1}) }{\mathrm{Im}(\partial:\mathrm{U}_{\bullet-1}\too \mathrm{U}_\bullet )},\\
	\mathrm{GH^\bullet_{BC}}&=&\frac{\ker(\partial: \mathrm{U}_\bullet\too \mathrm{U}_{\bullet+1})
		\cap \ker(\bar\partial: \mathrm{U}_\bullet\too \mathrm{U}_{\bullet-1})}{\mathrm{Im}(\partial\bar\partial:\mathrm{U}_\bullet\too \mathrm{U}_\bullet)},\\
	\mathrm{GH}^\bullet_A&=&\frac{\ker(\partial\bar\partial: \mathrm{U}_\bullet\too \mathrm{U}_\bullet)  }{\mathrm{Im}(\partial:\mathrm{U}_{\bullet-1}\too \mathrm{U}_\bullet )+\mathrm{Im}(\bar\partial:\mathrm{U}_{\bullet+1}\too \mathrm{U}_\bullet )}.
	\end{eqnarray*}They are known as generalized Dolbeault cohomology, generalized Bott-Chern cohomology and generalized Aeppli cohomology. The cohomology $\mathrm{GH^\bullet_{\bar\partial}}$ can be obtained from $\mathrm{GH^\bullet_{\partial}}$ and the relations $\mathrm{U}_{l}=\overline{\mathrm{U}_{-l}}$.

Let us give these three cohomologies for the generalized complex structures obtained in Tables \ref{8} and \ref{9}. The computation is straightforward, we give the details for one example and for the others we give only $\rho,\overline{\mathrm{L}},\mathrm{U}_{-1},\mathrm{U}_0$ and the cohomologies. We have $\mathrm{U}_{-2}=\langle\rho\rangle$, $\mathrm{U}_{2}=\langle\overline{\rho}\rangle$, $\mathrm{U}_{1}=\overline{\mathrm{U}_{-1}}$, $\dim\mathrm{U}_{1}=4$ and $\dim\mathrm{U}_{0}=6$. Each one of these vector spaces is given by a family of generators, $\mathrm{U}_{-1}=\langle \mathrm{U}_{-1}^1,\ldots,\mathrm{U}_{-1}^4\rangle$, $\mathrm{U}_{1}=\langle \mathrm{U}_{1}^1,\ldots,\mathrm{U}_{1}^4\rangle$ and $\mathrm{U}_{0}=\langle \mathrm{U}_{0}^1,\ldots,\mathrm{U}_{0}^6\rangle$.

Note that 
\[ \mathrm{GH}^{-2}_\partial=\begin{cases}0\;\mbox{if}\;d\rho\not=0\\\mathrm{U}_{-2}\;\mbox{if}\;d\rho=0\end{cases},\mathrm{GH}^{2}_\partial=\begin{cases}0\;\mbox{if}\;\mathrm{Im}\overline{\partial}_{-1}\not=0\\\mathrm{U}_{2}\;\mbox{if}\;\mathrm{Im}\overline{\partial}_{-1}=0\end{cases},\mathrm{GH}^{-2}_{BC}=\begin{cases}0\;\mbox{if}\;d\rho\not=0\\\mathrm{U}_{-2}\;\mbox{if}\;d\rho=0\end{cases} \]
\[\mathrm{GH}^{2}_{BC}=\begin{cases}0\;\mbox{if}\;d\rho\not=0\\\mathrm{U}_{2}\;\mbox{if}\;d\rho=0\end{cases}, \mathrm{GH}^{-2}_A=\begin{cases}0\;\mbox{if}\;\mathrm{Im}\overline{\partial}_{-1}\not=0\\\mathrm{U}_{-2}\;\mbox{if}\;\mathrm{Im}\overline{\partial}_{-1}=0\end{cases} 
\esp \mathrm{GH}^{2}_A=\begin{cases}0\;\mbox{if}\;\mathrm{Im}\overline{\partial}_{-1}\not=0\\\mathrm{U}_{2}\;\mbox{if}\;\mathrm{Im}\overline{\partial}_{-1}=0.\end{cases}\]

\begin{exem}We begin by the first example in Table \ref{8}. 
We have
\begin{align*}
\rho&=	{f}^{2}+i{f}^{3}- \left( i\lambda+1 \right) {f}^{134}+ \left( i-\lambda
\right) {f}^{124},\;\mathrm{d}\rho=0,\\
\overline{\mathrm{L}}&=\langle i \left( -{\lambda}^{2}-1 \right){f}^{4} + \left( 1+i\lambda
\right)f_{{1}} ,if_{{2}}+f_{{3}}, \left( 1-i\lambda \right)f^1 -if_{{4}},{f}^{3}+i{f}^{2}\rangle,\\
\mathrm{U}_{-1}&=\langle \left( -2\,\lambda+2\,i \right) {f}^{24}+ \left( -2\,i\lambda-2
\right) {f}^{34},
2\,i+ \left( 2\,i\lambda+2 \right) {f}^{14},
2\,{f}^{12}+2\,i{f}^{13},
-2\,{f}^{23}+ \left( -2\,\lambda+2\,i \right) {f}^{1234}
\rangle\\
\mathrm{U}_0&=\langle  \left( 4\,i\lambda+4 \right) {f}^{4},
\left( 2\,i\lambda+2 \right) {f}^{2}+ \left( -2\,\lambda+2\,i \right) 
{f}^{3}+ \left( 2\,{\lambda}^{2}+2 \right) {f}^{134}+ \left( -2\,i-2\,i
{\lambda}^{2} \right) {f}^{124},\\
& \left( 4\,i-4\,\lambda \right) {f}^{234},
-4\,if^1,
2\,{f}^{2}-2\,i{f}^{3}+ \left( 2\,i\lambda+2 \right) {f}^{134}+
\left( -2\,\lambda+2\,i \right) {f}^{124},
-4\,{f}^{123}\rangle,\\
\mathrm{d}\mathrm{U}_{-1}^1&=\mathrm{d}\mathrm{U}_{-1}^3=\mathrm{d}\mathrm{U}_{-1}^4=0,\;\mathrm{d}\mathrm{U}_{-1}^2= \left( -2\,i\lambda-2 \right) {f}^{234},	\\
\mathrm{d}\mathrm{U}_0^1&=\mathrm{d}\mathrm{U}_0^2=\mathrm{d}\mathrm{U}_0^3=
\mathrm{d}\mathrm{U}_0^5=\mathrm{d}\mathrm{U}_0^6=0,\; \mathrm{d}\mathrm{U}_0^4=4\,i{f}^{23}.
\end{align*}

Thus
\[\begin{cases} \ker\partial_{-2}=\ker{\bar{\partial}}_{-2}=\mathrm{U}_{-2}\esp \mathrm{Im}\partial_{-2}=\mathrm{Im}{\bar{\partial}}_{-2}=0,\\
\ker\partial_{-1}=\langle \mathrm{U}_{-1}^1,\mathrm{U}_{-1}^3,\mathrm{U}_{-1}^4 \rangle, \mathrm{Im}\partial_{-1}=\langle \mathrm{U}_0^3\rangle\esp
\ker\bar\partial_{-1}=\mathrm{U}_{-1} , \mathrm{Im}\bar\partial_{-1}=0 ,\\
\ker\bar{\partial}_{0}=\langle \mathrm{U}_0^1,\mathrm{U}_0^2,\mathrm{U}_0^3,\mathrm{U}_0^5,\mathrm{U}_0^6 \rangle\esp \mathrm{Im}\bar{\partial}_{0}=\langle \mathrm{U}_{-1}^4\rangle,\\
\ker{\partial}_{0}=\langle \mathrm{U}_0^1,\mathrm{U}_0^2,\mathrm{U}_0^3,\mathrm{U}_0^5,\mathrm{U}_0^6 \rangle\esp \mathrm{Im}{\partial}_{0}=\langle \mathrm{U}_{1}^4\rangle,\\
\ker\bar{\partial}_{1}=\overline{\ker\partial_{-1}}\esp \mathrm{Im}\bar{\partial}_{1}=\overline{\mathrm{Im}\partial_{-1}}\;
\ker{\partial}_{1}=\overline{\ker\bar\partial_{-1}}\esp \mathrm{Im}{\partial}_{1}=\overline{\mathrm{Im}\bar\partial_{-1}}.\end{cases}\]

From this, we deduce:

\[\begin{cases}  \mathrm{GH}_\partial^{-1}=\langle \mathrm{U}_{-1}^1,\mathrm{U}_{-1}^3,\mathrm{U}_{-1}^4 \rangle,\;
	\mathrm{GH}_\partial^{0}=\langle \mathrm{U}_0^1,\mathrm{U}_0^2,\mathrm{U}_0^5,\mathrm{U}_0^6 \rangle,\;\mathrm{GH}_\partial^{1}=\langle \mathrm{U}_1^1,\mathrm{U}_1^2,\mathrm{U}_1^3 \rangle,\\
	\mathrm{GH_{BC}^{-1}}=\langle \mathrm{U}_{-1}^1,\mathrm{U}_{-1}^3,\mathrm{U}_{-1}^4 \rangle,\;\mathrm{GH_{BC}^{0}}=\langle \mathrm{U}_0^1,\mathrm{U}_0^2,\mathrm{U}_0^3,\mathrm{U}_0^5,\mathrm{U}_0^6 \rangle,\;\mathrm{GH_{BC}^{1}}=\langle \mathrm{U}_1^1,\mathrm{U}_1^3,\mathrm{U}_1^4\rangle,
	\\
 \mathrm{GH}_A^{-1}=\langle \mathrm{U}_{-1}^1,\mathrm{U}_{-1}^2,\mathrm{U}_{-1}^3\rangle,\;\mathrm{GH}_A^{0}=\langle \mathrm{U}_0^1,\mathrm{U}_0^2,\mathrm{U}_{0}^4,\mathrm{U}_0^5,\mathrm{U}_0^6\rangle,\; \mathrm{GH}_A^{1}=\langle \mathrm{U}_{1}^1,\mathrm{U}_1^2,\mathrm{U}_{1}^3\rangle.  \end{cases}\]
\end{exem}

Let us give now the cohomologies of the others generalized complex structures in Tables \ref{8} and \ref{9}.
\begin{eqnarray*}
	A_{3,1}\oplus A_1:\;\rho&=&	{f}^{3}+i{f}^{4}+{f}^{124}-i{f}^{123},\;\mathrm{d}\rho=0,\mathrm{Im}\bar\partial_{-1}=0,\\
	\overline{\mathrm{L}}&=&\langle -i{f}^{2}+f_{{1}},
	f_{{3}}-if_{{4}},
	f^1-if_{{2}},
	-i{f}^{4}+{f}^{3}
	\rangle,\\
	\mathrm{U}_{-1}&=&\langle-\,i{f}^{23}+\,{f}^{24},
	1-\,i{f}^{12},
	\,{f}^{13}+\,i{f}^{14},
	\,i{f}^{34}+{f}^{1234}
	\rangle,\\
	\mathrm{U}_0&=&\langle {f}^{2},
	\,{f}^{3}+\,i{f}^{4}-\,{f}^{124}+\,i{f}^{123},
	{f}^{234},
	f^1,
	-\,{f}^{3}+\,i{f}^{4}+2\,{f}^{124}+\,i{f}^{123},
	{f}^{134}
	\rangle,\\
	 \mathrm{GH}_\partial^{-1}&=&\langle \mathrm{U}_{-1}^1,\mathrm{U}_{-1}^2,\mathrm{U}_{-1}^4 \rangle,\;
	\mathrm{GH}_\partial^{0}=\langle \mathrm{U}_0^1,\mathrm{U}_0^2,\mathrm{U}_0^5,\mathrm{U}_0^6 \rangle,\;\mathrm{GH}_\partial^{1}=\langle \mathrm{U}_1^2,\mathrm{U}_1^3,\mathrm{U}_1^4 \rangle ,\\
	\;\mathrm{GH_{BC}^{-1}}&=&\langle \mathrm{U}_{-1}^1,\mathrm{U}_{-1}^2,\mathrm{U}_{-1}^4 \rangle,\;\mathrm{GH_{BC}^{0}}=\langle \mathrm{U}_0^1,\mathrm{U}_0^2,\mathrm{U}_0^3,\mathrm{U}_0^5,\mathrm{U}_0^6 \rangle,\;\mathrm{GH_{BC}^{1}}=\langle \mathrm{U}_1^1,\mathrm{U}_1^2,\mathrm{U}_1^4\rangle,\\
	 \mathrm{GH}_A^{-1}&=&\langle \mathrm{U}_{-1}^2,\mathrm{U}_{-1}^3,\mathrm{U}_{-1}^4\rangle,\;\mathrm{GH}_A^{0}=\langle \mathrm{U}_0^1,\mathrm{U}_0^2,\mathrm{U}_{0}^4,\mathrm{U}_0^5,\mathrm{U}_0^6\rangle,\; \mathrm{GH}_A^{1}=\langle \mathrm{U}_{1}^2,\mathrm{U}_1^3,\mathrm{U}_{1}^4\rangle.
\end{eqnarray*}
\begin{eqnarray*}
A_{3,4}\oplus A_1:	\rho&=&-{f}^{3}+i{f}^{4}+{f}^{124}+i{f}^{123}, \mathrm{d}\rho=0,\mathrm{Im}\bar\partial_{-1}=0,\\
	 \overline{\mathrm{L}}&=&\langle -i{f}^{2}+f_{{1}},
	f_{{3}}+if_{{4}},
	f^1-if_{{2}},
	{f}^{4}-i{f}^{3}
	\rangle,\\
	\mathrm{U}_{-1}&=&\langle \,i{f}^{23}+\,{f}^{24},
	-1+\,i{f}^{12},
	-\,{f}^{13}+\,i{f}^{14},
	\,{f}^{34}-\,i{f}^{1234}
	\rangle,\\	
	\mathrm{U}_0&=&\langle {f}^{2},
	-\,{f}^{3}+\,i{f}^{4}-\,{f}^{124}-\,i{f}^{123},
	{f}^{234},
	f^1,
	-\,i{f}^{3}+\,{f}^{4}-\,i{f}^{124}-\,{f}^{123},
	{f}^{134}
	\rangle,\\
		\mathrm{GH}_\partial^{-1}&=&\langle \mathrm{U}_{-1}^2, \mathrm{U}_{-1}^4 \rangle,\;\mathrm{GH}_\partial^{0}=\langle \mathrm{U}_0^2,\mathrm{U}_0^5 \rangle,\;
		\mathrm{GH}_\partial^{1}=\langle \mathrm{U}_1^2,\mathrm{U}_{1}^4 \rangle,\\
		\mathrm{GH_{BC}^{-1}}&=&\langle \mathrm{U}_{-1}^2,\mathrm{U}_{-1}^4\rangle,\;
		\mathrm{GH_{BC}^{0}}=\langle \mathrm{U}_0^2,\mathrm{U}_0^5 \rangle,\;\mathrm{GH_{BC}^{1}}=\langle \mathrm{U}_1^2,\mathrm{U}_1^4 \rangle,\\
		\mathrm{GH}_A^{-1}&=&\langle \mathrm{U}_{-1}^2,\mathrm{U}_{-1}^4\rangle,\;\mathrm{GH}_A^{0}=\langle \mathrm{U}_{0}^2,\mathrm{U}_0^5\rangle,\; \mathrm{GH}_A^{1}=\langle \mathrm{U}_{1}^2,\mathrm{U}_{1}^4\rangle.
\end{eqnarray*}	
\begin{eqnarray*}
	A_{3,6}\oplus A_1: \rho&=&-f^1+i{f}^{2}+{f}^{234}+i{f}^{134}, \mathrm{d}\rho=i{f}^{13}+{f}^{23}=f^4.\rho,\mathrm{Im}\bar\partial_{-1}=\langle\rho\rangle,\\
	 \overline{\mathrm{L}}&=&\langle f_{{1}}+if_{{2}},
	-i{f}^{4}+f_{{3}},
	i{f}^{2}+f^1,
	{f}^{3}-if_{{4}}
	\rangle,\\
	\mathrm{U}_{-1}&=&\langle -1+\,i{f}^{34},
	-\,i{f}^{14}-\,{f}^{24},
	\,i{f}^{12}+\,{f}^{1234},
	\,{f}^{13}-\,i{f}^{23}
	\rangle,\\
	\mathrm{U}_0&=&\langle {f}^{4},
	\,f^1+\,i{f}^{2}+\,{f}^{234}-\,i{f}^{134},
	{f}^{3},
	{f}^{124},
	-\,f^1+\,i{f}^{2}-\,{f}^{234}-\,i{f}^{134},
	{f}^{123}
	\rangle,\\
	\mathrm{GH}_\partial^{-1}&=&\langle \mathrm{U}_{-1}^1, \mathrm{U}_{-1}^3 \rangle,\;
	\mathrm{GH}_\partial^{0}=\langle \mathrm{U}_0^1,\mathrm{U}_0^3,\mathrm{U}_{0}^4,\mathrm{U}_0^6 \rangle,\;
	\mathrm{GH}_\partial^{1}=\langle \mathrm{U}_1^1,\mathrm{U}_{1}^3 \rangle,\\
	\mathrm{GH_{BC}^{-1}}&=&\langle \mathrm{U}_{-1}^1,\mathrm{U}_{-1}^3\rangle,\;
	\mathrm{GH_{BC}^{0}}=\langle \mathrm{U}_0^1,\mathrm{U}_0^3,\mathrm{U}_0^4,\mathrm{U}_0^6 \rangle,\;\mathrm{GH_{BC}^{1}}=\langle \mathrm{U}_1^1,\mathrm{U}_1^3 \rangle,\\
	\mathrm{GH}_A^{-1}&=&\langle \mathrm{U}_{-1}^1,\mathrm{U}_{-1}^3\rangle,\;\mathrm{GH}_A^{0}=\langle \mathrm{U}_{0}^1,\mathrm{U}_0^3,\mathrm{U}_0^4,\mathrm{U}_0^6\rangle,\; \mathrm{GH}_A^{1}=\langle \mathrm{U}_{1}^1,\mathrm{U}_{1}^3\rangle.
\end{eqnarray*}
\begin{eqnarray*}
	A_{3,6}\oplus A_1: \rho&=&-{f}^{3}+i{f}^{4}+{f}^{124}+i{f}^{123}, \mathrm{d}\rho=0,\mathrm{Im}\bar\partial_{-1}=0,\\
	 \overline{\mathrm{L}}&=&\langle -i{f}^{2}+f_{{1}},
	f_{{3}}+if_{{4}},
	f^1-if_{{2}},
	i{f}^{4}+{f}^{3}\rangle,\\
	\mathrm{U}_{-1}&=&\langle \,i{f}^{23}+\,{f}^{24},
	-1+\,i{f}^{12},
	-\,{f}^{13}+\,i{f}^{14},
	\,i{f}^{34}+\,{f}^{1234}
	\rangle,\\
	\mathrm{U}_0&=&\langle {f}^{2},
	-\,{f}^{3}+\,i{f}^{4}-\,{f}^{124}-\,i{f}^{123},
	{f}^{234},
	f^1,
	{f}^{3}+\,i{f}^{4}+\,{f}^{124}-\,i{f}^{123},
	{f}^{134}
	\rangle,\\
	\mathrm{GH}_\partial^{-1}&=&\langle \mathrm{U}_{-1}^2, \mathrm{U}_{-1}^4 \rangle,\;
	\mathrm{GH}_\partial^{0}=\langle \mathrm{U}_0^2,\mathrm{U}_0^5 \rangle,\;
	\mathrm{GH}_\partial^{1}=\langle \mathrm{U}_1^2,\mathrm{U}_{1}^4 \rangle,\\
	\mathrm{GH_{BC}^{-1}}&=&\langle \mathrm{U}_{-1}^2,\mathrm{U}_{-1}^4\rangle,\;
	\mathrm{GH_{BC}^{0}}=\langle \mathrm{U}_0^2,\mathrm{U}_0^5 \rangle,\;\mathrm{GH_{BC}^{1}}=\langle \mathrm{U}_1^2,\mathrm{U}_1^4 \rangle,\\
	\mathrm{GH}_A^{-1}&=&\langle \mathrm{U}_{-1}^2,\mathrm{U}_{-1}^4\rangle,\;\mathrm{GH}_A^{0}=\langle \mathrm{U}_{0}^2,\mathrm{U}_0^5\rangle,\; \mathrm{GH}_A^{1}=\langle \mathrm{U}_{1}^2,\mathrm{U}_{1}^4\rangle.
\end{eqnarray*}

\begin{eqnarray*}
	A_{3,8,9}\oplus A_1: \rho&=&f^1-i{f}^{2}- \left( i\lambda+1 \right) {f}^{234}+ \left( \lambda-i
	\right) {f}^{134},\mathrm{d}\rho=-if^{13}-f^{23},\mathrm{Im}\bar\partial_{-1}=\langle\rho\rangle,\\
	\overline{\mathrm{L}}&=&\langle f_{{1}}+if_{{2}},
	i \left( -{\lambda}^{2}-1 \right){f}^{4} + \left( 1+i\lambda
	\right)f_{{3}} ,
	i \left( {\lambda}^{2}+1 \right){f}^{3} + \left( 1+i\lambda
	\right)f_{{4}} ,
	{f}^{2}-if^1
	\rangle,\\
	\mathrm{U}_{-1}&=&\langle 1+ \left( \,\lambda-\,i \right) {f}^{34},
	\left( -\,\lambda+\,i \right) {f}^{14}+ \left( \,i\lambda+1
	\right) {f}^{24},
	\left( \,\lambda-\,i \right) {f}^{13}+ \left( -\,i\lambda-1
	\right) {f}^{23},
	-\,{f}^{12}+ \left( -\,\lambda+\,i \right) {f}^{1234}
	\rangle,\\
	\mathrm{U}_0&=&\langle  {f}^{4},
	{f}^{3},
	\,if^1-\,{f}^{2}+ \left( -\,\lambda+\,i \right) {f}^{234}+ \left( 
	\,i\lambda+ 1\right) {f}^{134},\\
	&&	\,   \left( i-\lambda \right) f^1+\, \left( 1+
	i\lambda \right) {f}^{2}+\,i \left( {\lambda}^{2}+1 \right) 
	{f}^{234}-\, \left( {\lambda}^{2}+1
	\right)   {f}^{134},
	{f}^{124},
	{f}^{123}
	\rangle,\\
	\mathrm{GH}_\partial^{-1}&=&\langle \mathrm{U}_{-1}^1,\mathrm{U}_{-1}^4 \rangle,\;\mathrm{GH}_\partial^{0}=\langle \mathrm{U}_0^1,\mathrm{U}_{0}^2,\mathrm{U}_0^5,\mathrm{U}_0^6 \rangle,\;
	\mathrm{GH}_\partial^{1}=\langle \mathrm{U}_1^1,\mathrm{U}_1^4 \rangle,\\
	\mathrm{GH_{BC}^{-1}}&=&\langle \mathrm{U}_{-1}^1,\mathrm{U}_{-1}^4\rangle,\;
	\mathrm{GH_{BC}^{0}}=\langle \mathrm{U}_0^1,\mathrm{U}_0^2,\mathrm{U}_0^5,\mathrm{U}_0^6 \rangle,\;\mathrm{GH_{BC}^{1}}=\langle \mathrm{U}_1^1,\mathrm{U}_1^4 \rangle,\\
	\mathrm{GH}_A^{-1}&=&\langle \mathrm{U}_{-1}^1,\mathrm{U}_{-1}^4\rangle,\;\mathrm{GH}_A^{0}=\langle \mathrm{U}_{0}^1,\mathrm{U}_{0}^2,\mathrm{U}_0^5,\mathrm{U}_0^6\rangle,\; \mathrm{GH}_A^{1}=\langle \mathrm{U}_{1}^1,\mathrm{U}_{1}^4\rangle.
\end{eqnarray*}

\begin{eqnarray*}
	A_{4,1}: \rho&=&-{f}^{3}+i{f}^{4}+ \left( 1+i\lambda \right) {f}^{124}+ \left( i-
	\lambda \right) {f}^{123}, \;\mathrm{d}\rho=0,\mathrm{Im}\bar\partial_{-1}=0,\\
	\overline{\mathrm{L}}&=&\langle -i \left( {\lambda}^{2}+1 \right){f}^{2} + \left( 1+i\lambda
	\right)f_{{1}} ,
	f_{{3}}+if_{{4}},
	\left( 1-i\lambda \right)f^1 -if_{{2}},
	i{f}^{4}+{f}^{3}\rangle,\\
	\mathrm{U}_{-1}&=&\langle  \left( -\,\lambda+\,i \right) {f}^{23}+ \left( \,i\lambda+1
	\right) {f}^{24},
	-1+ \left( -\,\lambda+\,i \right) {f}^{12},
	-\,{f}^{13}+2\,i{f}^{14},
	\,i{f}^{34}+ \left( \,i\lambda+1 \right) {f}^{1234}	
	\rangle,\\
	\mathrm{U}_0&=&\langle {f}^{2},
	-\left( \,i\lambda+1 \right) {f}^{3}+ \left( -\,\lambda+\,i
	\right) {f}^{4}- \left( \,{\lambda}^{2}+1 \right) {f}^{124}-i \left( 
	1+{\lambda}^{2} \right) {f}^{123},
	{f}^{234},
	\,f^1,\\&&
	\,{f}^{3}+\,i{f}^{4}+ \left( \,i\lambda+1 \right) {f}^{124}+
	\left( \,\lambda-\,i \right) {f}^{123},
	{f}^{134}
	\rangle,\\
	\mathrm{GH}_\partial^{-1}&=&\langle \mathrm{U}_{-1}^1, \mathrm{U}_{-1}^4 \rangle,\;\mathrm{GH}_\partial^{0}=\langle \mathrm{U}_0^2,\mathrm{U}_0^5 \rangle,\;
	\mathrm{GH}_\partial^{1}=\langle \mathrm{U}_1^2,\mathrm{U}_1^3 \rangle,\\
	\mathrm{GH_{BC}^{-1}}&=&\langle \mathrm{U}_{-1}^1,\mathrm{U}_{-1}^4\rangle,\;
	\mathrm{GH_{BC}^{0}}=\langle \mathrm{U}_0^2,\mathrm{U}_0^3,\mathrm{U}_0^5,\mathrm{U}_0^6 \rangle,
	\;\mathrm{GH_{BC}^{1}}=\langle \mathrm{U}_1^1,\mathrm{U}_1^4 \rangle,\\
	\mathrm{GH}_A^{-1}&=&\langle \mathrm{U}_{-1}^2,\mathrm{U}_{-1}^3\rangle,\;\mathrm{GH}_A^{0}=\langle \mathrm{U}_{0}^1,\mathrm{U}_{0}^2,\mathrm{U}_{0}^4,\mathrm{U}_0^5\rangle,\; \mathrm{GH}_A^{1}=\langle \mathrm{U}_{1}^2,\mathrm{U}_{1}^3\rangle.
\end{eqnarray*}

\begin{eqnarray*}
	A_{4,5}^{-\frac12,-\frac12}:\; \rho&=&{f}^{2}+i{f}^{3}-{f}^{134}+i{f}^{124},\;
	\mathrm{d}\rho=\frac12\,{f}^{24}+\frac{i}2{f}^{34},\mathrm{Im}\bar\partial_{-1}=
	\langle\rho\rangle,\\
	\overline{\mathrm{L}}&=&\langle-i{f}^{4}+f_{{1}},
	if_{{2}}+f_{{3}},
	f^1-if_{{4}},
	{f}^{3}+i{f}^{2}
	\rangle,\\
	\mathrm{U}_{-1}&=&\langle \,i{f}^{24}-\,{f}^{34},
	\,i+\,{f}^{14},
	\,{f}^{12}+\,i{f}^{13},
	-\,{f}^{23}+\,i{f}^{1234}	
	\rangle,\\
	\mathrm{U}_0&=&\langle {f}^{4},
	\,{f}^{2}+\,i{f}^{3}+\,{f}^{134}-\,i{f}^{124},
	{f}^{234},
	f^1,
	{f}^{2}-\,i{f}^{3}+\,{f}^{134}+\,i{f}^{124},
	{f}^{123}
	\rangle,\\
	\mathrm{GH}_\partial^{-1}&=&\langle \mathrm{U}_{-1}^2 \rangle,\;\mathrm{GH}_\partial^{0}=\langle \mathrm{U}_0^1,\mathrm{U}_0^6 \rangle,\;
	\mathrm{GH}_\partial^{1}=\langle \mathrm{U}_1^4 \rangle,\\
	\mathrm{GH_{BC}^{-1}}&=&\langle \mathrm{U}_{-1}^2\rangle,\;
	\mathrm{GH_{BC}^{0}}=\langle \mathrm{U}_0^1,\mathrm{U}_0^3,\mathrm{U}_0^6 \rangle,\;\mathrm{GH_{BC}^{1}}=\langle \mathrm{U}_1^2 \rangle,\\
	\mathrm{GH}_A^{-1}&=&\langle \mathrm{U}_{-1}^4\rangle,\;\mathrm{GH}_A^{0}=\langle \mathrm{U}_{0}^1,\mathrm{U}_{0}^4,\mathrm{U}_0^6\rangle,\; \mathrm{GH}_A^{1}=\langle \mathrm{U}_{1}^4\rangle.
\end{eqnarray*}

\begin{eqnarray*}
	A_{4,6}^{-2\be,\be}:\;\rho&=&{f}^{2}+i{f}^{3}-{f}^{134}+i{f}^{124},\;
	\mathrm{d}\rho=\left( i-\beta \right) {f}^{24}- \left( 1+i\beta \right) {f}^{34},
	\mathrm{Im}\bar\partial_{-1}=\langle\rho\rangle,\\
	\overline{\mathrm{L}}&=&\langle-i{f}^{4}+f_{{1}},
	if_{{2}}+f_{{3}},
	f^1-if_{{4}},
	{f}^{3}+i{f}^{2}
	\rangle,\\
	\mathrm{U}_{-1}&=&\langle \,i{f}^{24}-\,{f}^{34},
	2\,i+\,{f}^{14},
	\,{f}^{12}+\,i{f}^{13},
	-\,{f}^{23}+\,i{f}^{1234}
	\rangle,\\
	\mathrm{U}_0&=&\langle\,{f}^{4},
	{f}^{2}+\,i{f}^{3}+\,{f}^{134}-\,i{f}^{124},
	{f}^{234},
	f^1,
	\,{f}^{2}-\,i{f}^{3}+\,{f}^{134}+\,i{f}^{124},
	{f}^{123}
	\rangle,\\
\mathrm{GH}_\partial^{-1}&=&\langle \mathrm{U}_{-1}^2 \rangle,\;\mathrm{GH}_\partial^{0}=\langle \mathrm{U}_0^1,\mathrm{U}_0^6 \rangle,\;
\mathrm{GH}_\partial^{1}=\langle \mathrm{U}_1^4 \rangle,\\
\mathrm{GH_{BC}^{-1}}&=&\langle \mathrm{U}_{-1}^2\rangle,\;\mathrm{GH_{BC}^{0}}=\langle \mathrm{U}_0^1,\mathrm{U}_0^3,\mathrm{U}_0^6 \rangle,\;\mathrm{GH_{BC}^{1}}=\langle \mathrm{U}_1^2 \rangle,\\
\mathrm{GH}_A^{-1}&=&\langle \mathrm{U}_{-1}^4\rangle,\;\mathrm{GH}_A^{0}=\langle \mathrm{U}_{0}^1,\mathrm{U}_{0}^4,\mathrm{U}_0^6\rangle,\; \mathrm{GH}_A^{1}=\langle \mathrm{U}_{1}^4\rangle.
\end{eqnarray*}

\begin{eqnarray*}
	A_{4,10}:\; \rho&=&{f}^{2}+i{f}^{3}- \left( i\lambda+1 \right) {f}^{134}+ \left( i-
	\lambda \right) {f}^{124},\;\mathrm{d}\rho=i{f}^{24}-{f}^{34},\mathrm{Im}\bar\partial_{-1}=\langle\rho\rangle,\\
	\overline{\mathrm{L}}&=&\langle -i \left( {\lambda}^{2}+1 \right){f}^{4} + \left( 1+i\lambda
	\right)f_{{1}} ,
	if_{{2}}+f_{{3}},
	\left( 1-i\lambda \right)f^1 -if_{{4}},
	{f}^{3}+i{f}^{2}
	\rangle,\\
	\mathrm{U}_{-1}&=&\langle  \left( -\,\lambda+\,i \right) {f}^{24}- \left( \,i\lambda+1
	\right) {f}^{34},
	\,i+ \left( \,i\lambda+1 \right) {f}^{14},
	\,{f}^{12}+\,i{f}^{13},
	-\,{f}^{23}+ \left( -\,\lambda+\,i \right) {f}^{1234}	
	\rangle,\\
	\mathrm{U}_0&=&\langle {f}^{4},
	\left( \,i\lambda+ 1\right) {f}^{2}+ \left( -\,\lambda+\,i \right) 
	{f}^{3}+ \left( \,{\lambda}^{2}+1 \right) {f}^{134}- \left( \,i+\,i
	{\lambda}^{2} \right) {f}^{124},
	{f}^{234},
	f^1,\\&&
	\,{f}^{2}-\,i{f}^{3}+ \left( \,i\lambda+1 \right) {f}^{134}+
	\left( -\,\lambda+\,i \right) {f}^{124},
	{f}^{123}
	\rangle,\\
	\mathrm{GH}_\partial^{-1}&=&\langle \mathrm{U}_{-1}^4 \rangle,\;\mathrm{GH}_\partial^{0}=\langle \mathrm{U}_0^1,\mathrm{U}_0^6 \rangle,\;
	\mathrm{GH}_\partial^{1}=\langle \mathrm{U}_1^2 \rangle,\\
	\mathrm{GH_{BC}^{-1}}&=&\langle \mathrm{U}_{-1}^4\rangle,\;\mathrm{GH_{BC}^{0}}=\langle \mathrm{U}_0^1,\mathrm{U}_0^3,\mathrm{U}_0^6 \rangle,
	\;\mathrm{GH_{BC}^{1}}=\langle \mathrm{U}_1^4 \rangle,\\
	\mathrm{GH}_A^{-1}&=&\langle \mathrm{U}_{-1}^2\rangle,\;\mathrm{GH}_A^{0}=\langle \mathrm{U}_{0}^1,\mathrm{U}_{0}^4,\mathrm{U}_0^6\rangle,\; \mathrm{GH}_A^{1}=\langle \mathrm{U}_{1}^2\rangle.
\end{eqnarray*}

 \begin{eqnarray*}
 	A_2\oplus2A_1:\;\rho&=& f^1+i{f}^{2}+{f}^{234}-i{f}^{134},\;\mathrm{d}\rho=-i{f}^{12}- {f}^{1234},\mathrm{Im}\bar\partial_{-1}=0,\\
 	\overline{\mathrm{L}}&=&\langle f_{{1}}-if_{{2}}, -i{f}^{4}+f_{{3}}, -i{f}^{2}+f^1, {f}^{3}-if_{{4}}\rangle,\\
 	\mathrm{U}_{-1}&=&\langle  {f}^{34}, i{f}^{14}-{f}^{24}, i{f}^{12}+{f}^{1234}, -{f}^{13}-i{f}^{23}\rangle\\
 	\mathrm{U}_0&=&\langle  {f}^{4}, -f^1+i{f}^{2}+{f}^{234}+i{f}^{134}, {f}^{3}, {f}^{124}, f^1+i{f}^{2}-{f}^{234}+i{f}^{134}, {f}^{123} \rangle,\\
 	\mathrm{GH}_\partial^{-1}&=&\langle \mathrm{U}_{-1}^1 \rangle,\;
 	\mathrm{GH}_\partial^{0}=\langle \mathrm{U}_0^1,\mathrm{U}_0^2,\mathrm{U}^3_0 \rangle,\;\mathrm{GH}_\partial^{1}=\langle \mathrm{U}_1^1, \mathrm{U}_{1}^2, \mathrm{U}_{1}^4 \rangle ,\\
 	\;\mathrm{GH_{BC}^{-1}}&=&\langle  \mathrm{U}_{-1}^1, \mathrm{U}_{-1}^3 \rangle,\;\mathrm{GH_{BC}^{0}}=\langle \mathrm{U}_{0}^1,\mathrm{U}_{0}^3,\mathrm{U}_{0}^4,\mathrm{U}_{0}^6 \rangle,\;\mathrm{GH_{BC}^{1}}=\langle \mathrm{U}_{1}^1, \mathrm{U}_{1}^3 \rangle,\\
 	\mathrm{GH}_A^{-1}&=&\langle \mathrm{U}_{-1}^1, \mathrm{U}_{-1}^2, \mathrm{U}_{-1}^4  \rangle,\;\mathrm{GH}_A^{0}=\langle \mathrm{U}_{0}^1,\mathrm{U}_{0}^2, \mathrm{U}_{0}^3,\mathrm{U}_{0}^5  \rangle,\; \mathrm{GH}_A^{1}=\langle \mathrm{U}_{1}^1, \mathrm{U}_{1}^2, \mathrm{U}_{1}^4 \rangle.
 \end{eqnarray*}

 \begin{eqnarray*}
 	A_2\oplus2A_1 :\;\rho&=&-{f}^{3}-i{f}^{4}-{f}^{124}+i{f}^{123}  ,\;\mathrm{d}\rho=0,\mathrm{Im}\bar\partial_{-1}=\langle\rho\rangle,\\
 	\overline{\mathrm{L}}&=&\langle -i{f}^{2}+f_{{1}}, if^1+f_{{2}},  f_{{3}}-if_{{4}}, -i{f}^{4}+{f}^{3} \rangle,\\
 	\mathrm{U}_{-1}&=&\langle  i{f}^{23}-{f}^{24}, -i{f}^{13}+{f}^{14}, -1+i{f}^{12}, -i{f}^{34}-{f}^{1234}  \rangle\\
 	\mathrm{U}_0&=&\langle -i{f}^{3}+{f}^{4}+i{f}^{124}+{f}^{123}, {f}^{2}, {f}^{234}, f^1, {f}^{134}, {f}^{3}-i{f}^{4}-{f}^{124}-i{f}^{123}  \rangle,\\
 	\mathrm{GH}_\partial^{-1}&=&\langle \mathrm{U}_{-1}^2,\mathrm{U}_{-1}^3,\mathrm{U}_{-1}^4 \rangle,\;
 	\mathrm{GH}_\partial^{0}=\langle \mathrm{U}_{0}^4, \mathrm{U}_{0}^5, \mathrm{U}_{0}^6  \rangle,\;\mathrm{GH}_\partial^{1}=\langle \mathrm{U}_{1}^2\rangle ,\\
 	\;\mathrm{GH_{BC}^{-1}}&=&\langle \mathrm{U}_{-1}^2,\mathrm{U}_{-1}^3,\mathrm{U}_{-1}^4 \rangle,\;\mathrm{GH_{BC}^{0}}=\langle \mathrm{U}_{0}^1, \mathrm{U}_{0}^4, \mathrm{U}_{0}^5, \mathrm{U}_{0}^6  \rangle,\;\mathrm{GH_{BC}^{1}}=\langle \mathrm{U}_{1}^2, \mathrm{U}_{1}^3, \mathrm{U}_{1}^4\rangle,\\
 	\mathrm{GH}_A^{-1}&=&\langle \mathrm{U}_{-1}^1,\mathrm{U}_{-1}^2 \rangle,\;\mathrm{GH}_A^{0}=\langle \mathrm{U}_{0}^2, \mathrm{U}_{0}^3, \mathrm{U}_{0}^4, \mathrm{U}_{0}^5 \rangle,\; \mathrm{GH}_A^{1}=\langle \mathrm{U}_{1}^1, \mathrm{U}_{1}^2 \rangle.
 \end{eqnarray*}

 \begin{eqnarray*}
 	2A_2 :\;\rho&=&i{f}^{3}-{f}^{4}-i{f}^{134}+i{f}^{124}+{f}^{123} ,\;\mathrm{d}\rho={f}^{34}-i{f}^{1234},\mathrm{Im}\bar\partial_{-1}=\langle\rho\rangle,\\
 	\overline{\mathrm{L}}&=&\langle i{f}^{3}-i{f}^{2}+f_{{1}}, if^1+f_{{2}}, -if^1+f_{{3}}-if_{{4}},-i{f}^{4}+{f}^{3}    \rangle,\\
 	\mathrm{U}_{-1}&=&\langle {f}^{23}+i{f}^{24}-i{f}^{34}, {f}^{13}+i{f}^{14}, i+{f}^{12}+i{f}^{14}, {f}^{34}-i{f}^{1234}\rangle\\
 	\mathrm{U}_0&=&\langle -{f}^{3}-i{f}^{4}+{f}^{124}-i{f}^{123}, {f}^{2}+{f}^{124}, {f}^{234}, f^1, {f}^{134}, -{f}^{3}+i{f}^{4}+{f}^{124}+i{f}^{123} \rangle,\\
 	\mathrm{GH}_\partial^{-1}&=&\langle \mathrm{U}_{-1}^2-\mathrm{U}_{-1}^3 \rangle,\;\mathrm{GH}_\partial^{0}=\langle\mathrm{U}_{0}^4, \mathrm{U}_{0}^6  \rangle,\;\mathrm{GH}_\partial^{1}=\langle \mathrm{U}_{1}^2\rangle ,\\
 	\;\mathrm{GH_{BC}^{-1}}&=&\langle \mathrm{U}_{-1}^2-\mathrm{U}_{-1}^3 \rangle,\;\mathrm{GH_{BC}^{0}}=\langle \mathrm{U}_{0}^4, 2\mathrm{U}_{0}^3-i\mathrm{U}_{0}^1-i\mathrm{U}_{0}^6 \rangle,\;\mathrm{GH_{BC}^{1}}=\langle \mathrm{U}_{1}^2-\mathrm{U}_{1}^3 \rangle,\\
 	\mathrm{GH}_A^{-1}&=&\langle \mathrm{U}_{-1}^2 \rangle,\;\mathrm{GH}_A^{0}=\langle  \mathrm{U}_{0}^3, \mathrm{U}_{0}^4 \rangle,\; \mathrm{GH}_A^{1}=\langle \mathrm{U}_{1}^2\rangle.
 \end{eqnarray*}

 \begin{eqnarray*}
 	2A_2    :\;\rho&=&i{f}^{3}-{f}^{4}+i{f}^{124}+{f}^{123} ,\;\mathrm{d}\rho={f}^{34}-i{f}^{1234},\mathrm{Im}\bar\partial_{-1}=\langle\rho\rangle,\\
 	\overline{\mathrm{L}}&=&\langle -i{f}^{2}+f_{{1}}, if^1+f_{{2}}, f_{{3}}-if_{{4}}, -i{f}^{4}+{f}^{3} \rangle,\\
 	\mathrm{U}_{-1}&=&\langle  {f}^{23}+i{f}^{24}, -{f}^{13}-i{f}^{14}, i+{f}^{12}, i{f}^{1234}-{f}^{34}  \rangle\\
 	\mathrm{U}_0&=&\langle  -{f}^{3}-i{f}^{4}+{f}^{124}-i{f}^{123}, {f}^{2}, {f}^{234}, f^1, {f}^{134}, -i{f}^{3}-{f}^{4}+i{f}^{124}-{f}^{123} \rangle,\\
 	\mathrm{GH}_\partial^{-1}&=&\langle \mathrm{U}_{-1}^3 \rangle,\;
 	\mathrm{GH}_\partial^{0}=\langle \mathrm{U}_{0}^4, \mathrm{U}_{0}^6 \rangle,\;\mathrm{GH}_\partial^{1}=\langle \mathrm{U}_1^2\rangle ,\\
 	\;\mathrm{GH_{BC}^{-1}}&=&\langle \mathrm{U}_{-1}^3 \rangle,\;\mathrm{GH_{BC}^{0}}=\langle \mathrm{U}_{0}^4, \mathrm{U}_{0}^5, 2\mathrm{U}_{0}^3-\mathrm{U}_{0}^6-i\mathrm{U}_{0}^1 \rangle,\;\mathrm{GH_{BC}^{1}}=\langle \mathrm{U}_{1}^3 \rangle,\\
 	\mathrm{GH}_A^{-1}&=&\langle \mathrm{U}_{-1}^2 \rangle,\;\mathrm{GH}_A^{0}=\langle \mathrm{U}_{0}^2, \mathrm{U}_{0}^3,\mathrm{U}_{0}^4 \rangle ,\; \mathrm{GH}_A^{1}=\langle \mathrm{U}_{1}^2 \rangle.
 \end{eqnarray*}

 \begin{eqnarray*}
 	A_{33}\oplus A_1 :\;\rho&=&if^1-{f}^{2}+i{f}^{234}+{f}^{134}    ,\;\mathrm{d}\rho=-i{f}^{13}+{f}^{23},\mathrm{Im}\bar\partial_{-1}=\langle\rho\rangle,\\
 	\overline{\mathrm{L}}&=&\langle  f_{{1}}-if_{{2}}, -i{f}^{4}+f_{{3}}, i{f}^{3}+f_{{4}}, -i{f}^{2}+f^1 \rangle,\\
 	\mathrm{U}_{-1}&=&\langle i+{f}^{34}, -{f}^{14}-i{f}^{24}, {f}^{13}+i{f}^{23}, i{f}^{1234}-{f}^{12} \rangle\\
 	\mathrm{U}_0&=&\langle  {f}^{4}, {f}^{3}, -if^1-{f}^{2}+i{f}^{234}-{f}^{134}, -f^1-i{f}^{2}+{f}^{234}-i{f}^{134}, {f}^{124}, {f}^{123}  \rangle,\\
 	\mathrm{GH}_\partial^{-1}&=&\langle \mathrm{U}_{-1}^1 \rangle,\;
 	\mathrm{GH}_\partial^{0}=\langle \mathrm{U}_{0}^1, \mathrm{U}_{0}^2  \rangle,\;\mathrm{GH}_\partial^{1}=\langle \mathrm{U}_{1}^1\rangle ,\\
 	\;\mathrm{GH_{BC}^{-1}}&=&\langle \mathrm{U}_{-1}^1 \rangle,\;\mathrm{GH_{BC}^{0}}=\langle\mathrm{U}_{0}^1, \mathrm{U}_{0}^2 \rangle,\;\mathrm{GH_{BC}^{1}}=\langle \mathrm{U}_{1}^1 \rangle,\\
 	\mathrm{GH}_A^{-1}&=&\langle \mathrm{U}_{-1}^1\rangle,\;\mathrm{GH}_A^{0}=\langle \mathrm{U}_{0}^1, \mathrm{U}_{0}^2\rangle,\; \mathrm{GH}_A^{1}=\langle \mathrm{U}_{1}^1 \rangle.
 \end{eqnarray*}
 
 \begin{eqnarray*}
 	A_{37}^\al\oplus A_1 :\;\rho&=&if^1-{f}^{2}+i{f}^{234}+{f}^{134}  ,\;\mathrm{d}\rho=-(1+i\al)({f}^{13}+i{f}^{23}),\mathrm{Im}\bar\partial_{-1}=\langle\rho\rangle,\\
 	\overline{\mathrm{L}}&=&\langle f_{{1}}-if_{{2}}, -i{f}^{4}+f_{{3}}, i{f}^{3}+f_{{4}}, -i{f}^{2}+f^1  \rangle,\\
 	\mathrm{U}_{-1}&=&\langle i+{f}^{34}, -{f}^{14}-i{f}^{24}, {f}^{13}+i{f}^{23}, i{f}^{1234}-{f}^{12} \rangle\\
 	\mathrm{U}_0&=&\langle  {f}^{4}, {f}^{3}, -if^1-{f}^{2}+i{f}^{234}-{f}^{134}, -f^1-i{f}^{2}+{f}^{234}-i{f}^{134}, i{f}^{124}, -i{f}^{123}  \rangle,\\
 	\mathrm{GH}_\partial^{-1}&=&\langle \mathrm{U}_{-1}^1\rangle,\;
 	\mathrm{GH}_\partial^{0}=\langle \mathrm{U}_{0}^1 , \mathrm{U}_{0}^2   \rangle,\;\mathrm{GH}_\partial^{1}=\langle \mathrm{U}_{1}^1 \rangle ,\\
 	\;\mathrm{GH_{BC}^{-1}}&=&\langle \mathrm{U}_{-1}^1 \rangle,\;\mathrm{GH_{BC}^{0}}=\langle\mathrm{U}_{0}^1 , \mathrm{U}_{0}^2  \rangle,\;\mathrm{GH_{BC}^{1}}=\langle \mathrm{U}_{1}^1 \rangle,\\
 	\mathrm{GH}_A^{-1}&=&\langle \mathrm{U}_{-1}^1 \rangle,\;\mathrm{GH}_A^{0}=\langle \mathrm{U}_{0}^1 , \mathrm{U}_{0}^2  \rangle,\; \mathrm{GH}_A^{1}=\langle \mathrm{U}_{1}^1\rangle.
 \end{eqnarray*}

 \begin{eqnarray*}
 	A_{4,2}^{-1}   :\;\rho&=&-i{f}^{3}+{f}^{4}+(\la-i) {f}^{124}+i( i-\la){f}^{123}  ,\;\mathrm{d}\rho=i{f}^{34} +(1+i\la){f}^{1234},\mathrm{Im}\bar\partial_{-1}=0,\\
 	\overline{\mathrm{L}}&=&\langle f_{{3}}-if_{{4}}, ( 1-i\la)f^1 -if_{{2}},  ( 1-i\la){f}^{2} +if_{{1}}, -i{f}^{4}+{f}^{3}  \rangle,\\
 	\mathrm{U}_{-1}&=&\langle -i+(-1-i\la){f}^{12}, -i{f}^{13}+{f}^{14}, -i{f}^{23}+{f}^{24}, {f}^{34}+(-i+\la){f}^{1234} \rangle\\
 	\mathrm{U}_0&=&\langle f^1, {f}^{2}, i{f}^{3}+{f}^{4}+(-i+\la) {f}^{124}+(1+i\la){f}^{123}, -{f}^{3}-i{f}^{4}+(-i\la+1) {f}^{124}+(-\la-i){f}^{123}, {f}^{134}, {f}^{234} \rangle,\\
 	\mathrm{GH}_\partial^{-1}&=&\langle \mathrm{U}_{-1}^2\rangle,\;
 	\mathrm{GH}_\partial^{0}=\langle \mathrm{U}_{0}^3 \rangle,\;\mathrm{GH}_\partial^{1}=\langle \mathrm{U}_{1}^1 \rangle ,\\
 	\;\mathrm{GH_{BC}^{-1}}&=&\langle \mathrm{U}_{-1}^2, \mathrm{U}_{-1}^4  \rangle,\;\mathrm{GH_{BC}^{0}}=\langle\mathrm{U}_{0}^5\rangle,\;\mathrm{GH_{BC}^{1}}=\langle \mathrm{U}_{1}^2, \mathrm{U}_{1}^4 \rangle,\\
 	\mathrm{GH}_A^{-1}&=&\langle \mathrm{U}_{-1}^1\rangle,\;\mathrm{GH}_A^{0}=\langle \mathrm{U}_{0}^1 , \mathrm{U}_{0}^3 , \mathrm{U}_{0}^4  \rangle,\; \mathrm{GH}_A^{1}=\langle \mathrm{U}_{1}^1  \rangle.
 \end{eqnarray*}
 
 \begin{eqnarray*}
 	A_{4,2}^{1} :\;\rho&=&if^1+{f}^{3}+i{f}^{234}+{f}^{124} ,\;\mathrm{d}\rho=-i{f}^{14}-{f}^{34},\mathrm{Im}\bar\partial_{-1}=\langle\rho\rangle,\\
 	\overline{\mathrm{L}}&=&\langle f_{{1}}+if_{{3}}, -i{f}^{4}+f_{{2}}, i{f}^{2}+f_{{4}}, i{f}^{3}+f^1 \rangle,\\
 	\mathrm{U}_{-1}&=&\langle i+{f}^{24}, -{f}^{14}+i{f}^{34}, {f}^{12}+i{f}^{23}, i{f}^{1234}+{f}^{13}\rangle\\
 	\mathrm{U}_0&=&\langle {f}^{4}, {f}^{2}, -if^1+{f}^{3}+i{f}^{234}-{f}^{124}, -f^1+i{f}^{3}+{f}^{234}-i{f}^{124}, {f}^{134}, {f}^{123}   \rangle,\\
 	\mathrm{GH}_\partial^{-1}&=&\langle  \mathrm{U}_{-1}^1 \rangle,\;
 	\mathrm{GH}_\partial^{0}=\langle \mathrm{U}_{0}^1   \rangle,\;\mathrm{GH}_\partial^{1}=\langle0\rangle ,\\
 	\;\mathrm{GH_{BC}^{-1}}&=&\langle \mathrm{U}_{-1}^1 \rangle,\;\mathrm{GH_{BC}^{0}}=\langle\mathrm{U}_{0}^1 \rangle,\;\mathrm{GH_{BC}^{1}}=\langle \mathrm{U}_{1}^1 \rangle,\\
 	\mathrm{GH}_A^{-1}&=&\langle0 \rangle,\;\mathrm{GH}_A^{0}=\langle\mathrm{U}_{0}^1 , \mathrm{U}_{0}^2  \rangle,\; \mathrm{GH}_A^{1}=\langle 0 \rangle.
 \end{eqnarray*}
 
 \begin{eqnarray*}
 	A_{4,3} :\;\rho&=&  f^1+i{f}^{4}+{f}^{234}-i{f}^{123},\;\mathrm{d}\rho=-{f}^{14}+i{f}^{1234},\mathrm{Im}\bar\partial_{-1}=0,\\
 	\overline{\mathrm{L}}&=&\langle f_{{1}}-if_{{4}}, -i{f}^{3}+f_{{2}}, i{f}^{2}+f_{{3}}, {f}^{4}+if^1  \rangle,\\
 	\mathrm{U}_{-1}&=&\langle 1-i{f}^{23}, i{f}^{13}+{f}^{34}, -i{f}^{12}-{f}^{24}, -{f}^{14}+i{f}^{1234} \rangle\\
 	\mathrm{U}_0&=&\langle  {f}^{3}, {f}^{2}, -if^1-{f}^{4}+i{f}^{234}-{f}^{123}, if^1-{f}^{4}-i{f}^{234}-{f}^{123}, {f}^{134}, {f}^{124}  \rangle,\\
 	\mathrm{GH}_\partial^{-1}&=&\langle \mathrm{U}_{-1}^1 \rangle,\;
 	\mathrm{GH}_\partial^{0}=\langle \mathrm{U}_{0}^1, \mathrm{U}_{0}^3  \rangle,\;\mathrm{GH}_\partial^{1}=\langle \mathrm{U}_{1}^1, \mathrm{U}_{1}^3\rangle ,\\
 	\;\mathrm{GH_{BC}^{-1}}&=&\langle \mathrm{U}_{-1}^1, \mathrm{U}_{-1}^4 \rangle,\;\mathrm{GH_{BC}^{0}}=\langle\mathrm{U}_{0}^1, \mathrm{U}_{0}^6\rangle,\;\mathrm{GH_{BC}^{1}}=\langle \mathrm{U}_{1}^1, \mathrm{U}_{1}^4 \rangle,\\
 	\mathrm{GH}_A^{-1}&=&\langle \mathrm{U}_{-1}^1, \mathrm{U}_{-1}^3 \rangle,\;\mathrm{GH}_A^{0}=\langle \mathrm{U}_{0}^1, \mathrm{U}_{0}^3, \mathrm{U}_{0}^4 \rangle,\; \mathrm{GH}_A^{1}=\langle \mathrm{U}_{1}^1, \mathrm{U}_{1}^3 \rangle.
 \end{eqnarray*}

 \begin{eqnarray*}
 	A_{4,5}^{-\al,\al},\;\al\not=1:\;\rho&=&f^1+i{f}^{4}+{f}^{234}-i{f}^{123} ,\;\mathrm{d}\rho=-{f}^{14}+i{f}^{1234},\mathrm{Im}\bar\partial_{-1}=0,\\
 	\overline{\mathrm{L}}&=&\langle f_{{1}}-if_{{4}}, -i{f}^{3}+f_{{2}}, -i{f}^{4}+f^1, {f}^{2}-if_{{3}}    \rangle,\\
 	\mathrm{U}_{-1}&=&\langle 1-i{f}^{23}, i{f}^{13}+{f}^{34}, i{f}^{14}+{f}^{1234}, -{f}^{12}+i{f}^{24} \rangle\\
 	\mathrm{U}_0&=&\langle  {f}^{3}, -f^1+i{f}^{4}+{f}^{234}+i{f}^{123}, {f}^{2}, {f}^{134}, f^1+i{f}^{4}-{f}^{234}+i{f}^{123}, {f}^{124} \rangle,\\
 	\mathrm{GH}_\partial^{-1}&=&\langle \mathrm{U}_{-1}^1 \rangle,\;
 	\mathrm{GH}_\partial^{0}=\langle \mathrm{U}_{0}^2  \rangle,\;\mathrm{GH}_\partial^{1}=\langle \mathrm{U}_{1}^1  \rangle ,\\
 	\;\mathrm{GH_{BC}^{-1}}&=&\langle \mathrm{U}_{-1}^1, \mathrm{U}_{-1}^3  \rangle,\;\mathrm{GH_{BC}^{0}}=\langle  0 \rangle,\;\mathrm{GH_{BC}^{1}}=\langle  \mathrm{U}_{1}^1,  \mathrm{U}_{1}^3 \rangle,\\
 	\mathrm{GH}_A^{-1}&=&\langle \mathrm{U}_{-1}^1\rangle,\;\mathrm{GH}_A^{0}=\langle  \mathrm{U}_{0}^2,   \mathrm{U}_{0}^5  \rangle,\; \mathrm{GH}_A^{1}=\langle  \mathrm{U}_{1}^1  \rangle.
 \end{eqnarray*}
 
 \begin{eqnarray*}
 	A_{4,5}^{-1,1} :\;\rho&=&f^1+i{f}^{4}+{f}^{234}-i{f}^{123}    ,\;\mathrm{d}\rho=-{f}^{14}+i{f}^{1234},\mathrm{Im}\bar\partial_{-1}=0,\\
 	\overline{\mathrm{L}}&=&\langle f_{{1}}-if_{{4}}, -i{f}^{3}+f_{{2}}, -i{f}^{4}+f^1, {f}^{2}-if_{{3}}    \rangle,\\
 	\mathrm{U}_{-1}&=&\langle 1-i{f}^{23}, i{f}^{13}+{f}^{34}, i{f}^{14}+{f}^{1234}, -{f}^{12}+i{f}^{24} \rangle\\
 	\mathrm{U}_0&=&\langle  {f}^{3}, -f^1+i{f}^{4}+{f}^{234}+i{f}^{123}, {f}^{2}, {f}^{134}, f^1+i{f}^{4}-{f}^{234}+i{f}^{123}, {f}^{124} \rangle,\\
 	\mathrm{GH}_\partial^{-1}&=&\langle \mathrm{U}_{-1}^1, \mathrm{U}_{-1}^4 \rangle,\;
 	\mathrm{GH}_\partial^{0}=\langle \mathrm{U}_{0}^2,  \mathrm{U}_{0}^6  \rangle,\;\mathrm{GH}_\partial^{1}=\langle \mathrm{U}_{1}^1  \rangle ,\\
 	\;\mathrm{GH_{BC}^{-1}}&=&\langle \mathrm{U}_{-1}^1, \mathrm{U}_{-1}^3, \mathrm{U}_{-1}^4  \rangle,\;\mathrm{GH_{BC}^{0}}=\langle  \mathrm{U}_{0}^6 \rangle,\;\mathrm{GH_{BC}^{1}}=\langle  \mathrm{U}_{1}^1,  \mathrm{U}_{1}^3, \mathrm{U}_{1}^4 \rangle,\\
 	\mathrm{GH}_A^{-1}&=&\langle \mathrm{U}_{-1}^1\rangle,\;\mathrm{GH}_A^{0}=\langle  \mathrm{U}_{0}^2,  \mathrm{U}_{0}^3,  \mathrm{U}_{0}^5,  \mathrm{U}_{0}^6  \rangle,\; \mathrm{GH}_A^{1}=\langle  \mathrm{U}_{1}^1  \rangle.
 \end{eqnarray*}

 \begin{eqnarray*}
 	A_{4,5}^{-1,\be} \be\not=-1 :\;\rho&=&{f}^{3}+i{f}^{4}+{f}^{124}-i{f}^{123}    ,\;\mathrm{d}\rho=-\be{f}^{34}+i\be{f}^{1234},\mathrm{Im}\bar\partial_{-1}=0,\\
 	\overline{\mathrm{L}}&=&\langle -i{f}^{2}+f_{{1}}, if^1+f_{{2}}, f_{{3}}-if_{{4}}, {f}^{4}+i{f}^{3}  \rangle,\\
 	\mathrm{U}_{-1}&=&\langle -i{f}^{23}+{f}^{24}, i{f}^{13}-{f}^{14}, -i{f}^{12}, -{f}^{34}+i{f}^{1234}  \rangle\\
 	\mathrm{U}_0&=&\langle i{f}^{3}-{f}^{4}-i{f}^{124}-{f}^{123}, -i{f}^{2}, {f}^{234}, f^1, {f}^{134}, -i{f}^{3}-{f}^{4}+i{f}^{124}-{f}^{123} \rangle,\\
 	\mathrm{GH}_\partial^{-1}&=&\langle \mathrm{U}_{-1}^3\rangle,\;
 	\mathrm{GH}_\partial^{0}=\langle \mathrm{U}_{0}^6   \rangle,\;\mathrm{GH}_\partial^{1}=\langle \mathrm{U}_{1}^3 \rangle ,\\
 	\;\mathrm{GH_{BC}^{-1}}&=&\langle \mathrm{U}_{-1}^3, \mathrm{U}_{-1}^4 \rangle,\;\mathrm{GH_{BC}^{0}}=\langle 0 \rangle,\;\mathrm{GH_{BC}^{1}}=\langle \mathrm{U}_{1}^3, \mathrm{U}_{1}^4 \rangle,\\
 	\mathrm{GH}_A^{-1}&=&\langle  \mathrm{U}_{-1}^3 \rangle,\;\mathrm{GH}_A^{0}=\langle \mathrm{U}_{0}^1,  \mathrm{U}_{0}^6  \rangle,\; \mathrm{GH}_A^{1}=\langle \mathrm{U}_{1}^3  \rangle.
 \end{eqnarray*}

 
 \begin{eqnarray*}
 	A_{4,5}^{\al,\al},\; \al\not=-1 :\;\rho&=& {f}^{2}+i{f}^{3}-{f}^{134}+i{f}^{124} ,\;\mathrm{d}\rho=-\al{f}^{24}-i\al{f}^{34},\mathrm{Im}\bar\partial_{-1}=\langle\rho\rangle,\\
 	\overline{\mathrm{L}}&=&\langle -i{f}^{4}+f_{{1}}, f_{{2}}-if_{{3}}, if^1+f_{{4}}, {f}^{3}+i{f}^{2} \rangle,\\
 	\mathrm{U}_{-1}&=&\langle i{f}^{24}-{f}^{34}, 1-i{f}^{14}, i{f}^{12}-{f}^{13}, -{f}^{23}+i{f}^{1234}  \rangle\\
 	\mathrm{U}_0&=&\langle  -i{f}^{4}, i{f}^{2}-{f}^{3}+i{f}^{134}+{f}^{124}, i{f}^{234}, -if^1, -i{f}^{2}-{f}^{3}-i{f}^{134}+{f}^{124}, -i{f}^{123}  \rangle,\\
 	\mathrm{GH}_\partial^{-1}&=&\langle \mathrm{U}_{-1}^2\rangle,\;
 	\mathrm{GH}_\partial^{0}=\langle \mathrm{U}_{0}^1  \rangle,\;\mathrm{GH}_\partial^{1}=\langle 0 \rangle ,\\
 	\;\mathrm{GH_{BC}^{-1}}&=&\langle \mathrm{U}_{-1}^2 \rangle,\;\mathrm{GH_{BC}^{0}}=\langle\mathrm{U}_{0}^1 \rangle,\;\mathrm{GH_{BC}^{1}}=\langle \mathrm{U}_{1}^2 \rangle,\\
 	\mathrm{GH}_A^{-1}&=&\langle 0 \rangle,\;\mathrm{GH}_A^{0}=\langle \mathrm{U}_{0}^1, \mathrm{U}_{0}^4   \rangle,\; \mathrm{GH}_A^{1}=\langle 0  \rangle.
 \end{eqnarray*}

 \begin{eqnarray*}
 	A_{4,5}^{\al,1},\;\al\not=-1 :\;\rho&=&f^1-i{f}^{3}+{f}^{234}-i{f}^{124}  ,\;\mathrm{d}\rho=-{f}^{14}+i{f}^{34},\mathrm{Im}\bar\partial_{-1}=\langle\rho\rangle,\\
 	\overline{\mathrm{L}}&=&\langle f_{{1}}+if_{{3}}, -i{f}^{4}+f_{{2}}, i{f}^{2}+f_{{4}}, {f}^{3}-if^1  \rangle,\\
 	\mathrm{U}_{-1}&=&\langle 1-i{f}^{24}, i{f}^{14}+{f}^{34}, -i{f}^{12}+{f}^{23}, -{f}^{13}-i{f}^{1234} \rangle\\
 	\mathrm{U}_0&=&\langle  {f}^{4}, {f}^{2}, if^1-{f}^{3}-i{f}^{234}+{f}^{124}, if^1+{f}^{3}-i{f}^{234}-{f}^{124}, {f}^{134}, {f}^{123}
 	\rangle,\\
 	\mathrm{GH}_\partial^{-1}&=&\langle \mathrm{U}_{-1}^1\rangle,\;
 	\mathrm{GH}_\partial^{0}=\langle \mathrm{U}_{0}^1   \rangle,\;\mathrm{GH}_\partial^{1}=\langle  0  \rangle ,\\
 	\;\mathrm{GH_{BC}^{-1}}&=&\langle \mathrm{U}_{-1}^1 \rangle,\;\mathrm{GH_{BC}^{0}}=\langle\mathrm{U}_{0}^1 \rangle,\;\mathrm{GH_{BC}^{1}}=\langle \mathrm{U}_{1}^1 \rangle,\\
 	\mathrm{GH}_A^{-1}&=&\langle0 \rangle,\;\mathrm{GH}_A^{0}=\langle \mathrm{U}_{0}^1, \mathrm{U}_{0}^2  \rangle,\; \mathrm{GH}_A^{1}=\langle 0 \rangle.
 \end{eqnarray*}

 \begin{eqnarray*}
 	A_{4,5}^{-1,1} :\;\rho&=&f^1-i{f}^{3}+{f}^{234}-i{f}^{124}  ,\;\mathrm{d}\rho=-{f}^{14}+i{f}^{34},\mathrm{Im}\bar\partial_{-1}=0,\\
 	\overline{\mathrm{L}}&=&\langle f_{{1}}+if_{{3}}, -i{f}^{4}+f_{{2}}, i{f}^{2}+f_{{4}}, {f}^{3}-if^1  \rangle,\\
 	\mathrm{U}_{-1}&=&\langle 1-i{f}^{24}, i{f}^{14}+{f}^{34}, -i{f}^{12}+{f}^{23}, -{f}^{13}-i{f}^{1234} \rangle\\
 	\mathrm{U}_0&=&\langle  {f}^{4}, {f}^{2}, if^1-{f}^{3}-i{f}^{234}+{f}^{124}, if^1+{f}^{3}-i{f}^{234}-{f}^{124}, {f}^{134}, {f}^{123}
 	\rangle,\\
 	\mathrm{GH}_\partial^{-1}&=&\langle \mathrm{U}_{-1}^1, \mathrm{U}_{-1}^3\rangle,\;
 	\mathrm{GH}_\partial^{0}=\langle \mathrm{U}_{0}^1 , \mathrm{U}_{0}^4   \rangle,\;\mathrm{GH}_\partial^{1}=\langle \mathrm{U}_{1}^3  \rangle ,\\
 	\;\mathrm{GH_{BC}^{-1}}&=&\langle \mathrm{U}_{-1}^1, \mathrm{U}_{-1}^2,  \mathrm{U}_{-1}^3 \rangle,\;\mathrm{GH_{BC}^{0}}=\langle\mathrm{U}_{0}^1\rangle,\;\mathrm{GH_{BC}^{1}}=\langle \mathrm{U}_{1}^1, \mathrm{U}_{1}^2, \mathrm{U}_{1}^3 \rangle,\\
 	\mathrm{GH}_A^{-1}&=&\langle \mathrm{U}_{-1}^3 \rangle,\;\mathrm{GH}_A^{0}=\langle \mathrm{U}_{0}^1 , \mathrm{U}_{0}^2, \mathrm{U}_{0}^3, \mathrm{U}_{0}^4, \rangle,\; \mathrm{GH}_A^{1}=\langle \mathrm{U}_{1}^3 \rangle.
 \end{eqnarray*}
 
 \begin{eqnarray*}
 	A_{4,6}^{\al,\be}  :\;\rho&=& {f}^{2}-i{f}^{3}+{f}^{134}+i{f}^{124} ,\;\mathrm{d}\rho=(-1+i\be)(i{f}^{24}+ {f}^{34}),\mathrm{Im}\bar\partial_{-1}=\langle\rho\rangle,\\
 	\overline{\mathrm{L}}&=&\langle -i{f}^{4}+f_{{1}}, f_{{2}}+if_{{3}}, if^1+f_{{4}}, {f}^{3}-i{f}^{2}   \rangle,\\
 	\mathrm{U}_{-1}&=&\langle i{f}^{24}+{f}^{34}, 1-i{f}^{14}, i{f}^{12}+{f}^{13}, -{f}^{23}+i{f}^{1234} \rangle\\
 	\mathrm{U}_0&=&\langle {f}^{4}, i{f}^{2}+{f}^{3}-i{f}^{134}+{f}^{124}, {f}^{234}
 	, f^1, i{f}^{2}-{f}^{3}-i{f}^{134}-{f}^{124}, {f}^{123}  \rangle,\\
 	\mathrm{GH}_\partial^{-1}&=&\langle \mathrm{U}_{-1}^2\rangle,\;
 	\mathrm{GH}_\partial^{0}=\langle \mathrm{U}_{0}^1  \rangle,\;\mathrm{GH}_\partial^{1}=\langle0  \rangle ,\\
 	\;\mathrm{GH_{BC}^{-1}}&=&\langle \mathrm{U}_{-1}^2 \rangle,\;\mathrm{GH_{BC}^{0}}=\langle\mathrm{U}_{0}^1\rangle,\;\mathrm{GH_{BC}^{1}}=\langle \mathrm{U}_{1}^2\rangle,\\
 	\mathrm{GH}_A^{-1}&=&\langle0 \rangle,\;\mathrm{GH}_A^{0}=\langle \mathrm{U}_{0}^1 , \mathrm{U}_{0}^4  \rangle,\; \mathrm{GH}_A^{1}=\langle 0  \rangle.
 \end{eqnarray*}
 
 \begin{eqnarray*}
 	A_{4,6}^{\al,0}:\;\rho&=&f^1+i{f}^{4}+{f}^{234}-i{f}^{123} ,\;\mathrm{d}\rho=\al(-{f}^{14}+i{f}^{1234}),\mathrm{Im}\bar\partial_{-1}=0,\\
 	\overline{\mathrm{L}}&=&\langle f_{{1}}-if_{{4}}, {f}^{2}-if_{{3}}, {f}^{3}+if_{{2}}, {f}^{4}+if^1   \rangle,\\
 	\mathrm{U}_{-1}&=&\langle  1-i{f}^{23}, -{f}^{12}+i{f}^{24}, -{f}^{13}+i{f}^{34}, -{f}^{14}+i{f}^{1234}  \rangle\\
 	\mathrm{U}_0&=&\langle  {f}^{2}, {f}^{3}, -if^1-{f}^{4}+i{f}^{234}-{f}^{123}, -if^1+2{f}^{4}+i{f}^{234}+{f}^{123}, {f}^{124}, {f}^{134}  \rangle,\\
 	\mathrm{GH}_\partial^{-1}&=&\langle \mathrm{U}_{-1}^1\rangle,\;
 	\mathrm{GH}_\partial^{0}=\langle {\mathrm{U}_{0}^3}
 	 \rangle,\;\mathrm{GH}_\partial^{1}=\langle \mathrm{U}_{1}^1  \rangle ,\\
 	\;\mathrm{GH_{BC}^{-1}}&=&\langle \mathrm{U}_{-1}^1,  \mathrm{U}_{-1}^4 \rangle,\;\mathrm{GH_{BC}^{0}}=\langle 0 \rangle,\;\mathrm{GH_{BC}^{1}}=\langle \mathrm{U}_{1}^1, \mathrm{U}_{1}^4 \rangle,\\
 	\mathrm{GH}_A^{-1}&=&\langle \mathrm{U}_{-1}^1 \rangle,\;\mathrm{GH}_A^{0}=\langle \mathrm{U}_{0}^3 , \mathrm{U}_{0}^4  \rangle,\; \mathrm{GH}_A^{1}=\langle \mathrm{U}_{1}^1  \rangle.
 \end{eqnarray*}
 
 
 \begin{eqnarray*}
 	A_{4,9}^{-\frac{1}{2}} :\;\rho&=&  {f}^{2}-i{f}^{4}-{f}^{134}+i{f}^{123}  ,\;\mathrm{d}\rho=-{f}^{24}-i{f}^{1234},\mathrm{Im}\bar\partial_{-1}=0,\\
 	\overline{\mathrm{L}}&=&\langle f_{{2}}+if_{{4}},\;if^1+f_{{3}},\;i{f}^{4}+{f}^{2},\;{f}^{3}+if_{{1}}\rangle,\\
 	\mathrm{U}_{-1}&=&\langle 1-i{f}^{13}, 2i{f}^{12}+2{f}^{14}, -i{f}^{24}+{f}^{1234}, -2{f}^{23}-2i{f}^{34}  \rangle\\
 	\mathrm{U}_0&=&\langle  f^1, {f}^{2}+i{f}^{4}+{f}^{134}+i{f}^{123}, {f}^{3}, {f}^{124}, {f}^{2}-i{f}^{4}+{f}^{134}-i{f}^{123}, {f}^{234}  \rangle,\\
 	\mathrm{GH}_\partial^{-1}&=&\langle \mathrm{U}_{-1}^1\rangle,\;
 	\mathrm{GH}_\partial^{0}=\langle \mathrm{U}_{0}^2 \rangle,\;\mathrm{GH}_\partial^{1}=\langle \mathrm{U}_{1}^1  \rangle ,\\
 	\;\mathrm{GH_{BC}^{-1}}&=&\langle \mathrm{U}_{-1}^1, \mathrm{U}_{-1}^3  \rangle,\;\mathrm{GH_{BC}^{0}}=\langle 0 \rangle,\;\mathrm{GH_{BC}^{1}}=\langle \mathrm{U}_{1}^1, \mathrm{U}_{1}^3 \rangle,\\
 	\mathrm{GH}_A^{-1}&=&\langle \mathrm{U}_{-1}^1 \rangle,\;\mathrm{GH}_A^{0}=\langle \mathrm{U}_{0}^2 , \mathrm{U}_{0}^5 \rangle,\; \mathrm{GH}_A^{1}=\langle \mathrm{U}_{1}^1 \rangle.
 \end{eqnarray*}
 
 \begin{eqnarray*}
 	A_{4,9}^{1}   :\;\rho&=&-{f}^{2}-i{f}^{3}+{f}^{134}-i{f}^{124}   ,\;\mathrm{d}\rho={f}^{24}+i{f}^{34},\mathrm{Im}\bar\partial_{-1}=\langle\rho\rangle,\\
 	\overline{\mathrm{L}}&=&\langle -i{f}^{4}+f_{{1}}, f_{{2}}-if_{{3}}, if^1+f_{{4}}, -i{f}^{3}+{f}^{2}   \rangle,\\
 	\mathrm{U}_{-1}&=&\langle -i{f}^{24}+{f}^{34}, -1+i{f}^{14}, -i{f}^{12}+{f}^{13}, -i{f}^{23}-{f}^{1234}  \rangle\\
 	\mathrm{U}_0&=&\langle  {f}^{4}, {f}^{2}+{f}^{3}-i{f}^{134}-{f}^{124}, {f}^{234}, f^1, {f}^{2}-i{f}^{3}+{f}^{134}+i{f}^{124}, {f}^{123} \rangle,\\
 	\mathrm{GH}_\partial^{-1}&=&\langle \mathrm{U}_{-1}^4-2\mathrm{U}_{-1}^2\rangle,\;
 	\mathrm{GH}_\partial^{0}=\langle \mathrm{U}_{0}^1  \rangle,\;\mathrm{GH}_\partial^{1}=\langle 0 \rangle ,\\
 	\;\mathrm{GH_{BC}^{-1}}&=&\langle \mathrm{U}_{-1}^4-2\mathrm{U}_{-1}^2 \rangle,\;\mathrm{GH_{BC}^{0}}=\langle\mathrm{U}_{0}^1\rangle,\;\mathrm{GH_{BC}^{1}}=\langle \mathrm{U}_{1}^4-2\mathrm{U}_{1}^2 \rangle,\\
 	\mathrm{GH}_A^{-1}&=&\langle 0 \rangle,\;\mathrm{GH}_A^{0}=\langle \mathrm{U}_{0}^1 , \mathrm{U}_{0}^6  \rangle,\; \mathrm{GH}_A^{1}=\langle 0  \rangle.
 \end{eqnarray*}
 
 
 \begin{eqnarray*}
 	A_{4,11}^{\al}    :\;\rho&=&{f}^{2}-i{f}^{3}+{f}^{134}+i{f}^{124}   ,\;\mathrm{d}\rho=(i+\al) (-{f}^{24}+i {f}^{34}),\mathrm{Im}\bar\partial_{-1}=\langle\rho\rangle,\\
 	\overline{\mathrm{L}}&=&\langle -i{f}^{4}+f_{{1}}, f_{{2}}+if_{{3}}, if^1+f_{{4}}, {f}^{3}-i{f}^{2}  \rangle,\\
 	\mathrm{U}_{-1}&=&\langle i{f}^{24}+{f}^{34}, 1-i{f}^{14}, i{f}^{12}+{f}^{13}, -{f}^{23}+i{f}^{1234}  \rangle\\
 	\mathrm{U}_0&=&\langle {f}^{4}, i{f}^{2}+{f}^{3}-i{f}^{134}+{f}^{124}, i{f}^{234}, f^1, i{f}^{2}-{f}^{3}-i{f}^{134}-{f}^{124}, {f}^{123} \rangle,\\
 	\mathrm{GH}_\partial^{-1}&=&\langle 4\al i\mathrm{U}_{-1}^2-\mathrm{U}_{-1}^4 \rangle,\;
 	\mathrm{GH}_\partial^{0}=\langle \mathrm{U}_{0}^1 \rangle,\;\mathrm{GH}_\partial^{1}=\langle 0  \rangle ,\\
 	\;\mathrm{GH_{BC}^{-1}}&=&\langle 4\al i\mathrm{U}_{-1}^2-\mathrm{U}_{-1}^4 \rangle,\;\mathrm{GH_{BC}^{0}}=\langle \mathrm{U}_{0}^1  \rangle,\;\mathrm{GH_{BC}^{1}}=\langle 4\al i\mathrm{U}_{1}^2-\mathrm{U}_{1}^4\rangle,\\
 	\mathrm{GH}_A^{-1}&=&\langle 0 \rangle,\;\mathrm{GH}_A^{0}=\langle \mathrm{U}_{0}^1,  8 \mathrm{U}_{0}^4+7i \mathrm{U}_{0}^6 \rangle,\; \mathrm{GH}_A^{1}=\langle 0 \rangle.
 \end{eqnarray*}
 
 \begin{eqnarray*}
 	A_{4,12}    :\;\rho&=& -f^1+i{f}^{2}+{f}^{234}+i{f}^{134}  ,\;\mathrm{d}\rho={f}^{13}+i{f}^{14}-i{f}^{23}+{f}^{24},\mathrm{Im}\bar\partial_{-1}=\langle\rho\rangle,\\
 	\overline{\mathrm{L}}&=&\langle f_{{1}}+if_{{2}}, -i{f}^{4}+f_{{3}}, i{f}^{3}+f_{{4}}, i{f}^{2}+f^1   \rangle,\\
 	\mathrm{U}_{-1}&=&\langle  1-i{f}^{34}, i{f}^{14}+{f}^{24}, i{f}^{13}+{f}^{23}, i{f}^{12}+{f}^{1234} \rangle\\
 	\mathrm{U}_0&=&\langle {f}^{4}, {f}^{3}, f^1+i{f}^{2}+{f}^{234}-i{f}^{134}, if^1+{f}^{2}+i{f}^{234}-{f}^{134}, {f}^{124}, {f}^{123} \rangle,\\
 	\mathrm{GH}_\partial^{-1}&=&\langle \mathrm{U}_{-1}^1\rangle,\;
 	\mathrm{GH}_\partial^{0}=\langle \mathrm{U}_{0}^1 , \mathrm{U}_{0}^2   \rangle,\;\mathrm{GH}_\partial^{1}=\langle \mathrm{U}_{1}^1  \rangle ,\\
 	\;\mathrm{GH_{BC}^{-1}}&=&\langle \mathrm{U}_{-1}^1 \rangle,\;\mathrm{GH_{BC}^{0}}=\langle\mathrm{U}_{0}^1 , \mathrm{U}_{0}^2 \rangle,\;\mathrm{GH_{BC}^{1}}=\langle \mathrm{U}_{1}^1 \rangle,\\
 	\mathrm{GH}_A^{-1}&=&\langle \mathrm{U}_{-1}^1 \rangle,\;\mathrm{GH}_A^{0}=\langle \mathrm{U}_{0}^1 , \mathrm{U}_{0}^2 \rangle,\; \mathrm{GH}_A^{1}=\langle \mathrm{U}_{1}^1  \rangle.
 \end{eqnarray*}

\section{Tables}\label{section5}

{\renewcommand*{\arraystretch}{1.4}
	\begin{center}
		
		\begin{tabular}{|c|l|c|c|c|c|}
			\hline
			Lie algebra   & Nonzero brackets   & Type 2 &Type 1  & Type 0 &Lie algebra in \cite{O1} \\
			\hline
			$A_2\oplus2A_1$& $[f_1,f_2]=f_2$ & Yes& Yes & Yes & $\mathfrak{r}\mathfrak{r}_{3,0}$  \\
			\hline
			$2A_2$& $[f_1,f_2]=f_2$, $[f_3,f_4]=f_4$ & Yes& Yes & Yes &   $\mathfrak{r}_2\mathfrak{r}_2$ \\
			\hline
			$A_{3,2}\oplus A_1$& $[f_1,f_3]=f_1$, $[f_2,f_3]=f_1+f_2$ &No &No  &No & $\mathfrak{r}\mathfrak{r}_{3}$ \\
			\hline
			$A_{3,3}\oplus A_1$& $[f_1,f_3]=f_1$, $[f_2,f_3]=f_2$ & Yes& Yes & No  & $\mathfrak{r}\mathfrak{r}_{3,1}$ \\
			\hline
			$A_{3,5}^\al\oplus A_1$&  \multirow{2}{*}{$[f_1,f_3]=f_1$, $[f_2,f_3]=\al f_2$} &\multirow{2}{*}{No} &\multirow{2}{*}{No}  &  \multirow{2}{*}{No} & $\mathfrak{r}\mathfrak{r}_{3,\al}$  \\
			$0<|\al|<1$&&&&& $0<|\al|<1$\\
			\hline
			$A_{3,7}^\al\oplus A_1$&$[f_1,f_3]=\al f_1-f_2$ &\multirow{2}{*}{No} &\multirow{2}{*}{ Yes}&  \multirow{2}{*}{No}   &$\mathfrak{r}\mathfrak{r}^\prime_{3,\al}$   \\
			$\al>0$& $[f_2,f_3]=f_1+\al f_2$&&&&$\al>0$\\
			\hline
			$A^\al_{4,2}$ $\al\notin\{0,-2\}$& $[f_1,f_4]=\al f_1$, $[f_2,f_4]=f_2$ & \multirow{2}{*}{No}&\multirow{2}{*}{No}  &  \multirow{2}{*}{No} & \multirow{2}{*}{$\mathfrak{r}_{4,\frac{1}{\al}}$} \\
			$\al\not\in\{-1,1\}$&$[f_3,f_4]=f_2+f_3$&&&&\\
			\hline
			\multirow{2}{*}{$A^1_{4,2}$} & $[f_1,f_4]=f_1$, $[f_2,f_4]=f_2$ & \multirow{2}{*}{Yes}&\multirow{2}{*}{Yes}  & \multirow{2}{*}{No} & \multirow{2}{*}{$\mathfrak{r}_{4,1}$} \\
			&$[f_3,f_4]=f_2+f_3$&&&&\\
			\hline
			\multirow{2}{*}{$A^{-1}_{4,2}$}& $[f_1,f_4]=- f_1$, $[f_2,f_4]=f_2$ & \multirow{2}{*}{No}&\multirow{2}{*}{Yes}  & \multirow{2}{*}{Yes} & \multirow{2}{*}{$\mathfrak{r}_{4,-1}$} \\
			&$[f_3,f_4]=f_2+f_3$&&&&\\
			\hline
			$A_{4,3}$&  $[f_1,f_4]=f_1$, $[f_3,f_4]=f_2$ &No & Yes &Yes   &  $\mathfrak{r}_{4,0}$ \\
			\hline
			\multirow{2}{*}{$A_{4,4}$}& $[f_1,f_4]=f_1$, $[f_2,f_4]=f_1+f_2$ &\multirow{2}{*}{No} &\multirow{2}{*}{No} & \multirow{2}{*}{No}  &\multirow{2}{*}{$\mathfrak{r}_{4}$}  \\
			&$[f_3,f_4]=f_2+f_3$&&&&\\
			
			\hline
			$A^{\al,\be}_{4,6}$ $\al\not=0$& $[f_1,f_4]=\al f_1$, $[f_2,f_4]=\be f_2-f_3$ &\multirow{2}{*}{Yes} & \multirow{2}{*}{ Yes} &\multirow{2}{*}{ No}  & \multirow{2}{*}{ $\mathfrak{r}^\prime_{4,\frac{\be}{\al},\frac{1}{\al}}$ }\\
			$\al\not=-2\be$, $\be>0$&$[f_3,f_4]=f_2+\be f_3$&&&&\\
			\hline
			$A^{\al,0}_{4,6}$ & $[f_1,f_4]=\al f_1$, $[f_2,f_4]=-f_3$ &\multirow{2}{*}{Yes} & \multirow{2}{*}{ Yes} & \multirow{2}{*}{Yes}  & \multirow{2}{*}{$\mathfrak{r}^\prime_{4,0,\frac{1}{\al}}$} \\
			$\al\not=0$	&$[f_3,f_4]=f_2$&&&&\\
			\hline
			\multirow{2}{*}{  $A_{4,7}$}& $[f_2,f_3]=f_1$, $[f_1,f_4]=2f_1$ &\multirow{2}{*}{Yes} & \multirow{2}{*}{No} &  \multirow{2}{*}{Yes} & \multirow{2}{*}{$\mathfrak{h}_4$ } \\
			&$[f_2,f_4]=f_2$, $[f_3,f_4]=f_2+f_3$&&&&\\
			\hline
			$A_{4,9}^\be$& $[f_2,f_3]=f_1$, $[f_3,f_4]=\be f_3$ &\multirow{2}{*}{ Yes}& \multirow{1}{*}{ Yes} & \multirow{2}{*}{Yes} & \multirow{2}{*}{$\mathfrak{d}_{4,\frac{1}{1+\be}}$ } \\
			$-1<\be\leq1$ & $[f_1,f_4]=(1+\be)f_1$, $[f_2,f_4]=f_2$&&$\be\in\{1,-\frac12\}$&&\\
			\hline
			$A_{4,11}^\al$&  $[f_1,f_4]=2\al f_1$, $[f_3,f_4]=f_2+\al f_3$ &\multirow{2}{*}{Yes} & \multirow{2}{*}{ Yes} & \multirow{2}{*}{Yes} &$\mathfrak{d}^\prime_{4,2\al}$  \\
			$\al>0$ & $[f_2,f_3]=f_1$, $[f_2,f_4]=\al f_2-f_3$ &&&&$\al>0$\\
			\hline
			\multirow{2}{*}{$A_{4,12}$}& $[f_1,f_3]=f_1$, $[f_2,f_3]=f_2$ & \multirow{2}{*}{Yes}& \multirow{2}{*}{ Yes} & \multirow{2}{*}{Yes}  &\multirow{2}{*}{$\mathfrak{r}^\prime_2$}  \\
			&$[f_1,f_4]=-f_2$, $[f_2,f_4]=f_1$&&&&\\
			\hline
		\end{tabular}\captionof{table}{Non unimodular four dimensional Lie algebras.\label{1} }
	\end{center}}

	\begin{center}
		
		\begin{tabular}{|c|l|c|c|c|c|}
			\hline
			Lie algebra   & Nonzero brackets   & Type 2 &Type 1  & Type 0 &Lie algebra in \cite{O1} \\
			\hline
			$A_{4,5}^{\al,\be}$, $\al+\be\not=-1,0$&  $[f_1,f_4]=f_1$, $[f_2,f_4]=\al f_2$   &\multirow{2}{*}{ No}&\multirow{2}{*}{ No} & \multirow{2}{*}{ No}  &\multirow{2}{*}{$\mathfrak{r}_{4,\al,\be}$ }  \\
			$-1<\al<\be<1$& $[f_3,f_4]=\be f_3$&    &  &   &  \\		
			\hline
			$A_{4,5}^{-\al,\al}$, $\al\not=0$& $[f_1,f_4]=f_1$, $[f_2,f_4]=-\al f_2$  &\multirow{2}{*}{ No}&\multirow{2}{*}{ Yes}& \multirow{2}{*}{ Yes}& \multirow{2}{*}{$\mathfrak{r}_{4,-\al,\al}$}\\
			$\al<1$                      &$[f_3,f_4]=\al f_3$&&&&\\
			\hline
			$A_{4,5}^{-1,\be}$, $\be\not=0$ & $[f_1,f_4]=f_1$, $[f_2,f_4]=-f_2$  &\multirow{2}{*}{ No}&\multirow{2}{*}{ Yes}&\multirow{2}{*}{ Yes}&\multirow{2}{*}{ $\mathfrak{r}_{4,-1,-|\be|}$
				}\\
			$-1<\be<1$                      &$[f_3,f_4]=\be f_3$&&&&\\
			\hline
			$A_{4,5}^{\al,\al}$, $\al\not=0$& $[f_1,f_4]=f_1$, $[f_2,f_4]=\al f_2$  &\multirow{2}{*}{Yes}&\multirow{2}{*}{ Yes}&\multirow{2}{*}{No}&\multirow{2}{*}{ $\mathfrak{r}_{4,\al,\al}$}\\
			$-1<\al<1$                           &$[f_3,f_4]=\al f_3$&   &&&\\
			\hline
			$A_{4,5}^{\al,1}$, $\al\not=0$& $[f_1,f_4]=f_1$, $[f_2,f_4]=\al f_2$  &\multirow{2}{*}{ Yes}&\multirow{2}{*}{ Yes}&\multirow{2}{*}{No}&\multirow{2}{*}{ $\mathfrak{r}_{4,\al,1}$}\\
			$-1<\al<1$                                 &$[f_3,f_4]=f_3$&&&&\\
			\hline
			\multirow{2}{*}{$A_{4,5}^{-1,1}$}& $[f_1,f_4]=f_1$, $[f_2,f_4]=- f_2$  &\multirow{2}{*}{ Yes}&\multirow{2}{*}{ Yes}& \multirow{2}{*}{ Yes}& \multirow{2}{*}{$\mathfrak{r}_{4,-1,-1}$}\\
			&$[f_3,f_4]= f_3$&&&&\\
			\hline
			\multirow{2}{*}{ $A_{4,5}^{1,1}$}& $[f_1,f_4]=f_1$, $[f_2,f_4]= f_2$  &\multirow{2}{*}{No}&\multirow{2}{*}{ Yes}&\multirow{2}{*}{No}&\multirow{2}{*}{ $\mathfrak{r}_{4,1,1}$}\\
			&$[f_3,f_4]= f_3$&&&&\\
			\hline
			\end{tabular}
	\end{center}

	{\renewcommand*{\arraystretch}{1.4}
	\begin{center}
		\begin{tabular}{|c|l|c|c|c|c|}
			\hline
			Lie algebra&Non zero brackets&Type 2&Type 1&Type 0&Lie algebra in \cite{O1}\\
			\hline
			$A_{3,1}\oplus A_1$&$[f_2,f_3]=f_1$&Yes&Yes&Yes&$\mathfrak{rh}_3$\\
			\hline
			$A_{3,4}\oplus A_1$&$[f_1,f_3]=f_1,[f_2,f_3]=-f_2$&No&Yes&Yes&$\mathfrak{rr}_{3,-1}$\\
			\hline
			$A_{3,6}\oplus A_1$&$[f_1,f_3]=-f_2,[f_2,f_3]=f_1$&Yes&Yes&Yes&$\mathfrak{rr'}_{3,0}$\\
			\hline
			$A_{3,8}\oplus A_1$&$[f_1,f_2]=-f_3,[f_3,f_1]=f_2,[f_2,f_3]=f_1$&Yes&Yes&No&
			$\mathrm{sl}(2,\R)\oplus A_1$\\
			\hline
			$A_{3,9}\oplus A_1$&$[f_1,f_2]=f_3,[f_3,f_1]=f_2,[f_2,f_3]=f_1$&Yes&Yes&No&$\mathrm{su}(2)\oplus A_1$\\
			\hline
			$A_{4,1}$&$[f_2,f_4]=f_1,[f_3,f_4]=f_2$&No&Yes&Yes&$\mathfrak{n}_4$\\
			\hline
			\multirow{2}{*}{$A_{4,2}^{-2}$}&$[f_1,f_4]=-2f_1,[f_2,f_4]=f_2,$&No&No&No&$\mathfrak{r}_{4,-\frac12}$\\
			&$[f_3,f_4]=f_2+f_3$&&&&\\
			\hline
			\multirow{2}{*}{$A_{4,5}^{-\frac12,-\frac12}$}&$
			[f_1,f_4]=f_1,[f_2,f_4]=-\frac12 f_2,$&Yes&Yes &No&$\mathfrak{r}_{4,-\frac12,-\frac12}$\\
			&$[f_3,f_4]=-\frac12f_3$&&&&\\
			\hline
			\multirow{2}{*}{$A_{4,5}^{\al,-1-\al},\al\in(-1,-\frac12)$}&$
			[f_1,f_4]=f_1,[f_2,f_4]=\al f_2,$&No&No &No&$\mathfrak{r}_{4,\al,-1-\al}$\\
			&$[f_3,f_4]=-(1+\al)f_3$&&&&\\
			\hline
			\multirow{2}{*}{$A_{4,6}^{-2\be,\be},\be>0$}&$
			[f_1,f_4]=-2\be f_1,[f_2,f_4]=\be f_2-f_3,$&Yes&Yes &No&$\mathfrak{r'}_{4,-\frac12,-\frac1{2\be}}$\\
			&$[f_3,f_4]=f_2+\be f_3$&&&&\\
			\hline
			$A_{4,8}$&$
			[f_2,f_3]= f_1,[f_2,f_4]= f_2,[f_3,f_4]=- f_3$&Yes&No &No&$\mathfrak{d}_4$\\
			\hline
			$A_{4,10}$&$
			[f_2,f_3]= f_1,[f_2,f_4]= -f_3,[f_3,f_4]= f_2$&Yes&Yes &No&$\mathfrak{d'}_{4,0}$\\
			\hline
		\end{tabular}\captionof{table}{Unimodular four dimensional Lie algebras.\label{2}}
	\end{center}}

			{\renewcommand*{\arraystretch}{1.2}
				\begin{center}
					\begin{tabular}{|c|ll|}
						\hline
						The Lie algebra & \multicolumn{2}{c|}{Generalized complex structures  of type 1 and their pure spinors}\\
						
						\hline
						\multirow{2}{*}{$A_2\oplus2A_1$ }  &$J=E_{12}-E_{21}$, $ R=-f^{\#}_{34}$, $\si=-f_{\#}^{34}$  & $\rho=f^1+if^2+f^{234}-if^{134}$,\quad$\mathrm{d}\rho=f^1.\rho$\\
						\cline{2-3}
						&$J=E_{34}-E_{43}$, $ R=-f^{\#}_{12}$, $\si=-f_{\#}^{12}$ & $\rho=-{f}^{3}-i{f}^{4}-{f}^{124}+i{f}^{123}$,\quad$\mathrm{d}\rho=0$\\
						\hline
						\hline
						\multirow{3}{*}{$2A_2$ }  &$J=E_{34}-E_{43}+E_{24}$&$\rho=ik{f}^{3}-k{f}^{4}+i{f}^{124}+{f}^{123}-i{f}^{134}$\\
						& $R=-kf^{\#}_{12}$, $\si=-\frac{1}{k}f_{\#}^{12}+\frac{1}{k} f_\#^{13}$&$\mathrm{d}\rho=-f^3.\rho$\\
						\cline{2-3}
						&$J=E_{34}-E_{43}$, $R=-f^{\#}_{12}$, $\si=-f_{\#}^{12}$&$\rho=i{f}^{3}-{f}^{4}+i{f}^{124}+{f}^{123}$, $\mathrm{d}\rho=-f^3.\rho$\\
						\hline
						\hline
						$A_{33}\oplus A_1$  &$J= E_{12}-E_{21}$, $ R=-f^{\#}_{34}$, $\si=-f_{\#}^{34}$&$\rho= if^1-{f}^{2}+i{f}^{234}+{f}^{134}$,\quad  $\mathrm{d}\rho=f^3.\rho$\\
						\hline\hline
						$A_{3,7}^{\al}\oplus A_1$ &$J=\pm E_{12}\mp E_{21}$, $ R=-f^{\#}_{34}$, $\si=-f_{\#}^{34}$ &$\rho=if^1\mp f^2\pm if^{234}+f^{134}$,\;$\mathrm{d}\rho=(\mp f_4+\al f^3).\rho$\\
						\hline
						\hline
						\multirow{2}{*}{$A_{4,2}^{-1}$}      &$J=\la E_{11}+\la E_{22}+E_{34}-E_{43}$&
						$\rho=-i{f}^{3}+{f}^{4}+(\la-i) {f}^{124}+(1+i\la){f}^{123}$\\
						& $ R=-f^{\#}_{12}$, $\si=-(\la^2+1) f_\#^{12}$&$\mathrm{d}\rho=f^4.\rho$\\
						\hline
						$A_{4,2}^{1}$ & $J=-E_{13}+E_{31}$, $ R=-kf^{\#}_{24}$, $\si=-\frac{1}{k}f_{\#}^{24}$& $\rho=ikf^1+k{f}^{3}+i{f}^{234}+{f}^{124}$,\quad$\mathrm{d}\rho=f^4.\rho$\\
						\hline
						\hline
						\multirow{1}{*}{$A_{4,3}$}   &$J=E_{14}- E_{41}$, $R=-f^{\#}_{23}$, $\si=-f_{\#}^{23}$& $\rho=f^1+i{f}^{4}+{f}^{234}-i{f}^{123}$, $\;\mathrm{d}\rho=f^4.\rho$\\
						\hline
						\hline
						$A_{4,5}^{-\al,\al}$, $|\al|<1$ &$J=E_{14}-E_{41}$, $ R=-f^{\#}_{23}$, $\si=-f_{\#}^{23}$& $\rho=f^1+i{f}^{4}+{f}^{234}-i{f}^{123}$,\quad$\mathrm{d}\rho=f^4.\rho $ \\
						\hline
						$A_{4,5}^{-1,\be}$, $|\be|<1$& $J=E_{34}-E_{43}$, $ R=-f^{\#}_{12}$, $\si=-f_{\#}^{12}$&$\rho={f}^{3}+i{f}^{4}+{f}^{124}-i{f}^{123}$,\quad$\mathrm{d}\rho=f^4.\rho$\\
						\hline
						$A_{4,5}^{\al,\al}  $, $-1<\al\leq1$&$J=E_{23}-E_{32}$, $ R=-f^{\#}_{14}$, $\si=-f_{\#}^{14}$&$\rho={f}^{2}+i{f}^{3}-{f}^{134}+i{f}^{124}$,\quad$\mathrm{d}\rho=\al f^4.\rho$\\
						\hline
						$A_{4,5}^{\al,1}$,    $|\al|<1$&$J=-E_{13}+ E_{31}$, $ R=-f^{\#}_{24}$, $\si=-f_{\#}^{24}$&$\rho=f^1-i{f}^{3}+{f}^{234}-i{f}^{124}$,\quad$\mathrm{d}\rho=f^4.\rho$\\
						\hline
						\hline
						\multirow{2}{*}{$A_{4,5}^{-1,1}$}&$J=E_{14}-E_{41}$, $ R=-f^{\#}_{23}$, $\si=-f_{\#}^{23}$& $\rho=f^1+i{f}^{4}+{f}^{234}-i{f}^{123}$,\quad$\mathrm{d}\rho=f^4.\rho $\\
						\cline{2-3}
						& $J=-E_{13}+ E_{31}$, $ R=-f^{\#}_{24}$, $\si=-f_{\#}^{24}$&$\rho=f^1-i{f}^{3}+{f}^{234}-i{f}^{124}$,\quad$\mathrm{d}\rho=f^4.\rho$\\
						\hline
						\hline
						$A_{4,6}^{\al,\be}$, $\be>0$ &$J=\mp E_{23}\pm E_{32}$, $R=-f^{\#}_{14}$, $\si=-f_{\#}^{14}$&$\rho={f}^{2}\mp i{f}^{3}\pm {f}^{134}+i{f}^{124}$,\quad$\mathrm{d}\rho=(\mp f_1+\be f^4).\rho$\\
						\hline
						\multirow{2}{*}{$A_{4,6}^{\al,0}$}&$J=E_{14}-E_{41}$,  $ R=-f^{\#}_{23}$, $\si=-f_{\#}^{23} $& $\rho=f^1+i{f}^{4}+{f}^{234}-i{f}^{123}$,\quad $\mathrm{d}\rho=\al f^4.\rho$ \\
						\cline{2-3}
						&$J=\mp E_{23}\pm E_{32}$, $R=-f^{\#}_{14}$, $\si=-f_{\#}^{14}$&$\rho={f}^{2}\mp i{f}^{3}\pm {f}^{134}+i{f}^{124}$,\quad$\mathrm{d}\rho=\mp f_1.\rho$\\
						\hline
						\hline
						$A_{4,9}^{-\frac{1}{2}}$&$J=\mp E_{24}\pm E_{42}$, $ R=-f^{\#}_{13}$, $\si=-f_{\#}^{13}$&$\rho={f}^{2}\mp i{f}^{4}\mp {f}^{134}+i{f}^{123}$,\quad$\mathrm{d}\rho=f^4.\rho$\\
						
						\hline
						
						\multirow{1}{*}{ $A_{4,9}^{1}$}&$J=E_{23}-E_{32}$, $ R=-f^{\#}_{14}$, $\si=-f_{\#}^{14}$&$\rho=-{f}^{2}-i{f}^{3}+{f}^{134}-i{f}^{124}$,\quad$\mathrm{d}\rho=f^4.\rho$\\
						\hline
						\hline
						$A_{4,11}^{\al}$&$J=\mp E_{23}\pm E_{32}$, $ R=-kf^{\#}_{14}$, $\si=-\frac{1}{k}f_{\#}^{14}$&$\rho=\pm k{f}^{2}-ik{f}^{3}+{f}^{134}\pm i{f}^{124}$,\;$\mathrm{d}\rho=(\mp kf_1+\al f^4).\rho$\\
						\hline
						\hline
						$A_{4,12}$&$J=- E_{12}+ E_{21}$, $ R=-kf^{\#}_{34}$, $\si=-\frac{1}{k}f_{\#}^{34}$&$\rho=-kf^1+i\frac{1}{k}{f}^{2}+{f}^{234}+i{f}^{134}$,\;$\mathrm{d}\rho=(-kf_3+f^3).\rho$\\
						\hline
					\end{tabular}\captionof{table}{Generalized complex structures of type 1 on non-unimodular Lie algebras \label{8} $(\la\in\R,\,k\in\mathbb{R}^*,\,\e\in\{0,1\}$.)}\end{center}}
			

				{\renewcommand*{\arraystretch}{1.4}
					\begin{center}	\begin{tabular}{|c|ll|}
							\hline
							Lie algebra & \multicolumn{2}{c|}{Generalized complex structures of type 1 and their pure spinors} \\
							\hline
							\multirow{3}{*}{$A_{3,1}\oplus A_1$}&  $J=E_{34}-E_{43}$, $R=-f_{12}^\#$, $\si=-f^{12}_\#$& $\rho=f^3+if^4+f^{124}-if^{123}$,\quad$\mathrm{d}\rho=0$\\
							\cline{2-3}
							&$J=\la(E_{11}+E_{44})+E_{23}-E_{32}$ & $\rho=f^2+if^3-(1+i\la)f^{134}+(i-\la)f^{124}$\\
							&$R=-f_{14}^\#$, $\si=-(1+\la^2)f^{14}_\#$& $\mathrm{d}\rho=0$\\
							\hline	\hline
							$A_{3,4}\oplus A_1$& $J=E_{43}- E_{34}$, $R=-f_{12}^\#$, $\si=-f^{12}_\#$&$\rho=-f^3+if^4+f^{124}+if^{123}$,\quad$\mathrm{d}\rho=0$\\
							\hline\hline
							\multirow{2}{*}{$A_{3,6}\oplus A_1,$}&$J=E_{21}-E_{12}$, $R=-f_{34}^\#$, $\si=-f^{34}_\#$& $\rho=-f^1+if^2+f^{234}+if^{134}$,\quad$\mathrm{d}\rho=f_4.\rho$,\\
							\cline{2-3}
							&$J=E_{43}-E_{34}$, $R=-f_{12}^\#$, $\si=-f^{12}_\#$& $\rho=-f^3+if^4+f^{124}+if^{123}$,\quad$\mathrm{d}\rho=0$\\
							\hline
							\hline
							$A_{3,8}\oplus A_1$& $J=E_{21}- E_{12}+\la(E_{33}+E_{44})$ &$\rho=f^1-if^2-(1+i\la)f^{234}+(\la-i)f^{134}$\\
							$A_{3,9}\oplus A_1$&$R=-f_{34}^\#$, $\si=-(1+\la^2)f^{34}_\#$& $\mathrm{d}\rho=(f_4+\la f^3).\rho$\\
							\hline\hline
							\multirow{2}{*}{$A_{4,1}$}& $J=E_{43}- E_{34}+\la(E_{11}+E_{22})$, &$\rho=-f^3+if^4+(1+i\la)f^{124}+(i-\la)f^{123}$\\
							&$R=-f_{12}^\#$, $\si=-(1+\la^2)f^{12}_\#$&  $\mathrm{d}\rho=0$\\
							\hline\hline
							$A_{4,5}^{-\frac12,-\frac12}$&	$J=E_{23}-E_{32}$, $R=-f_{14}^\#$,  $\si=-f^{14}_\#$ &$\rho=f^2+if^3-f^{134}+if^{124}$,\quad$\mathrm{d}\rho=-\frac{1}2f^4.\rho $ \\
							\hline\hline
							$A_{4,6}^{-2\be,\be}$	&$J=\e(E_{23}-E_{32})$, $R=-\e f_{14}^\#$, $\si=-\e f^{14}_\#$&$\rho=f^2+\e if^3-f^{134}+\e if^{124}$,\quad$\mathrm{d}\rho=(f_1+\be f^4).\rho $, \\
							\hline\hline
							\multirow{2}{*}{$A_{4,10}$}& $J=E_{23}- E_{32}+\la(E_{11}+E_{44})$ & $\rho=f^2+if^3-\e(1+i\la)f^{134}+\e(i-\la)f^{124}$\\
							&$R=-\e f_{14}^\#$, $\si=-\e (1+\la^2)f^{14}_\#$ &$\mathrm{d}\rho=(\e f_1-\la f^4).\rho$ \\
							\hline
						\end{tabular}\captionof{table}{Generalized complex structures of type 1 on unimodular Lie algebras $(\la\in\R,\e\in\{1,-1\})$. \label{9}}	
					\end{center}

		{\renewcommand*{\arraystretch}{1.4}
			\begin{center}
				\begin{tabular}{|l|l|c|}
					\hline
					Source & Isomorphism&Target \\
					\hline
					$\U_1,\;\;y=0$&$f_1=e_1,f_2=e_3-\frac23q_1e_1,f_3=e_4-\frac23q_2e_1,f_4=e_2$&$A_{4,5}^{-\frac12,-\frac12}$\\
					\hline
					\multirow{2}{*}{$\U_1,\;\;y\not=0$}&$f_1=e_1,f_2=(4y^2+9)\left( e_4+\frac{4yq_1-6q_2}{4y^2+9}e_1 \right),$&$A_{4,6}^{-\frac1{|y|},\frac1{2|y|}}$\\
					&$f_3=s(4y^2+9)\left( e_3-\frac{4yq_2+6q_1}{4y^2+9}e_1 \right),f_4=\frac{s}ye_2$,$s=-\frac{|y|}y.$&\\
					\hline
					$\U_2,\;\;y\not=0,(b_1,b_2)=(0,0)$&
					$f_1=q_2e_1-ye_3,f_2=q_1e_1+ye_4,f_3=\frac1y e_2,f_4=e_1$&$A_{3,6}\oplus A_1$\\
					\hline
					\multirow{2}{*}{	$\U_2,\;\; b_2y>0$}&$f_1=\frac{1}{\sqrt{yb_2}}(\frac{q_2}{y}e_1-e_3), f_2=\frac{1}{\sqrt{yb_2}}(\frac{q_1}{y}e_1+e_4)$&$A_{3,8}\oplus A_1$\\
					&$f_3=\frac1y(\frac{b_1}{b_2}e_1+e_2),f_4=e_1$&\\
					\hline
					\multirow{2}{*}{$\U_2,\;\; b_2y<0$}&$f_1=\frac{1}{\sqrt{-yb_2}}(\frac{q_2}{y}e_1-e_3), f_2=\frac{1}{\sqrt{-yb_2}}(\frac{q_1}{y}e_1+e_4)$&$A_{3,9}\oplus A_1$\\
					&$f_3=\frac1y(\frac{b_1}{b_2}e_1+e_2),f_4=e_1$&\\
					\hline
					$\U_2,\;\;y\not=0, b_2=0,b_1\not=0$&$f_1=-\frac{1}{y^2b_1}e_1,f_2=\frac{1}{yb_1}(\frac{q_2}ye_1-e_3),
					f_3=\frac{1}{yb_1}(\frac{q_1}ye_1+e_4),f_4=\frac1ye_2$
					&$A_{4,10}$\\
					\hline
					\multirow{2}{*}{$\U_2,\;\;y=0,b_2\not=0,q_1\not=0$}&$f_1=-\frac{1}{q_1^2b_2}e_1,f_2=
					-\frac{b_1}{q_1b_2}e_1-\frac1{q_1}e_2,$&$A_{4,1}$
					\\
					&$f_3={q_2}e_3-q_1e_4,f_4=-\frac{q_1q_2b_2+1}{b_2q_1^2}e_3+e_4$&\\
					\hline
					\multirow{2}{*}{$\U_2,\;\;y=0,b_2\not=0,q_2\not=0$}&
					$f_1=\frac{1}{q_2^2b_2}e_1,f_2=
					-\frac{b_1}{q_2b_2}e_1-\frac1{q_2}e_2,$&$A_{4,1}$
					\\
					&$f_3={q_1}e_4-q_2e_3,f_4=\frac{-q_1q_2b_2+1}{b_2q_2^2}e_4+e_3$&\\
					\hline
					$\U_2,\;\;y=0,b_2\not=0,q_1=q_2=0$&$f_1=b_1e_1+b_2e_2,f_2=e_3,f_3=e_4,f_4=e_1$&$A_{3,1}\oplus A_1$\\
					\hline
					$\U_2,\;\;y=0,b_2=0,b_1=0,q_1\not=0$&$f_1=-q_1e_1,f_2=-q_1e_2,f_3=-\frac{1}{q_1}e_3,f_4=-\frac{q_2}{q_1}e_3+e_4$&$A_{3,1}\oplus A_1$\\
					\hline
					$\U_2,\;\;y=0,b_2=0,b_1=0,q_2\not=0$&$f_1=-q_2e_1,f_2=-q_2e_2,f_3=-\frac{1}{q_2}e_4,f_4=-\frac{q_1}{q_2}e_4+e_3$&$A_{3,1}\oplus A_1$\\
					\hline
					$\U_2,\;\;y=0,b_2=0,b_1\not=0$&$f_1=b_1e_1,f_2=e_3,f_3=e_4,f_4=e_2+\frac{q_2}{b_1}e_3-\frac{q_1}{b_1}e_4$&$A_{3,1}\oplus A_1$\\
					\hline
					$\U_3,\;\;p^2+qr=1,r=0$&$f_1=-\frac{q}{2p}e_1+e_2,f_2=e_1,f_3=\frac{1}pe_4,f_4=\frac{pb_1+qb_2}{p^2}e_1-\frac{b_2}pe_2+e_3$&$A_{3,4}\oplus A_1$\\
					\hline
					\multirow{2}{*}{$\U_3,\;\;p^2+qr=1,r\not=0$}&$f_1=
					\frac12\left(\frac{p-1}re_1+e_2\right),f_2=\frac12\left(\frac{p+1}re_1+e_2\right),f_3=e_4$&$A_{3,4}\oplus A_1$\\
					&$f_4=\left( b_1p-\frac{(p^2-1)b_2}r  \right)e_1-(pb_2-rb_1)e_2+e_3$&\\
					\hline
					\multirow{2}{*}{$\U_3,\;\;p^2+qr=-1$}&$f_1=-pe_1-re_2,f_2=e_1,f_3=e_4$&$A_{3,6}\oplus A_1$\\
					&$f_4=-\left( b_1p-\frac{(p^2+1)b_2}r  \right)e_1+(pb_2-rb_1)e_2+e_3$&\\
					\hline
					\multirow{1}{*}{$\U_3,\;\;p^2+qr=0,r\not=0,b_1=\mu p, b_2=\mu r$}&$f_1=-pe_1-re_2,f_2=e_1,f_3=e_4, f_4=\mu e_1+e_3
					$&$A_{3,1}\oplus A_1$\\
					\hline
					\multirow{2}{*}{$\U_3,\;\;p^2+qr=0,r\not=0,rb_1-p b_2\not=0$}&
					$f_1=-(rb_1-pb_2)(pe_1+re_2),f_2=-(rb_1-pb_2)e_1,$&$A_{4,1}$\\
					&$f_3={b_2}e_1+{r}
					e_3,\;f_4=\frac{b_1}pe_1-\frac{b_2}pe_2-\frac{p}re_3-e_4$&\\
					\hline
					$\U_3,\;\;p=r=0,q\not=0, b_2=0$&$f_1=-qe_1,f_2=e_2,f_3=e_4,f_4=e_3+\frac{b_1}qe_2$&$A_{3,1}\oplus A_1$\\
					\hline
					$\U_3,\;\;p=r=0,q\not=0, b_2\not=0$&$f_1=-b_2qe_1,f_2=b_1e_1+b_2e_2,f_3=e_3,f_4=e_4$&$A_{4,1}$\\
					\hline
					$\U_3,\;\;p=r=q=0, b_2\not=0$&$f_1=b_1e_1+b_2e_2,f_2=e_3,f_3=e_4,
					f_4=e_1$&$A_{3,1}\oplus A_1$\\
					\hline
					$\U_3,\;\;p=r=q=0, b_1\not=0$&$f_1=b_1e_1+b_2e_2,f_2=e_3,f_3=e_4,
					f_4=e_2$&$A_{3,1}\oplus A_1$\\
					\hline
				\end{tabular}\captionof{table}{Isomorphisms from    the Lie algebras obtained in Theorem \ref{main1} onto the unimodular Lie algebras in Table \ref{2}.\label{10}}\end{center}}
		{\renewcommand*{\arraystretch}{1.4}
			\begin{center}
				\begin{tabular}{|cl|l|c|}
					\hline
					&Source& Isomorphism&Target\\
					
					\hline
					$\B_{1}$& $q_1\in\mathbb{R}$ & $f_1=\frac{q_2}{2}e_1+e_2+e_4$, $f_2=e_1$, $f_3=\frac{q_1}{2}e_1+e_3$, $f_4=-e_2$& $A_{4,5}^{-1,1}$ \\
					\hline
					\hline
					$\B_{2}$&  $q_1,q_2\in\mathbb{R}$ &$f_1=-e_2$, $f_2=e_1$, $f_3=-q_2e_1-e_4$, $f_4=q_1e_1+e_3$& $2A_2$\\
					\hline
					\hline
					$\B_{3}$ &$y=0$, $x=0$&$f_1=-e_2$, $f_2=e_1$, $f_3=-q_1e_1+e_3$, $f_4=-q_2e_1+e_4$&$A_2\oplus2A_1$\\
					\hline
					$\B_{3}$& $y=0$, $x=1$& $f_1=-\frac{q_2}{2}e_1+e_4$, $f_2=e_1$, $f_3=-\frac{q_1}{2}e_1+e_3$, $f_4=-e_2$ &$A_{4,5}^{-1,1}$ \\
					\hline
					$\B_{3}$& $y=0$, $x\not\in\{-1,0\}$, $|x|<1$&$f_1=e_1$, $f_2=-\frac{q_2}{x+1}e_1+e_4$, $f_3=-\frac{q_1}{x+1}e_1+e_3$, $f_4=e_2$ &$A_{4,5}^{-x,-x}$ \\
					\hline
					$\B_{3}$ &$y=0$, $x\not\in\{-1,0\}$, $|x|>1$&$f_1=-\frac{q_2x}{x+1}e_1+e_4$, $f_2=e_1$, $f_3=-\frac{q_1x}{x+1}e_1+e_3$, $f_4=e_2$ &$A_{4,5}^{-\frac{1}{x},1}$ \\
					\hline
					$\B_{3}$ &$y=0$, $x=-1$, $q_1=q_2=0$ &$f_1=e_1$, $f_2=e_3$, $f_3=e_4$, $f_4=e_2$ &$A_{4,5}^{1,1}$ \\
					\hline
					$\B_{3}$  &$y=0$, $x=-1$, $q_1\not=0$ &$f_1=\frac{q_2}{q_1}e_3+e_4$, $f_2=q_1e_1$, $f_3=e_3$, $f_4=e_2$& $A_{4,2}^1$ \\
					\hline
					$\B_{3}$  &$y=0$, $x=-1$, $q_2\not=0$ & $f_1=e_3+\frac{q_1}{q_2}e_4$, $f_2=-q_2e_1$, $f_3=e_4$, $f_4=e_2$& $A_{4,2}^1$ \\
					\hline
					$\B_{3}$ &$y\not=0$ $(q_1,q_2)=(0,0)$, $xy\leq0$& $f_1=e_1$, $f_2=-e_3$, $f_3=e_4$, $f_4=\frac{1}{y}e_2$ &$A_{4,6}^{\frac{1}{y},-\frac{x}{y}}$ \\
					\hline
					$\B_{3}$ &$y\not=0$ $(q_1,q_2)=(0,0)$, $xy\geq0$& $f_1=e_1$, $f_2=e_3$, $f_3=e_4$, $f_4=-\frac{1}{y}e_2$ &$A_{4,6}^{-\frac{1}{y},\frac{x}{y}}$ \\
					
					\hline
					\multirow{2}{*}{$\B_{3}$} & \multirow{2}{*}{ $y\not=0$ $(q_1,q_2)\not=(0,0)$,  $xy\leq0$}&
					$f_1=e_1$, $f_2=e_1-\frac{xq_1+yq_2+q_1}{q_1^2+q_2^2}e_3-\frac {xq_2-yq_1+q_2}{q_1^2+q_2^2}e_4$& \multirow{2}{*}{$A_{4,6}^{\frac{1}{y},-\frac{x}{y}}$}\\
					&&$f_3=-\frac {xq_2-yq_1+q_2}{q_1^2+q_2^2}e_3+\frac{xq_1+yq_2+q_1}{q_1^2+q_2^2}e_4$, $f_4=\frac1y e_2$ &\\
					\hline
					\multirow{2}{*}{$\B_{3}$} & \multirow{2}{*}{ $y\not=0$ $(q_1,q_2)\not=(0,0)$,  $xy\geq0$}&
					$f_1=e_1$, $f_2=-e_1+\frac {xq_{{1}}+yq_{{2}}+q_{{1}}}{{q_{{1}}}^{2}+{q_{{2}}}^{2}}e_3+\frac {q_{{2}}x-q_{{1}}y+q_{{2}}}{{q_{{1}}}^{2}+{q_{{2}}}^{2}}e_4$& \multirow{2}{*}{$A_{4,6}^{-\frac{1}{y},\frac{x}{y}}$}\\
					&&$f_3=-\frac {q_{{2}}x-q_{{1}}y+q_{{2}}}{{q_{{1}}}^{2}+{q_{{2}}}^{2}}e_3+\frac {xq_{{1}}+yq_{{2}}+q_{{1}}}{{q_{{1}}}^{2}+{q_{{2}}}^{2}}e_4$, $f_4=-\frac{1}{y} e_2$ &\\
					\hline
					\hline
					\multirow{2}{*}{ $\B_{4}$}&\multirow{2}{*}{$y=0$}&$f_1=e_1$, $f_2=-2q_1e_1+e_3$ &\multirow{2}{*}{$A_{4,9}^{1}$}\\
					&&$f_3=(-2q_1-2q_2)e_1+e_3+e_4$, $f_4=2e_2$&\\
					\hline
					\multirow{2}{*}{$\B_{4}$} &\multirow{2}{*}{$y\not=0$}&$f_1=-se_1$, $f_2=\frac{4yq_2+2q_1}{4y^2 + 1}e_1-e_3$ &\multirow{2}{*}{$A_{4,11}^{\frac{1}{2|y\,|}}$}\\
					&&$f_3=s(\frac{4yq_1-2q_2}{4y^2+1}e_1+e_4)$, $f_4=\frac{s}{y}e_2$, $s=\frac{|y|}{y}$&\\
					\hline
				\end{tabular}\captionof{table}{Isomorphisms from    the Lie algebras obtained in Theorem \ref{main2} onto the nonunimodular Lie algebras in Table \ref{1}.\label{12}}\end{center}}

		{\renewcommand*{\arraystretch}{1.1}
			\begin{center}
				\begin{tabular}{|cl|l|c|}
					\hline
					&Source& Isomorphism&Target\\
					\hline
					$\A_1$& $y_1=0$, $y_2=0$& $f_1=e_3$, $f_2=e_4$, $f_3=-e_1$, $f_4=e_2$ & $A_{3,3}\oplus A_1$\\
					\hline
					$\A_1$& $y_1\not=0$, $y_2=0$&$f_1=e_3$, $f_2=se_4$, $f_3=-\frac{s}{y_1}e_1$, $f_4=e_2$, $s=\frac{|y_1|}{y_1}$&$A_{3,7}^{\frac{1}{|y_1|}}\oplus A_1$\\
					\hline
					$\A_1$& $y_2\not=0$&$f_1=-e_3$, $f_2=e_4$, $f_3=-e_1+\frac{y_1}{y_2}e_2$, $f_4=\frac{1}{y_2}e_2$&  $A_{4,12}$\\
					\hline
					\hline
					\multirow{2}{*}{$\A_2$}&\multirow{2}{*}{$c\not=0$}&$f_1=s(-\frac{bc+a}{x(c^2+1)}e_2+e_3)$, $f_2=\frac{ac-b}{x(c^2+1)}e_2+e_4$ & \multirow{2}{*}{$A_{3,7}^{\frac{1}{|c|}}\oplus A_1$} \\
					&& $f_3=-\frac{s}{cx}e_1$, $f_4=e_2$, $s=\frac{|c|}{c}$ &\\
					\hline
					\multirow{2}{*}{$\A_2$}& \multirow{2}{*}{$c=0$}&$f_1=-\frac{b}{x}e_2+e_4$, $f_2=-\frac{a}{x}e_2+e_3$ & \multirow{2}{*}{$A_{3,3}\oplus A_1$}\\
					&&$f_3=-\frac{1}{x}e_1$, $f_4=e_2$&\\
					\hline
					\hline
					$\A_3$&$r=0$, $p=0$, $q=0$&$f_1=-\frac{1}{b_3}e_4$,  $f_2=\frac{b_1}{b_3}e_1+\frac{b_2}{b_3}e_2+e_3$, $f_3=e_1$, $f_4=e_2$& $A_2\oplus 2A_1$\\
					\hline
					\multirow{2}{*}{$\A_3$}&\multirow{2}{*}{$r=0$, $p=0$, $q\not=0$}&$f_1=\frac{qb_2+b_1b_3}{b_3}e_1+b_2e_2+b_3e_3,$  $f_2=\frac{q}{b_3}e_1$ & \multirow{2}{*}{$A_{4,3}$}\\
					&&$f_3=e_2$, $f_4=\frac{1}{b_3}e_4$&\\
					\hline
					\multirow{2}{*}{ $\A_3$}&$r=0$, $p\not=0$&$f_1=-\frac{pb_1+qb_2+b_1b_3}{(b_3+p)(p-b_3)}e_1+\frac{b_2}{b_3+p}e_2+e_3$ &\multirow{2}{*}{$A_{4,5}^{-\frac{p}{b_3},\frac{p}{b_3}}$}\\
					&  $0<\frac{p}{b_3}<1$&$f_2=-\frac{q}{2p}e_1+e_2$, $f_3=e_1$, $f_4=\frac{1}{b_3}e_4$&\\
					\hline
					\multirow{2}{*}{ $\A_3$}&$r=0$, $p\not=0$ &$f_1=-\frac{pb_1+qb_2+b_1b_3}{(b_3+p)(p-b_3)}e_1+\frac{b_2}{b_3+p}e_2+e_3$ &\multirow{2}{*}{$A_{4,5}^{\frac{p}{b_3},-\frac{p}{b_3}}$}\\
					&$-1<\frac{p}{b_3}<0$ & $f_2=e_1$, $f_3=-\frac{q}{2p}e_1+e_2$, $f_4=\frac{1}{b_3}e_4$&\\
					
					\hline
					\multirow{2}{*}{ $\A_3$}&$r=0$, $p\not=0$  & $f_1=e_1$, $f_2=-\frac {q}{2p}e_1+e_2$       &\multirow{2}{*}{$A_{4,5}^{-1,\frac{b_3}{p}}$}\\
					&$|\frac{p}{b_3}|>1$&$f_3=-\frac {pb_1+qb_2+b_1b_3}{p^2-b_3^2}e_1+\frac {b_2}{p+b_3}e_2+e_3$, $f_4=\frac{1}{p}e_4$&\\
					\hline
					\multirow{2}{*}{$\A_3$}&$r=0$,  $p=\pm b_3$& $f_1=-\frac{q}{2p}e_1+e_2$,  $f_2=\frac{2pb_1+qb_2}{2p^2}e_1$  &  \multirow{2}{*}{$A_{4,2}^{-1}$}\\
					&$2b_1p+qb_2\not=0$& $f_3=\frac{b_2}{2p}e_2+e_3$, $f_4=\frac{1}{p}e_4$&\\
					\hline
					\multirow{2}{*}{$\A_3$}&$r=0$, $p=\pm b_3$ &$f_1=\frac{b_2}{2p}e_2+e_3$, $f_2=-\frac{q}{2p}e_1+e_2$ &  \multirow{2}{*}{$A_{4,5}^{-1,1}$}\\
					&$2b_1p+qb_2=0$&$f_3=e_1$, $f_4=\frac{1}{p}e_4$&\\
					\hline
					\multirow{2}{*}{$\A_3$}&\multirow{2}{*}{$r\not=0$, $p^2+qr=-1$}&$f_1=-\frac{p^2b_2-prb_1-rb_1b_3+b_2}{r(b_3^2+1)}e_1-\frac{pb_2-rb_1-b_2b_3}{b_3^2+1}e_2+e_3$& \multirow{2}{*}{$A_{4,6}^{-b_3,0}$}\\
					&&$f_2=e_1$, $f_3=pe_1+re_2$, $f_4=-e_4$&\\
					\hline
					\multirow{2}{*}{$\A_3$}&$r\not=0$,  $|b_3|<1$&$f_1=\frac{p+1}{r}e_1+e_2$, $f_2=\frac{p-1}{r}e_1+e_2$, $f_4=e_4$& \multirow{2}{*}{$A_{4,5}^{-1,b_3}$}\\
					&$p^2+qr=1$&$f_3=-\frac{p^2b_2-prb_1-rb_1b_3-b_2}{r(b_3^2-1)}e_1-\frac{pb_2-rb_1-b_2b_3}{b_3^2-1}e_2+e_3$&\\
					\hline
					\multirow{2}{*}{$\A_3$}&$r\not=0$, $b_3>1$&$f_1=-\frac{p^2b_2-prb_1-rb_1b_3-b_2}{r(b_3^2-1)}e_1-\frac{pb_2-rb_1-b_2b_3}{b_3^2-1}e_2+e_3$&\multirow{2}{*}{$A_{4,5}^{-\frac{1}{b_3},\frac{1}{b_3}}$}\\
					&$p^2+qr=1$& $f_2=\frac{p-1}{r}e_1+e_2$, $f_3=\frac{p+1}{r}e_1+e_2$, $f_4=\frac{1}{b_3}e_4$&\\
					\hline
					\multirow{2}{*}{$\A_3$}&$r\not=0$, $b_3<-1$&$f_1=-\frac{p^2b_2-prb_1-rb_1b_3-b_2}{r(b_3^2-1)}e_1-\frac{pb_2-rb_1-b_2b_3}{b_3^2-1}e_2+e_3$&\multirow{2}{*}{$A_{4,5}^{\frac{1}{b_3},-\frac{1}{b_3}}$}\\
					&$p^2+qr=1$& $f_2=\frac{p+1}{r}e_1+e_2$, $f_3=\frac{p-1}{r}e_1+e_2$, $f_4=\frac{1}{b_3}e_4$&\\
					\hline
					\multirow{2}{*}{$\A_3$}&$r\not=0$, $p^2+qr=1$, $b_3=1$&$f_1=\frac{p+1}{r}e_1+e_2$, $f_2=\frac{p-1}{r}e_1+e_2$&\multirow{2}{*}{$A_{4,5}^{-1,1}$}\\
					&$rb_1=pb_2-b_2$&$f_3=-\frac{b_2}{r}e_1+e_3$, $f_4=e_4$&\\
					\hline
					\multirow{2}{*}{$\A_3$}&$r\not=0$, $p^2+qr=1$, $b_3=1$ & $f_1=\frac{p-1}{r}e_1+e_2$, $f_2=\frac{p+1}{r}e_1+e_2$, $f_4=e_4$&\multirow{2}{*}{$A_{4,2}^{-1}$}\\
					&$rb_1\not=pb_2-b_2$ &$f_3=\frac{pb_2-rb_1+b_2}{r(pb_2-rb_1-b_2)}e_1-\frac{2}{pb_2-rb_1-b_2}e_3$&\\
					\hline
					\multirow{2}{*}{$\A_3$}&$r\not=0$, $p^2+qr=1$, $b_3=-1$&$f_1=\frac{p-1}{r}e_1+e_2$, $f_2=\frac{p+1}{r}e_1+e_2$ &\multirow{2}{*}{$A_{4,5}^{-1,1}$}\\
					&$rb_1=pb_2-b_2$&$f_3=\frac{b_2}{r}e_1-e_3$, $f_4=-e_4$&\\
					\hline
					\multirow{2}{*}{$\A_3$}&$r\not=0$,$p^2+qr=1$, $b_3=-1$ &$f_1=\frac{p+1}{r}e_1+e_2$, $f_2=\frac{p-1}{r}e_1+e_2$, $f_4=-e_4$
					&\multirow{2}{*}{$A_{4,2}^{-1}$}\\
					&$rb_1\not=pb_2-b_2$ &$f_3=-\frac{pb_2-rb_1-b_2}{r(pb_2-rb_1+b_2)}e_1-\frac{2}{pb_2-rb_1+b_2}e_3$
					&\\
					\hline
					
					\hline
					\multirow{2}{*}{$\A_4$}&\multirow{2}{*}{$q_1\not=0$}&$f_1=2q_1e_1$, $f_2=-(\frac{1}{3}b_2q_2+b_1)e_1-\frac{1}{3}b_2e_2+e_3$&\multirow{2}{*}{ $A_{4,9}^{-\frac12}$}\\
					&& $f_3=-q_2e_1+2e_2$, $f_4=-\frac{1}{2}e_4$&\\
					\hline
					\hline
					\multirow{2}{*}{$\A_5$}&&$f_1=pe_2+e_3+e_4$, $f_2=-pe_1+e_3-e_4$&\multirow{2}{*}{$A_{4,12}$}\\
					&&$f_3=-\frac12e_1-\frac12e_2$, $f_4=\frac12e_1-\frac12e_2$&\\
					\hline	
				\end{tabular}\captionof{table}{Isomorphisms from    the Lie algebras obtained in Theorem \ref{main3} onto the nonunimodular Lie algebras in Table \ref{1}.\label{11}}	
			\end{center}}

		{\renewcommand*{\arraystretch}{1.3}\begin{center}
				\begin{tabular}{|c|l|}
					\hline
					The Lie algebra  &\multicolumn{1}{c|}{ 2-cocycle}\\
					\hline
					$A_2\oplus2A_1$ &$a_{{1,2}}f^\#_{1,2}+a_{1,3}f^\#_{1,3}+a_{1,4}f^\#_{1,4}+a_{3,4}f^\#_{3,4}$\\
					\hline
					$2A_2$ &$a_{{1,2}}f^\#_{{1,2}}+a_{{1,3}}f^\#_{{1,3}}+a_{{3,4}}f^\#_{{3,4}}$\\
					\hline
					$A_{3,1}\oplus A_1$	&$a_{{1,2}}f^\#_{{1,2}}+a_{{1,3}}f^\#_{{1,3}}+a_{{2,3}}f^\#_{{2,3}}+a_{{2,4}}f^\#_{{2,4}}+a_{{3,4}}f^\#_{{3,4}}$\\
					\hline
					$A_{3,2}\oplus A_1$&$a_{{1,3}}f^\#_{{1,3}}+a_{{2,3}}f^\#_{{2,3}}+a_{{3,4}}f^\#_{{3,4}}$\\
					\hline
					$A_{3,3}\oplus A_1$&$a_{{1,3}}f^\#_{{1,3}}+a_{{2,3}}f^\#_{{2,3}}+a_{{3,4}}f^\#_{{3,4}}$\\
					\hline
					$A_{3,4}\oplus A_1$&$a_{{1,2}}f^\#_{{1,2}}+a_{{1,3}}f^\#_{{1,3}}+a_{{2,3}}f^\#_{{2,3}}+a_{{3,4}}f^\#_{{3,4}}$\\
					\hline
					$A_{3,5}^\al\oplus A_1$&$a_{{1,3}}f^\#_{{1,3}}+a_{{2,3}}f^\#_{{2,3}}+a_{{3,4}}f^\#_{{3,4}}$\\
					\hline
					$A_{3,6}\oplus A_1$&$a_{{1,2}}f^\#_{{1,2}}+a_{{1,3}}f^\#_{{1,3}}+a_{{2,3}}f^\#_{{2,3}}+a_{{3,4}}f^\#_{{3,4}}$\\
					\hline
					$A_{3,7}^\al\oplus A_1$&$a_{{1,3}}f^\#_{{1,3}}+a_{{2,3}}f^\#_{{2,3}}+a_{{3,4}}f^\#_{{3,4}}$\\
					\hline
					$A_{3,8}\oplus A_1$&$a_{{1,2}}f^\#_{{1,2}}+a_{{1,3}}f^\#_{{1,3}}+a_{{2,3}}f^\#_{{2,3}}$\\
					\hline
					$A_{3,9}\oplus A_1$&$a_{{1,2}}f^\#_{{1,2}}+a_{{1,3}}f^\#_{{1,3}}+a_{{2,3}}f^\#_{{2,3}}$\\
					\hline
					$A_{4,1}$&$a_{{1,4}}f^\#_{{1,4}}+a_{{2,3}}f^\#_{{2,3}}+a_{{2,4}}f^\#_{{2,4}}+a_{{3,4}}f^\#_{{3,4}}$
					\\
					\hline
					$A^\al_{4,2}$&$a_{{1,4}}f^\#_{{1,4}}+a_{{2,4}}f^\#_{{2,4}}+a_{{3,4}}f^\#_{{3,4}}$\\
					\hline
					$A_{4,3}$&$a_{{1,4}}f^\#_{{1,4}}+a_{{2,3}}f^\#_{{2,3}}+a_{{2,4}}f^\#_{{2,4}}+a_{{3,4}}f^\#_{{3,4}}$\\
					\hline
					$A_{4,4}$&$a_{{1,4}}f^\#_{{1,4}}+a_{{2,4}}f^\#_{{2,4}}+a_{{3,4}}f^\#_{{3,4}}$\\
					\hline
					$A_{4,5}^{\al,\be}$, $\al\not=-1$, $\al\be\not=0$ &\multirow{2}{*}{$a_{{1,4}}f^\#_{{1,4}}+a_{{2,4}}f^\#_{{2,4}}+a_{{3,4}}f^\#_{{3,4}}$}\\
					$\be\not=-1$,  $\al\not=-\be$&\\
					\hline
					$A_{4,5}^{0,-1}$ &$a_{{1,3}}f^\#_{{1,3}}+a_{{1,4}}f^\#_{{1,4}}+a_{{2,4}}f^\#_{{2,4}}+a_{{3,4}}f^\#_{{3,4}}$\\
					\hline
					$A_{4,5}^{-\frac{1}{2},-\frac{1}{2}}$ &$a_{{1,4}}f^\#_{{1,4}}+a_{{2,3}}f^\#_{{2,3}}+a_{{2,4}}f^\#_{{2,4}}+a_{{3,4}}f^\#_{{3,4}}$\\
					\hline
					$A_{4,5}^{-1,\be}$, $\be\not\in\{-1,1\}$&$a_{{1,2}}f^\#_{{1,2}}+a_{{1,4}}f^\#_{{1,4}}+a_{{2,4}}f^\#_{{2,4}}+a_{{3,4}}f^\#_{{3,4}}$\\
					\hline
					$A_{4,5}^{-1,1}$&$a_{{1,2}}f^\#_{{1,2}}+a_{{1,4}}f^\#_{{1,4}}+a_{{2,3}}f^\#_{{2,3}}+a_{{2,4}}f^\#_{{
							2,4}}+a_{{3,4}}f^\#_{{3,4}}
					$\\
					\hline
					$A_{4,5}^{-\al,\al}$, $\al\not=1$ &$a_{{1,4}}f^\#_{{1,4}}+a_{{2,3}}f^\#_{{2,3}}+a_{{2,4}}f^\#_{{2,4}}+a_{{3,4}}f^\#_{{3,4}}$\\
					\hline
					$A^{\al,\be}_{4,6}$, $\be\not=0$&$a_{{1,4}}f^\#_{{1,4}}+a_{{2,4}}f^\#_{{2,4}}+a_{{3,4}}f^\#_{{3,4}}$\\
					\hline
					$A^{\al,0}_{4,6}$&$a_{{1,4}}f^\#_{{1,4}}+a_{{2,3}}f^\#_{{2,3}}+a_{{2,4}}f^\#_{{2,4}}+a_{{3,4}}f^\#_{{3,4}}$\\
					\hline
					$A_{4,7}$&$2\,a_{{2,3}}f^\#_{{1,4}}+a_{{2,3}}f^\#_{{2,3}}+a_{{2,4}}f^\#_{{2,4}}+a_{{3,4}}f^\#_{{3,4}}$\\
					\hline
					$A_{4,8}$&$a_{{2,3}}f^\#_{{2,3}}+a_{{2,4}}f^\#_{{2,4}}+a_{{3,4}}f^\#_{{3,4}}$\\
					\hline
					$A_{4,9}^\be$&$(\be a_{{2,3}}+a_{{2,3}}) f^\#_{{1,4}}+a_{{2,3}}f^\#_{{2,3}}+a_{{2,4}}f^\#_{{2,4}}+a_{{3,4}}f^\#_{{3,4}}$\\
					\hline
					$A_{4,10}$ &$a_{{2,3}}f^\#_{{2,3}}+a_{{2,4}}f^\#_{{2,4}}+a_{{3,4}}f^\#_{{3,4}}$\\
					\hline
					$A_{4,11}^\al$ &$2\al a_{{2,3}}f^\#_{{1,4}}+a_{{2,3}}f^\#_{{2,3}}+a_{{2,4}}f^\#_{{2,4}}+a_{{3,4}}f^\#_{{3,4}}$
					\\
					\hline
					$A_{4,12}$&$-a_{{2,3}}f^\#_{{1,4}}+a_{{2,3}}f^\#_{{2,3}}+a_{{2,4}}f^\#_{{1,3}}+a_{{2,4}}f^\#_{{2,4}}+a_{{3,4}}f^\#_{{3,4}}$
					\\
					\hline
					
				\end{tabular}\captionof{table}{2-cocycles on four dimensional Lie algebras.\label{13}}
			\end{center}}
		
		\section{Appendix}\label{section7}
		
	This appendix completes the proof of Theorem \ref{type1}. For each Lie algebra $A$ in Tables \ref{8} and \ref{9}, we give all the  isomorphisms from the $\A_i$, $\B_i$ and $\U_i$ to $A$, the corresponding generalized complex structures on $A$ and the automorphisms and the 2-cocycles of $A$ which permits to reduce these generalized complex structures.
	\subsection{The Unimodular Case}

	$\bullet$	$A_{3,4}\oplus A_1$
	
	\begin{enumerate}
		
		\item The isomorphism from $\U_3$: $f_1=-\frac{q}{2p}e_1+e_2,f_2=e_1,f_3=\frac{1}pe_4,f_4=\frac{pb_1+qb_2}{p^2}e_1-\frac{b_2}pe_2+e_3.$
		
		The generalized complex structure:
		
		$\begin{cases} J_1=\la E_{11}+\frac{b_2}{p^2}E_{13}-\frac{\la b_2}{p}E_{14}+\la E_{22}-\frac{2pb_1+qb_2}{2p^3}E_{23}+\frac{\la(2pb_1+qb_2)}{2p^2}E_{24}-pE_{34}+\frac{1}{p}E_{43}\\
		R_1=f_{12}^\#\\
		\si_1=(1+\la^2)( f^{12}_\#+\frac{2pb_1+qb_2}{2p^2}f^{14}_\#+ \frac{b_2}pf^{24}_\#).\end{cases}$
		
		We have
		\[	\phi(T)\exp(B)(J_1,R_1,\si_1)\exp(-B)\phi(T^{-1})=(E_{43}-E_{34},-
		f_{12}^\#,-f^{12}_\#),\]
		
		where
		
		$T= -E_{11}+\frac{\la b_2}{p^2}E_{13}+E_{22}+\frac{\la (2pb_1+qb_2)}{p^3}E_{23}+E_{33}+pE_{44}$,
		
		$B=-\la f^{12}_\#+\frac{2pb_1+qb_2}{2p^3}f^{23}_\#+\frac{b_2}{p^2}f^{23}_\#.$
		
		\item The isomorphism from $\U_3$: $f_1=\frac12 (\frac{p-1}re_1+e_2), f_2=\frac12 (\frac{p+1}re_1+e_2), f_3=e_4,
		f_4=( b_1p-\frac{(p^2-1)b_2}r)e_1-(pb_2-rb_1)e_2+e_3.$
		
		The generalized complex structure
		
		$\begin{cases}
		J_2=\la E_{11}+aE_{12}-\la aE_{14}+\la E_{22}+bE_{23}-\la bE_{24}-E_{34}+E_{43}\\
		R_2=f_{12}^\#\\
		\si_2=\frac{\la^2+1}{2r}(f^{12}_\#-bf^{14}_\#+af^{24}_\#),\\
		\end{cases}$
		
		where $a=(p+1)b_2-rb_1$ and $b=(p-1)b_2-rb_1.$
		
		We have
		\[ \phi(T)\exp(B)(J_2,R_2,\si_2)\exp(-B)\phi(T^{-1})=(E_{43}-E_{34},-f_{12}^\#,-f^{12}_\#),\]
		where
		
		$T=-\frac{1}{2}E_{11}+\frac{\la a}{2}E_{13} +\frac{1}{r}E_{22}-\frac{\la b}{r}E_{23}+E_{33}+E_{44}$,
		
		$B=-\frac1{2r}(f^{12}_\# +b f^{13}_\# -a f^{23}_\#).$
	\end{enumerate}

	$\bullet$	$A_{3,6}\oplus A_1$
	
	\begin{enumerate}\item The isomorphism from $\U_2$: $f_1=q_2e_1-ye_3,f_2=q_1e_1+ye_4,f_3=\frac1y e_2,f_4=e_1$.
		
		The generalized complex structure
		
		$\begin{cases}
		J_1=-E_{12}+E_{21}+\la E_{33}+(\la q_2-q_1)E_{41}+(\la q_1+q_2)E_{42}+\la E_{44}\\
		R_1=yf_{34}^\#\\
		\si_1=-\frac1y(1+\la^2)(q_2f^{13}_\#+q_1f^{23}_\#).
		\end{cases}$
		
		We have
		\[ \phi(T)\exp(B)(J_1,R_1,\si_1) \exp(-B)\phi(T^{-1})=(E_{21}-E_{12},-f_{34}^\#,-f_{\#}^{34}),\]		
		where
		
		$T=E_{11}+E_{22}+E_{33}-\frac1yE_{44}$,
		
		$B=\frac1y((\la q_2-q_1)f^{13}_\#+(\la q_1+q_2)f^{23}_\#-\lambda f_{\#}^{34})$.

		\item The isomorphism from $\U_3$: $f_1=-pe_1-re_2,f_2=e_1,f_3=e_4,
		f_4=-( b_1p-\frac{(p^2+1)b_2}r)e_1+(pb_2-rb_1)e_2+e_3$.
		
		The generalized complex structure
		
		$\begin{cases}
		J_2=\la E_{11}+\frac {pb_2-rb_1}{r}E_{13}-\frac{\la (pb_2-rb_1) }{r}E_{14}+\la E_{22}-\frac{b_2}{r}E_{23}-E_{34}+E_{43}\\
		R_2= -\frac1rf_{12}^\#\\
		\si_2=-(\la^2+1)( rf^{12}_\#+b_2f^{14}_\#+(pb_2-rb_1)f^{24}_\#).
		\end{cases}$
		
		We have
		\[ \exp(B)\phi(T^{-1})(J_2,R_2,\si_2)\phi(T)\exp(B)=(E_{43}-E_{34},-cf_{12}^\#,-\frac1cf_\#^{12}),\]
		
		where $c={\frac {r}{ \left( {\lambda}^{2}+1 \right) ^{2} \left( {p}^{2}{b_{{2}}
				}^{2}-2\,prb_{{1}}b_{{2}}+{r}^{2}{b_{{1}}}^{2}+{b_{{2}}}^{2} \right) }}$,

		$T=-\frac{b_2(\la^2+1)}{r}E_{11}+\frac{(\la^2+1)(pb_2-rb_1)}{r}E_{12}+\frac{\la(pb_2-rb_1)}{r}E_{13}-
		\frac{(\la^2+1)(pb_2-rb_1)}{r}E_{21}-\frac{b_2(\la^2+1)}{r}E_{22}-\frac{\la b_2}{r}E_{23}+E_{33}+E_{44}$ and
		$B=af_\#^{12}+bf_\#^{13}$, with
		\[ \begin{cases}a={\frac {\lambda\, \left( {\lambda}^{4}{p}^{2}{b_{{2}}}^{2}-2\,{\lambda
				}^{4}prb_{{1}}b_{{2}}+{\lambda}^{4}{r}^{2}{b_{{1}}}^{2}+{\lambda}^{4}{
				b_{{2}}}^{2}+2\,{\lambda}^{2}{p}^{2}{b_{{2}}}^{2}-4\,{\lambda}^{2}prb_
			{{1}}b_{{2}}+2\,{\lambda}^{2}{r}^{2}{b_{{1}}}^{2}+2\,{\lambda}^{2}{b_{
					{2}}}^{2}+{p}^{2}{b_{{2}}}^{2}-2\,prb_{{1}}b_{{2}}+{r}^{2}{b_{{1}}}^{2
			}+{b_{{2}}}^{2} \right) }{r}},\\
	b={\frac {{\lambda}^{4}{p}^{2}{b_{{2}}}^{2}-2\,{\lambda}^{4}prb_{{1}}b_{
				{2}}+{\lambda}^{4}{r}^{2}{b_{{1}}}^{2}+{\lambda}^{4}{b_{{2}}}^{2}+2\,{
				\lambda}^{2}{p}^{2}{b_{{2}}}^{2}-4\,{\lambda}^{2}prb_{{1}}b_{{2}}+2\,{
				\lambda}^{2}{r}^{2}{b_{{1}}}^{2}+2\,{\lambda}^{2}{b_{{2}}}^{2}+{p}^{2}
			{b_{{2}}}^{2}-2\,prb_{{1}}b_{{2}}+{r}^{2}{b_{{1}}}^{2}+{b_{{2}}}^{2}}{
			r}}.
	\end{cases} \]
	On the other hand, the automorphisms $T_2=\frac {1}{\sqrt {k}}(E_{12}-E_{21})+E_{33}+E_{44}$ and $T_3=\frac {1}{\sqrt {-k}}(E_{12}+E_{21})-E_{33}-E_{44}$
	satisfy
	\[ \phi(T_1)(E_{43}-E_{34},-kf_{12}^\#,-\frac1k f_{\#}^{12})\phi(T_1^{-1})=
	\phi(T_2)(E_{43}-E_{34},-kf_{12}^\#,-\frac1k f_{\#}^{12})\phi(T_2^{-1})=
	(E_{43}-E_{34},-f_{12}^\#,- f_{\#}^{12}). \]
\end{enumerate}

$\bullet$	($A_{3,8}\oplus A_1$, $\e=1$) and ($A_{3,9}\oplus A_1$, $\e=-1$)

\begin{enumerate}\item The isomorphism from $\U_2$: $f_1=\frac{1}{\sqrt{\e yb_2}}(\frac{q_2}{y}e_1-e_3), f_2=\frac{1}{\sqrt{\e yb_2}}(\frac{q_1}{y}e_1+e_4),\;f_3=\frac1y(\frac{b_1}{b_2}e_1+e_2),f_4=e_1$.
	
	The generalized complex structure
	
	$\begin{cases}
	J_1=-E_{12}+E_{21}+\la E_{33}+\frac{\la q_2-q_1}{\sqrt{\e yb_2y}}E_{41}+
	\frac {\la q_1+q_2}{\sqrt{\e yb_2}y}E_{42}+\la E_{44}\\
	R_1=yf_{34}^\#\\
	\si_1=-\frac{1}{y}(1+\la^2)(\frac{q_2}{y\sqrt{\e yb_2}}f^{13}_\#+\frac{q_1}{y\sqrt{\e yb_2}}f^{23}_\# -f^{34}_\#).
	\end{cases}$
	
	We have
	\[\exp(B)\phi(T^{-1})(J_1,R_1,\si_1)\phi(T)\exp(-B)=
	(E_{12}-E_{12}+\la(E_{33}+E_{44}),-cf_{34}^\#,-\frac1cf_{\#}^{34}),\]			
	where $c=-{\frac {\sqrt {\e yb_{{2}}}{y}^{2}\sqrt { \left( {\lambda}^{2}+1
				\right)  \left( {q_{{1}}}^{2}+{q_{{2}}}^{2} \right) }}{ \left( {
				\lambda}^{2}+1 \right)  \left( {q_{{1}}}^{2}+{q_{{2}}}^{2} \right) }}$,
	
	$T=\frac{\la q_2-q_1}{\sqrt{(\la^2+1)(q_1^2+q_2^2)}}E_{11}-
	{\frac {\la q_1+q_2}{\sqrt{(\la^2+1)(q_1^+q_2^2) }}}E_{12}+{\frac {\la q_1+q_2}{\sqrt{(\la^2+1)(q_1^+q_2^2) }}}E_{21}+{\frac {\la q_1-q_2}{\sqrt{(\la^2+1)(q_1^+q_2^2) }}}E_{22}+E_{33}+
	\frac{(\la^2+1)(q_1^2+q_2^2)}{\sqrt {\e yb_2}y\sqrt{(\la^2+1)({q_1^2+q_2^2)}}}E_{44}$ and
	
	$B=af^{13}_\#$, with  $a={\frac {{\lambda}^{2}{q_{{1}}}^{2}+{\lambda}^{2}{
				q_{{2}}}^{2}+{q_{{1}}}^{2}+{q_{{2}}}^{2}}{\sqrt {\e yb_{{2}}}{y}^{2}
			\sqrt { \left( {\lambda}^{2}+1 \right)  \left( {q_{{1}}}^{2}+{q_{{2}}}
				^{2} \right) }}}.$

	On the other hand, the automorphism $T_k:(f_1,f_2,f_3,f_4)\mapsto (f_1,f_2,f_3,kf_4)$ satisfies
	\[ \phi(T_k)(E_{12}-E_{12}+\la(E_{33}+E_{44}),-f_{34}^\#,-f_{\#}^{34})\phi(T_k)=(E_{12}-E_{12}+\la(E_{33}+E_{44}),-kf_{34}^\#,-\frac1kf_{\#}^{34}). \]\end{enumerate}

$\bullet$	$A_{4,1}$

\begin{enumerate}\item The isomorphism from $\U_2$: $f_1=-\frac{1}{q_1^2b_2}e_1,f_2=
	-\frac{b_1}{q_1b_2}e_1-\frac1{q_1}e_2,,f_3={q_2}e_3-q_1e_4,f_4=-\frac{q_1q_2b_2+1}{b_2q_1^2}e_3+e_4$.

	The complex structure
	
	$\begin{cases}
	J_1=\la E_{11}+\la E_{22}+\frac{b_2q_1^3+b_2q_1q_2^2+q_2}{q_1}E_{23}+{\frac {-{b_{{2}}}^{2}{q_{{1
					}}}^{4}-{b_{{2}}}^{2}{q_{{1}}}^{2}{q_{{2}}}^{2}-2\,b_{{2}}q_{{1}}q_{{2
				}}-1}{b_{{2}}{q_{{1}}}^{3}}}E_{34} +q_1b_2(q_1^2+q_2^2)E_{43}-\frac{b_2{q_{{1}}}
			^{3}+b_2q_1q_2^2+q_2}{q_1}E_{44}\\
		R_1=-b_{{2}}{q_{{1}}}^{3}f_{12}^\#\\
		\si_1=-\frac1{b_{{2}}{q_{{1}}}^{3}}(1+\la^2) f^{12}_\#.
		\end{cases}$
		
		We have
		
		\[ \phi(T^{-1})(J_1,R_1,\sigma_1)\phi(T)=(E_{43}-E_{34}+\la(E_{11}+E_{22}),-cf_{12}^\#,-\frac1c(1+\la^2) f_\#^{12}),\]
		
		where $c={q_{{1}}}^{5}{b_{{2}}}^{3}(q_1^2+q_2^2)^{2}$ and	$T=\frac{1}{q_1b_2(q_1^2+q_2^2)}(E_{11}+E_{22}+E_{33})+{\frac {b_{{2}}{q_{{1}}}^{3}+b_{{2}}q_{{1}}
				{q_{{2}}}^{2}+q_{{2}}}{{q_{{1}}}^{2}( {q_{{1}}}^{2}+{q_{{2}}}^{2
				}) b_{{2}}}}E_{34}+E_{44}.$
		
		On the other hand, the automorphism $T_1={k}^{\frac35}E_{11}+{k}
		^{\frac25}E_{22}+{k}^{\frac15}(E_{33}+E_{44})$
		satisfies
		\[ \phi(T_1)(E_{43}-E_{34},-f_{12}^\#,- f_\#^{12})\phi(T_1^{-1})=(E_{43}-E_{34}+\la(E_{11}+E_{22}),-kf_{12}^\#,-\frac1k(1+\la^2) f_\#^{12}). \]

		\item The isomorphism from $\U_2$: $f_1=\frac{1}{q_2^2b_2}e_1,f_2=
		-\frac{b_1}{q_2b_2}e_1-\frac1{q_2}e_2,f_3={q_1}e_4-q_2e_3,f_4=\frac{-q_1q_2b_2+1}{b_2q_2^2}e_4+e_3$.
		
		The generalized complex structure
		
		$\begin{cases}
		J_2=\la E_{11}+\la E_{22}+{\frac {{q_{{1}}}^{2}q_{{2}}b_{{2}
				}+b_{{2}}{q_{{2}}}^{3}-q_{{1}}}{q_{{2}}}}E_{33}+{\frac{-{b_{{2}}}^{2}{q_{{1
					}}}^{2}{q_{{2}}}^{2}-{b_{{2}}}^{2}{q_{{2}}}^{4}+2q_{{1}}q_{{2}}b_{{2
				}}-1}{b_{{2}}{q_{{2}}}^{3}}}E_{34}+q_{{2}}b_{{2}}
		( {q_{{1}}}^{2}+{q_{{2}}}^{2})E_{43}-{\frac {{q_{{1}}}^{2}q_{
					{2}}b_{{2}}+b_{{2}}{q_{{2}}}^{3}-q_{{1}}}{q_{{2}}}}E_{44}\\
		R_2=b_{{2}}{q_{{2}}}^{3}f_{12}^\#\\
		\si_2=\frac1{b_{{2}}{q_{{2}}}^{3}}(1+\la^2)f^{12}_\#.
		\end{cases}$
		
		We have
		\[ \phi(T^{-1})(J_2,R_2,\sigma_2)\phi(T)=
		(E_{43}-E_{34}+\la(E_{11}+E_{22}),-cf_{12}^\#,-\frac1c(1+\la^2) f_\#^{12}),\]where $c=-{q_{{2}}}^{5}{b_{{2}}}^{3} ( {q_{{1}}}^{2}+{q_{{2}}}^{2}
		) ^{2} $  and $T=\frac{1}{q_1b_2(q_1^2+q_2^2)}(E_{11}+E_{22}+E_{33})+{\frac {b_{{2}}{q_{{1}}}^{2}q_{{2}}+b_{{2}}
				{q_{{2}}}^{3}-q_{{1}}}{{q_{{2}}}^{2}( {q_{{1}}}^{2}+{q_{{2}}}^{2
				}) b_{{2}}}}E_{34}+E_{44}.$

		\item The isomorphism from $\U_3$: $f_1=-(rb_1-pb_2)(pe_1+re_2),f_2=-(rb_1-pb_2)e_1,f_3={b_2}e_1+{r}
		e_3,
		f_4=\frac{b_1}pe_1-\frac{b_2}pe_2-\frac{p}re_3-e_4.$

		The complex structure
		
		$\begin{cases}
		J_3=\la E_{11}+{\frac {b_{{2}}}{p(pb_{{2}}-rb_{{1}})}}E_{13}-{\frac {b_{{2}}( \la r+p)
			}{{r}^{2}(( pb_{{2}}-rb_{{1}}) p}}E_{14}+\la E_{22}-{\frac {b_{{2}}{p}^{2}-rb_{{2}}(\la-1)p2b_
			{{1}}{r}^{2}}{pr( pb_{{2}}-rb_{{1}})}}E_{23}\\\qquad+{\frac {{p}^{3}b_
			{{2}}+{p}^{2}rb_{{2}}+{r}^{2} \left(  \left( \lambda+1 \right) b_{{2}}
			+b_{{1}} \right) p+\lambda\,b_{{1}}{r}^{3}}{p \left( pb_{{2}}-rb_{{1}}
			\right) {r}^{3}}}E_{24}+{\frac {p}{r}}E_{33}-{\frac{p^{2}+{r}^{2}}{{r}^{3}}}E_{34} +rE_{43}-{\frac {p}{r}}E_{44}\\
	R_3=\frac {1}{r( pb_{{2}}-rb_{{1}}) ^{2}}f_{12}^\#\\
	\si_3=(\la^2+1)(r( pb_2-rb_1)^2f^{12}_\#+b_2r(pb_2-rb_1)f^{13}_\#+\frac{p^2b_2^2-r^2b_1^2}{p}f^{14}_\#+\frac{b_2( pb_2-rb_1)}{p}f^{24}_\#+\frac{b_2^2}{p}f^{34}_\#).
	\end{cases}$
	
	We have
	
	\[ \phi(T)\exp(B)(J_3,R_3,\si_3)\exp(-B)\phi(T)=
	(E_{43}-E_{34}+\la(E_{11}+E_{22}),-cf_{12}^\#,-\frac1c(1+\la^2) f_\#^{12}), \]where $c=-{\frac {{r}^{6}}{( pb_{{2}}-rb_{{1}}) ^{2}}},$

	$T={r}^{4}E_{11}+rE_{12}+{r}^{3}E_{22}+{\frac {-\lambda\,p{r}^{4}b_{{2}}+{p}^{2}{r}^{3}b_{{2}}+p{r}^{4}b
			_{{2}}+{r}^{5}b_{{1}}}{{r}^{2} \left( pb_{{2}}-rb_{{1}} \right) p}}E_{24}
	+{r}^{2}E_{33}-pE_{34}+rE_{44}$ and
	
	$B=af^{14}_\#+bf^{23}_\#+cf^{24}_\#,$ with
	
	\[ \begin{cases}a=-( {\lambda}^{2}+1) b_{{2}}( pb_{{2}}-rb_{{1}})\\
	b={\frac {( pb_{{2}}-rb_{{1}})( {p}^{3}b_{{2}}-r( \lambda\,b_{{2}}+b_{{1}}) {p}^{2}+{r}^{2}\lambda\,b_{{1
				}}p+{r}^{4}b_{{2}}) }{{r}^{3}p}}\\
		c=-{\frac { ( pb_{{2}}-rb_{{1}} )( {p}^{4}b_{{2}}-rb_{{1}}{p}^{3}+{r}^{2}b_{{2}}{p}^{
					2}+{r}^{3}( rb_{{2}}-b_{{1}}) p+{r}^{5}\lambda\,b_{{2}}
				) }{{r}^{5}p}}.
		\end{cases} \]

		\item The automorphism from $\U_3$: $f_1=-b_2qe_1,f_2=b_1e_1+b_2e_2,f_3=e_3,f_4=e_4$.
		The generalized complex structure

		$\begin{cases} J_4=E_{34}-E_{43}+\la(E_{11}+E_{22})\\
		R_4={\frac {1}{{b_{{2}}}^{2}q}}f_{12}^\#\\
		\si_4={{{b_{{2}}}^{2}q}}(1+\la^2)f^{12}_\#.\end{cases}$

		Is clearly equivalent to $(E_{43}-E_{34}+\la(E_{11}+E_{22}),-f_{12}^\#,-(1+\la^2) f_\#^{12})$.
	\end{enumerate}
	
	$\bullet$	$A_{4,5}^{-\frac12,-\frac12}\oplus A_1$
	
	The isomorphism from $\U_1$: $f_1=e_1,f_2=e_3-\frac23q_1e_1,f_3=e_4-\frac23q_2e_1,f_4=e_2$.
	
	The generalized complex structure
	
	$\begin{cases} J=\la E_{11}-\frac{2\la q_1+2q_2}{3}E_{12}-\frac{2\la q_2-2q_1}{3}E_{13}+E_{23}-E_{32}+\la E_{44}\\
	R=-f_{14}^\#\\
	\si=-(1+\la^2)(f^{14}_\#-\frac23q_1f^{24}_\#-\frac23q_2f^{34}_\#).
	\end{cases}$
	
	we have
	\[ \exp(B)(J,R,\si)\exp(-B)=(E_{23}-E_{32},-f_{34}^\#,-f_\#^{34}),\]

	where
	$B=\la f^{14}_\#-\frac23(\la q_1+q_2)f^{24}_\#-\frac23(\la q_2-q_1)f^{34}_\#.$

	$\bullet$	$A_{4,6}^{-2\be,\be}$
	
	\begin{enumerate}\item The isomorphism from $\U_1$: $f_1=e_1,f_2=(4y^2+9)\left( e_4+\frac{4yq_1-6q_2}{4y^2+9}e_1 \right),f_3=\e(4y^2+9)\left( e_3-\frac{4yq_2+6q_1}{4y^2+9}e_1 \right),f_4=\frac{\e}ye_2$,$\e=-\frac{|y|}y.$
		
		The generalized complex structure
		
		$\begin{cases}  J_\e=\la E_{11}+ (( 4\al y+6)q_1+4yq_2-6\la))E_{12}+2\e(q_1(2y-3\la)-q_2(2y\lambda+3))E_{13}-\e E_{23}+\frac{1}{\e}E_{32}+\la E_{44}\\
		R_\e=-\frac {y}{\epsilon}f_{14}^\#\\
		\si_\e=-(\la^2+1)(\frac {\e}{y}f^{14}_\#+\frac{\e( 4yq_{{1}}-6q_2)}{y}f^{24}_\#-\frac{4yq_2+6q_1}{y}f^{34}_\#).\end{cases}$

		We have
		\[\phi(T)\exp(B_\e)(J_\e,R_\e,\si_\e)\exp(-B_\e)\phi(T^{-1})=  \e(E_{23}-E_{32},-f_{14}^\#,-f_\#^{14}),\]		
		where
		
		$T=\frac1y E_{11}+E_{22}+E_{33}+E_{44}$,
		
		$B_\e={\frac{\e\la}{y}}f^{14}_\#+{2\e\frac{2\la yq_1-3\la q_2+2yq_2+3q_1}{y}}f^{24}_\#-
		2{\frac {2\,\lambda\,yq_{{2}}+3\la q_{{1}}-2yq_1+3q_2}{y}}f^{34}_\#. $
		
	\end{enumerate}

	$\bullet$	$A_{4,10}$
	
	The isomorphism from $\U_2$: $f_1=-\frac{1}{y^2b_1}e_1,f_2=\frac{1}{yb_1}(\frac{q_2}ye_1-e_3),
	f_3=\frac{1}{yb_1}(\frac{q_1}ye_1+e_4),f_4=\frac1ye_2$.

	The generalized complex structure

	$\begin{cases} J=\la E_{11}-(\la q_2-q_1)E_{12}-(\la q_1+q_2)E_{13}-E_{23}+E_{32}+\la E_{44}\\
	R=b_{{1}}{y}^{3}f_{14}^\#\\
	\si=\frac{\la^2+1}{b_{{1}}{y}^{3}}( f^{14}_\# -q_2f^{24}_\#-q_1f^{34}_\#).\end{cases}$
	
	We have
	\[ \exp(B)(J,R,\si)\exp(-B)=(E_{23}-E_{32}+\la(E_{11}+E_{44}),-cf_{14}^\#,-\frac1c(1+\la^2) f_{\#}^{14}), \]
	where $c=-y^3b_1$,	 $B=\frac1{b_1y^3}( (\la q_2-q_1)f^{24}_\#+(\la q_1+q_2)f^{34}_\#)$.
	
	On the other hand, the automorphisms $T_1=\frac1kE_{11}+\frac {1}{\sqrt {k}}(E_{22}+E_{33})+E_{14}$ and $T_2=-\frac1kE_{11}+\frac {1}{\sqrt {-k}}(E_{22}+E_{33})+E_{14}$
	satisfy
	\[ \phi(T_1)(E_{23}-E_{32}+\la(E_{11}+E_{44}),-kf_{14}^\#,-\frac1k(1+\la^2) f_{\#}^{14})\phi(T_1^{-1})=(E_{23}-E_{32}+\la(E_{11}+E_{44}),-f_{14}^\#,-(1+\la^2) f_{\#}^{14}) \]
	\[ \phi(T_2)(E_{23}-E_{32}+\la(E_{11}+E_{44}),-kf_{14}^\#,-\frac1k(1+\la^2) f_{\#}^{14})\phi(T_2^{-1})=(E_{23}-E_{32}+\la(E_{11}+E_{44}),f_{14}^\#,(1+\la^2) f_{\#}^{14}). \]

	\subsection{The Non-Unimodular Case}

	$\bullet$  $A_{2}\oplus 2A_1$
	
	\begin{enumerate}\item The isomorphism from $\A_3$: $f_2=\frac{b_1}{b_3}e_1+\frac{b_2}{b_3}e_2+e_3$, $f_3=e_1$, $f_4=e_2$.
		
		The generalized complex structure:
		
		$\begin{cases} J_1=b_3E_{12}-\frac{1}{b_3}E_{21}+\frac{b_1}{b_3^2}E_{31}+\frac{\la b_1}{b_3}E_{32}+\la E_{33}+\frac{b_2}{b_3^2}E_{41}+\frac{\la b_2}{b_3}E_{42}+\la E_{44}\\
		R_1=-f_{34}^\#\\
		\si_1=(\la^2+1)(\frac{b_2}{b_3}f^{23}_\#-\frac{b_1}{b_3}f^{24}_\#-f^{34}_\#).\end{cases}$
		
		We have
		\[ \exp(B)\phi(T)(J_1,R_1,\si_1)\phi(T^{-1})\exp(-B)=(E_{12}-E_{21},-f_{34}^\#,-f^{34}_\#), \]
		where
		
		$T=E_{11}+b_3E_{22}+\frac{\la b_2}{b_3^2}E_{13}-E_{34}-\frac{\la b_1}{b_3}E_{41}+E_{43}$,
		
		$B= -\frac{\la^2b_1+b_1}{b_3^2} f^{13}_\# -\frac{\la^2b_2+b_2}{b_3^2}f^{14}_\#+\la f^{34}_\#.$

		\item The isomorphism from $\B_3$: $f_1=-e_2$, $f_2=e_1$, $f_3=-q_1e_1+e_3$, $f_4=-q_2e_1+e_4$.
		
		The generalized complex structure:
		
		$\begin{cases} J_2=\la E_{11}+\la E_{22}-(\la q_1+q_2)E_{23}-(\la q_2-q_1)E_{24}+E_{34}-E_{43}\\
		R_2=-f_{12}^\#\\
		\si_2=(\la^2+1)(-f^{12}_\#+q_1f^{13}_\#+q_2f^{14}_\#).
		\end{cases} $

		We have
		\[ \exp(B)(J_2,R_2,\si_2)\exp(-B)=(E_{34}-E_{43},-f_{12}^\#,-f^{12}_\#),\]
		
		where $B=\la f^{12}_\# -(\la q_1+q_2)f^{13}_\#-(\la q_2-q_1)f^{14}_\#.$
	\end{enumerate}

	$\bullet$  $2A_{2}$		
	
	The isomorphism from $\B_2$:  $f_1=-e_2$, $f_2=e_1$, $f_3=-q_2e_1-e_4$, $f_4=q_1e_1+e_3$.
	
	The generalized complex structure:

	$\begin{cases} J=\la E_{11}+\la E_{22}-(\la q_2-q_1)E_{23}+(\la q_1+q_2)E_{24}+E_{34}-E_{43}\\
	R=-f_{12}^\#\\
	\si=(\la^2+1)(-f^{12}_\#+q_2f^{13}_\#-q_1f^{14}_\#).
	\end{cases} $
	\begin{itemize}
		\item[-] For $\la q_1+q_2\not=0$, we have
		\[ \exp(B).\phi(T).(J,R,\si)\phi(T^{-1}).\exp(-B)=(E_{24}+E_{34}-E_{43},-\frac{1}{\la q_1+q_2}f_{12}^\#,-(\la q_1+q_2)f^{12}_\#+(\la q_1+q_2)f^{13}_\#),\]		
		where
		
		$T=E_{11}+(\la q_1+q_2) ^{-1}E_{22}+E_{33}+E_{44}$,
		
		$B=\la( \la q_1+q_2)f^{12}_\# -(\la q_2-q_1)f^{13}_\#.$
		\item[-]For $\la q_1+q_2=0$, we have
		\[ \exp(B).(J,R,\si).\exp(-B)=(E_{34}-E_{43},-f_{12}^\#,-f^{12}_\#),\]		
		where $B=\la f^{12}_\#+({\la}^{2}q_1+q_1)f^{13}_\#.$
	\end{itemize}
	
	$\bullet$ $A_{33}\oplus A_1$
	\begin{enumerate}
		\item  The isomorphism from $\A_1$: $f_1=e_3$, $f_2=e_4$, $f_3=-e_1$, $f_4=e_2$.
		
		The generalized complex structure:
		
		$\begin{cases}
		J_1=E_{12}- E_{21}+\la E_{22}+\la E_{44}\\
		R_1=f_{34}^\#\\
		\si_1=(\la^2+1)f^{34}_\#.
		\end{cases} $

		We have
		\[ \exp(B)\phi(T)(J_1,R_1,\si_1)\phi(T^{-1})\exp(-B)=(E_{12}-E_{21},-f_{34}^\#,-f^{34}_\#),\]
		where
		
		$T=E_{11}+E_{22}+E_{33}-E_{44}$, $B=\la f^{34}_\#.$
		
		\item  The isomorphism from $\A_2$:  $f_1=-\frac{b}{x}e_2+e_4$, $f_2=-\frac{a}{x}e_2+e_3$, $f_3=-\frac{1}{x}e_1$, $f_4=e_2$.
		
		The generalized complex structure:
		
		$\begin{cases} J_2=-E_{12}+ E_{21}+\la E_{33}+\frac{-\la b+a}{x}E_{41}-\frac {\la a+b}{x}E_{42}+\la E_{44}\\
		R_2=xf_{34}^\#\\
		\si_2=(\la^2+1)(\frac{b}{x^2}f^{13}_\#+\frac{a}{x^2}f^{23}_\#+\frac{1}{x}f^{34}_\#).
		\end{cases} $

		We have
		\[ \exp(B)\phi(T)(J_2,R_2,\si_2)\phi(T^{-1})\exp(-B)=(E_{12}-E_{21},-f_{34}^\#,-f^{34}_\#),\]
		where
		
		$T=-E_{11}+E_{22}+E_{33}-\frac{1}{x}E_{44}$,
		
		$B=\frac{\la b-a}{x^2}f^{13}_\#-\frac{\la a+b}{x^2}f^{23}_\#+\la f^{34}_\#.$
		
	\end{enumerate}
	$\bullet$  $A^\al_{3,7}\oplus A_1$
	\begin{enumerate}
		\item  The isomorphism from $\A_1$:$f_1=e_3$, $f_2=se_4$, $f_3=-\frac{s}{y_1}e_1$, $f_4=e_2$, $s=\frac{|y_1|}{y_1}$.
		
		The generalized complex structure:
		
		$\begin{cases} J_1=sE_{12}-sE_{21}+\la E_{33}+\la E_{44}\\
		R_1=sy_1f_{34}^\#\\
		\si_1=\frac{s(\la^2+1)}{y_1}f^{34}_\#.
		\end{cases} $

		We have
		\[ \exp(B)\phi(T)(J_1,R_1,\si_1)\phi(T^{-1})\exp(-B)=(sE_{12}-sE_{21},-f_{34}^\#,-f^{34}_\#),\]
		where
		
		$T=E_{11}+E_{22}+E_{33}-\frac{s}{y_1}E_{44}$,
		
		$B=\la f^{34}_\#.$
		\item The isomorphism from $\A_2$: $f_1=s(-\frac{bc+a}{x(c^2+1)}e_2+e_3)$, $f_2=\frac{ac-b}{x(c^2+1)}e_2+e_4$, $f_3=-\frac{s}{cx}e_1$, $f_4=e_2$, $s=\frac{|c|}{c}$.
		
		The generalized complex structure:
		
		$\begin{cases} J_2=sE_{12}-sE_{21}+\la E_{33}-\frac{s(bc\la-ac+a\la+b)}{x(c^2+1)}E_{41}+\frac{ac\la+bc-b\la+a}{x(c^2+1) }E_{24}+\la E_{44}\\
		R_2=scx\,f_{34}^\#\\
		\si_2=\frac{( {\lambda}^{2}+1)(bc+a)}{cx^2(c^2+1)}f^{13}_\#-\frac {s(\la^2+1)(ac-b)}{cx^2(c^2+1)}f^{23}_\#+\frac {{s(\lambda}^{2}+1)}{cx}f^{34}_\#.
		\end{cases} $

		We have
		\[ \exp(B)\phi(T)(J_2,R_2,\si_2)\phi(T^{-1})\exp(-B)=(sE_{12}-sE_{21},-f_{34}^\#,-f^{34}_\#), \]
		where
		
		$T=E_{11}+E_{22}+E_{33}-\frac{s}{cx}E_{44}$,
		
		$B=-\frac {s(bc\la-ac+a\la+b)}{cx^2(c^2+1) }f^{13}_\#-\frac{s(-\la ac-bc+\la b-a)}{cx^2(c^2+1)}f^{23}_\#+f^{34}_\#.$
	\end{enumerate}

	$\bullet$ $A_{4,2}^{-1}$
	\begin{enumerate}
		\item The isomorphism from $\A_3$: $f_1=-\frac{q}{2p}e_1+e_2$,  $f_2=\frac{2pb_1+qb_2}{2p^2}e_1$, $f_3=\frac{b_2}{2p}e_2+e_3$, $f_4=\frac{1}{p}e_4$.
		
		The generalized complex structure:
		
		$\begin{cases} J_1=\la E_{11}+\la E_{22}+\frac{\la b_2}{2p}E_{13}-\frac {b_2}{2p^2}E_{14}+\frac{\la qb_2}{4pb_1+2qb_2}E_{23}-\frac{qb_2}{2p(2pb_1+qb_2) }E_{24}+\frac{1}{p}E_{34}-pE_{43}\\
		R_1=\frac{2p^2}{2pb_1+qb_2}f_{12}^\#\\
		\si_1= (\la^2+1)(\frac{2pb_1+qb_2}{2p^2}f^{12}_\#+\frac{qb_2}{4p^2}f^{13}_\#-\frac{(2pb_1+qb_2)b_2}{4p^3}f^{23}_\#).
		\end{cases} $

		We have
		\[ \exp(B)\phi(T)(J_1,R_1,\si_1)\phi(T^{-1})\exp(-B)=(\la E_{11}+\la E_{22}+E_{34}-E_{43},-f_{12}^\#,-(\la^2+1)f^{12}_\#),\]
		where

		$T=-\frac{2pb_1+qb_2}{2p^3}E_{11}-\frac{\la( 2pb_1+qb_2) b_2}{4p^5}E_{14}+pE_{22}+yE_{24}+pE_{33}+E_{44}$,
		
		$B=\frac{-(4pb_1+2qb_2) y+\la qb_2}{4pb_1+2qb_2}f^{13}_\#-\frac{2\la(2pb_1+qb_2) y+qb_2}{4pb_1+2qb_2}f^{14}_\#-\frac{(2pb_1+qb_2)(\la^2+1)b_2}{4p^5}f^{24}_\#.$

		\item  The isomorphism from $\A_3$: $f_1=\frac{p-1}{r}e_1+e_2$, $f_2=\frac{p+1}{r}e_1+e_2$, $f_4=e_4$, $f_3=\frac{pb_2-rb_1+b_2}{r(pb_2-rb_1-b_2)}e_1-\frac{2}{pb_2-rb_1-b_2}e_3$.
		
		The generalized complex structure:
		
		$\begin{cases} J_2=\la E_{11}-\frac{\la(pb_2-rb_1+b_2)}{2(pb_2-rb_1-b_2)}E_{13}-\frac{pb_2-rb_1+b_2}{4}E_{14}+\la E_{22}+
		\frac{\la(pb_2-rb_1+b_2)}{2(pb_2-rb_1-b_2)}E_{23}\\
		\qquad+\frac{pb_2-rb_1+b_2}{4}E_{24}
		-\frac{pb_{{2}}-rb_{{1}}-b_2}{2}E_{34}+\frac{2}{ pb_2-rb_1-b_2}E_{43}\\
		R_2=\frac{r}{2}f_{34}^\#\\
		\si_2= (\la^2+1)(\frac{2}{r}f^{12}_\#+\frac{( pb_{{2}}-rb_{{1}}+b_{{2}})}{r( pb_{{2}}-rb_{{1}}-b_{{2}})}f^{13}_\#+\frac {( pb_{{2}}-rb_{{1}}+b_{{2}})}{r( pb_{{2}}-rb_{{1}}-b_{{2}})}f^{23}_\#.
		\end{cases} $

		we have
		\[\phi(T) \exp(B)(J_2,R_2,\si_2)\exp(-B)\phi(T^{-1})=(\la E_{11}+\la E_{22}+E_{34}-E_{43},-f_{12}^\#,-(\la^2+1)f^{12}_\#), \]
		where
		
		$T={\frac {pb_{{2}}-rb_{{1}}-b_{{2}}}{r}}E_{11}+{\frac{\la( pb_{{2}}-rb_{{1}}-b_{{2}})( pb_{{2
					}}-rb_{{1}}+b_{{2}})}{r}}E_{14}-\frac{2}{( pb_{{2}}-rb_{{1}}-b_{{2}})}E_{22}+
			{\frac{\la( pb_{{2}}-rb_{{1}}+b_{{2}})}{2(pb_{{2}}-rb_{{1}}-b
					_{{2}})}}E_{23}-\frac{2}{ pb_{{2}}-rb_{{1}}-b_{{2}}
			}E_{33}+E_{44}$,
			
			$B=-{\frac{{\lambda}^{2}pb_{{2}}-{\lambda}^{2}rb_{{1}}+{\lambda}^{2}b_{{2}}+pb_{{2}}-rb_{{1}}+b_{{2}}}{2r}}f^{14}_\#
			-{\frac{{\lambda}^{2}pb_{{2}}-{
						\lambda}^{2}rb_{{1}}+{\lambda}^{2}b_{{2}}+pb_{{2}}-rb_{{1}}+b_{{2}}}{2r}}f^{24}_\#.$

			\item The isomorphism from $\A_3$: $f_1=\frac{p+1}{r}e_1+e_2$, $f_2=\frac{p-1}{r}e_1+e_2$, $f_4=-e_4$
			$f_3=-\frac{pb_2-rb_1-b_2}{r(pb_2-rb_1+b_2)}e_1-\frac{2}{pb_2-rb_1+b_2}e_3$.
			
			The generalized complex structure:

			$\begin{cases} J_3=\la E_{11}-\frac{\la(pb_2-rb_1-b_2)}{2(pb_2-rb_1+b_2)}E_{13}+\frac{pb_2-rb_1-b_2}{4}E_{14}+\la E_{22}+
			\frac{\la(pb_2-rb_1-b_2)}{2(pb_2-rb_1+b_2)}E_{23}\\
			\qquad-\frac{pb_2-rb_1-b_2}{4}E_{24}
			+\frac{pb_{{2}}-rb_{{1}}+b_2}{2}E_{34}-\frac{2}{ pb_2-rb_1+b_2}E_{43}\\
			R_3=-\frac{r}{2}f_{34}^\#\\
			\si_3=- (\la^2+1)(\frac{2}{r}f^{12}_\#+\frac{( pb_{{2}}-rb_{{1}}-b_{{2}})}{r( pb_{{2}}-rb_{{1}}+b_{{2}})}f^{13}_\#+\frac {( pb_{{2}}-rb_{{1}}-b_{{2}})}{r( pb_{{2}}-rb_{{1}}+b_{{2}})}f^{23}_\#.
			\end{cases} $

			We have
			\[\phi(T) \exp(B)(J_3,R_3,\si_3)\exp(-B)\phi(T^{-1})=(\la E_{11}+\la E_{22}+E_{34}-E_{43},-f_{12}^\#,-(\la^2+1)f^{12}_\#),\]
			where
			
			$T={\frac {pb_{{2}}-rb_{{1}}+b_{{2}}}{r}}E_{11}+{\frac{\la( pb_{{2}}-rb_{{1}}-b_{{2}})( pb_{{2
						}}-rb_{{1}}+b_{{2}})}{r}}E_{14}-\frac{2}{( pb_{{2}}-rb_{{1}}+b_{{2}})}E_{22}+
				{\frac{\la( pb_{{2}}-rb_{{1}}+b_{{2}})}{2(pb_{{2}}-rb_{{1}}-b
						_{{2}})}}E_{23}-\frac{2}{ pb_{{2}}-rb_{{1}}3b_{{2}}
				}E_{33}+E_{44}$,
				
				$B=-{\frac{{\lambda}^{2}pb_{{2}}-{\lambda}^{2}rb_{{1}}-{\lambda}^{2}b_{{2}}+pb_{{2}}-rb_{{1}}-b_{{2}}}{2r}}f^{14}_\#
				-{\frac{{\lambda}^{2}pb_{{2}}-{
							\lambda}^{2}rb_{{1}}+{\lambda}^{2}b_{{2}}+pb_{{2}}-rb_{{1}}-b_{{2}}}{2r}}f^{24}_\#.$
			\end{enumerate}
			
			$\bullet$ $A_{4,2}^1$
			\begin{enumerate}
				\item
				The isomorphism from $\B_3$: $f_1=\frac{q_2}{q_1}e_3+e_4$, $f_2=q_1e_1$, $f_3=e_3$, $f_4=e_2$.
				
				The generalized complex structure:
				
				$\begin{cases} J_1=-\frac{q_2}{q_1}E_{11}-E_{13}+E_{22}+\frac{q_1^2+q_2^2}{q_1^2}E_{31}+\frac {q_2}{q_1}E_{33}+\la E_{44}.\\
				R_1=-\frac{1}{q_1}f_{24}^\#\\
				\si_1=-(\la^2+1)q_1f^{24}_\#.
				\end{cases}$

				We have
				\[\exp(B) \phi(T)(J_1,R_1,\si_1)\phi(T^{-1})\exp(-B)=(-E_{13}+E_{31},-\frac{1}{q_1}f_{24}^\#,-q_1f^{24}_\#),\]
				where $T=Id_4+\frac{q_2}{q_1}E_{24}-\frac {q_2}{q_1}E_{31}$, $B=\la q_1 f^{24}_\#.$
				
				\item  The isomorphism from $\B_3$:  $f_1=e_3+\frac{q_1}{q_2}e_4$, $f_2=-q_2e_1$, $f_3=e_4$, $f_4=e_2$.
				
				The generalized complex structure:
				
				$\begin{cases} J_2=\frac{q_1}{q2}E_{11}+E_{13}+\la E_{22}-\frac{q_1^2+q_2^2}{q_2^2}E_{31}-\frac{q_1}{q_2}E_{33}+\la E_{44}\\
				R_2=\frac{1}{q_2}f_{24}^\#\\
				\si_2=(\la^2+1)q_2f^{24}_\#.
				\end{cases}$

				We have
				\[\exp(B) \phi(T)(J_2,R_2,\si_2)\phi(T^{-1})\exp(-B)=(E_{13}-E_{31},-\frac{1}{q_2}f_{24}^\#,-q_2f^{24}_\#),\]
				where $T=Id_4+\frac{q_1}{q_2}E_{24}-\frac {q_1}{q_2}E_{31}$,
				
				$B=-\la q_2 f^{24}_\#.$
				
			\end{enumerate}
			
			$\bullet$ $A_{4,3}$
			
			The isomorphism from $\A_3$: $f_1=\frac{qb_2+b_1b_3}{b_3}e_1+b_2e_2+b_3e_3,$  $f_2=\frac{q}{b_3}e_1$, $f_3=e_2$, $f_4=\frac{1}{b_3}e_4$.
			
			The generalized complex structure:
			
			$\begin{cases} J=\frac{1}{b_3^2}E_{14}+\frac{\la(qb_2+b_1b_3)}{q}E_{21}+\la E_{22}-\frac{qb_2+b_1b_3}{qb_3^2}E_{24}+\la b_2E_{31}+\la E_{33}-\frac{b_2}{b_3^2}E_{34}-\frac{1}{b_3^2}E_{41}\\
			R=-\frac{b_3}{q}f_{23}^\#\\
			\si=\frac{(\la^2+1)}{b_3} (qb_2f^{12}_\#-(qb_2+b_1b_3)f^{13}_\#-qf^{23}_\#).
			\end{cases}$			
			
			We have
			\[ \phi(T)\exp(B)(J,R,\si)\exp(-B)\phi(T^{-1})=(E_{14}-E_{41},-f_{23}^\#,-f^{23}_\#), \]
			where
			
			$T=b_3^2E_{11}+E_{22}+{\frac {\lambda\, \left( qb_{{2}}+b_{{1}}b_{{3}} \right) }{q{b_{{
								3}}}^{2}}}E_{24} +{\frac {q}{b_{{3}}}}E_{33}+{\frac {
					\lambda\,b_{{2}}q}{{b_{{3}}}^{3}}}E_{34}+E_{44}$,
			
			$B=\frac{\la q}{b_3}f^{23}_\#-\frac {qb_2}{b_3^3}f^{24}_\#+
			\frac{qb_2+b_1b_3}{b_3^3}f^{34}_\#.$
			
			
			$\bullet$ $A_{4,5}^{-\al,\al}$, $|\al|<1$
			
			\begin{enumerate}
				\item The isomorphism from $\A_3$: $f_1=-\frac{pb_1+qb_2+b_1b_3}{(b_3+p)(p-b_3)}e_1+\frac{b_2}{b_3+p}e_2+e_3$, $f_2=-\frac{q}{2p}e_1+e_2$, $f_3=e_1$, $f_4=\frac{1}{b_3}e_4$.
				
				The generalized complex structure:
				
				$\begin{cases} J_1=\frac{1}{b_3}E_{14}+\frac{\la b_2}{p+b_3}E_{12}+\la E_{22}-\frac {b_2}{( p+b_3) b_3}E_{24}-\frac{\la(2pb_1+qb_2) }{2p( p-b_3)}E_{13}+\la E_{33}+\frac {2pb_1+qb_2}{2p( p-b_3) b_3}E_{34}-\frac{1}{b_3}E_{14}\\
				R_1=f_{23}^\#\\
				\si_1= (\la^2+1)(\frac{(2pb_1+qb_2)}{2p( p-b_3) }f^{12}_\#+\frac{b_2}{p+b_3}f^{13}_\#+f^{23}_\#).
				\end{cases}$

				We have
				\[ \phi(T)\exp(B)(J_1,R_1,\si_1)\exp(-B)\phi(T^{-1})=(E_{14}-E_{41},-f_{23}^\#,-f^{23}_\#),\]
				where

				$T=b_3E_{11}-E_{22}-\frac{\la b_2}{(p+b_3)b_3}E_{24}+E_{33}-\frac{\la( 2pb_1+qb_2)}{2p(p-b_3)b_3}E_{34}+E_{44}$,
				
				$B=-\la f^{23}_\#-\frac{2pb_1+qb_2}{2p(p-b_3)b_3} f^{24}_\#-\frac {b_2}{(p+b_3)b_3}f^{34}_\#.$

				\item  The isomorphism from $\A_3$: $f_1=-\frac{pb_1+qb_2+b_1b_3}{(b_3+p)(p-b_3)}e_1+\frac{b_2}{b_3+p}e_2+e_3$, $f_2=e_1$, $f_3=-\frac{q}{2p}e_1+e_2$, $f_4=\frac{1}{b_3}e_4$.
				The generalized complex structure:

				$\begin{cases} J_2=\frac{1}{b_3}E_{14}-\frac{(2pb_1+qb_2)\la}{2(p-b_3)p}E_{21}+\la E_{22}+\frac{2pb_1+qb_2}{2pb_3(p-b_3) }E_{24}+\frac{\la b_2}{p+b_3}E_{31}+\la E_{33}-\frac{b_2}{(p+b_3)b_3}E_{34}-b_3E_{41}\\
				R_2=-f_{23}^\#\\
				\si_2=(\la^2+1)(\frac{b_2}{p+b_3}f^{12}_\#+\frac{2pb_1+qb_2}{2p(p-b_3)}f^{13}_\#-f^{23}_\#).\\
				\end{cases}$
				
				We have
				\[ \phi(T)\exp(B)(J_2,R_2,\si_2)\exp(-B)\phi(T^{-1})=(E_{14}-E_{41},-f_{23}^\#,-f^{23}_\#),\]
				where
				
				$T=b_3E_{11}+E_{22}-\frac{(2pb_1+qb_2)\la}{2pb_3(p-b_3)}E_{24}+E_{33}+\frac{\la b_2}{b_3(p+b_3)}E_{34}+E_{44}$,
				
				$B=\la f^{23}_\#-\frac{b_2}{b_3(p+b_3)}f^{24}_\#-\frac{2pb_1+qb_2}{2pb_3(p-b_3)}f^{34}_\#.$

				\item  The isomorphism from $\A_3$: $f_1=-\frac{p^2b_2-prb_1-rb_1b_3-b_2}{r(b_3^2-1)}e_1-\frac{pb_2-rb_1-b_2b_3}{b_3^2-1}e_2+e_3$, $f_2=\frac{p-1}{r}e_1+e_2$, $f_3=\frac{p+1}{r}e_1+e_2$, $f_4=\frac{1}{b_3}e_4$.
				
				The generalized complex structure:
				
				$\begin{cases} J_3=\frac{1}{b_3}E_{14}+\frac{\la(pb_2-rb_1+b_2)}{2(b_3+1)}E_{21}+\la E_{22}-\frac{pb_2-rb_1+b_2}{2b_3(1+b_3)}E_{24}-\frac{\la (pb_2-rb_1-b_2)}{2(b_3-1)}E_{31}+\la E_{33}+\frac{pb_2-rb_1-b_2}{2 b_3(b_3-1)}E_{34}-b_3E_{41}\\
				R_3=\frac{r}{2}f_{23}^\#\\
				\si_3=(\la^{2}+1)(\frac{(pb_2-rb_1-b_2)}{r(b_3-1)}f^{12}_\#+\frac{( pb_2-rb_1+b_2)}{r(b_{{3}}+1)}f^{13}_\#+\frac{2}{r}f^{23}_\#).
				\end{cases}$
				
				We have
				\[ \phi(T)\exp(B)(J_3,R_3,\si_3)\exp(-B)\phi(T^{-1})=(E_{14}-E_{41},-f_{23}^\#,-f^{23}_\#),\]
				where
				
				$T=b_3E_{11}-\frac{2}{r}E_{22}-\frac{\la(pb_2-rb_1+b_2)}{rb_3(1+b_3)}E_{24}+E_{33}-
				\frac{\la ( pb_2-rb_1-b_2)}{2b_3(b_3-1)}E_{34}+E_{44}$,

				$B=-\frac{2\la}{r}f^{23}_\#-\frac{pb_2-rb_1-b_2}{rb_3(b_3-1)}f^{24}_\#-\frac{pb_2-rb_1+b_2}{rb_3(1+b_3)}f^{34}_\#.$
				
				\item  The isomorphism from $\A_3$: $f_1=-\frac{p^2b_2-prb_1-rb_1b_3-b_2}{r(b_3^2-1)}e_1-\frac{pb_2-rb_1-b_2b_3}{b_3^2-1}e_2+e_3$  $f_2=\frac{p+1}{r}e_1+e_2$, $f_3=\frac{p-1}{r}e_1+e_2$, $f_4=\frac{1}{b_3}e_4$.

				The generalized complex structure:
				
				$\begin{cases} J_4=\frac{1}{b_3}E_{14}-\frac{\la(pb_2-rb_1-b_2) }{2(b_3-1)}E_{21}+\la E_{22}+\frac{pb_2-rb_1-b_2}{2b_3(b_3-1)}E_{24}+ \frac {\la( pb_2-rb_1+b_2)}{2(1+b_3)} E_{31}+\la E_{33}-  \frac{pb_2-rb_1+b_2}{2b_3(1+b_3)} E_{34}-b_3E_{41}\\
				R_4=-\frac{r}{2}f_{23}^\#\\
				\si_4=(\la^2+1)(\frac{pb_2-rb_1+b_2}{r(b_3+1)}f^{12}_\#+\frac {pb_2-rb_1-b_2}{r(b_3-1)}f^{13}_\#-\frac{2}{r}f^{23}_\#).
				\end{cases}$
				
				We have
				\[ \phi(T)\exp(B)(J_4,R_4,\si_4)\exp(-B)\phi(T^{-1})=(E_{14}-E_{41},-f_{23}^\#,-f^{23}_\#),\]
				where
				
				$T=b_3E_{11}+\frac{2}{r}E_{22}-\frac{\la( pb_2-rb_1-b_2)}{rb_3(b_3-1)}E_{24}+E_{33}+ \frac{\la( pb_2-rb_1+b_2) }{2b_3(b_3+1)}E_{34}+E_{44}$,
				
				$B=\frac{2\la}{r}f^{23}_\#-\frac{pb_2-rb_1+b_2}{rb_3( 1+b_3)}f^{24}_\#-\frac{pb_2-rb_1-b_2}{rb_3(b_3-1)}f^{34}_\#.$

			\end{enumerate}

			$\bullet$  $A_{4,5}^{-1,\be}$,  $|\be|<1$
			\begin{enumerate}
				\item The isomorphism from $\A_3$: $f_1=\frac{p+1}{r}e_1+e_2$, $f_2=\frac{p-1}{r}e_1+e_2$, $f_3=-\frac{p^2b_2-prb_1-rb_1b_3-b_2}{r(b_3^2-1)}e_1-\frac{pb_2-rb_1-b_2b_3}{b_3^2-1}e_2+e_3$, $f_4=e_4$.
				
				The generalized complex structure:

				$\begin{cases} J_1=\la E_{11}-\frac{\la(pb_2-rb_1-b_2)}{2(b_3-1)}E_{13}+\frac{pb_2-rb_1-b_2}{2(b_3-1)}E_{14}+\la E_{22}+\frac{\la(pb_2-rb_1+b_2)}{2(b_3+1)}E_{23}-\frac{pb_2-rb_1+b_2}{2(b_3+1)}E_{24}+E_{34}-E_{43}\\
				R_1=-\frac{r}{2}f_{12}^\#\\
				\si_1=-(\la^2+1)(\frac{2}{r}f^{12}_\#+\frac{pb_2-rb_1+b_2}{r(b_3+1)}f^{13}_\#+\frac{pb_2-rb_1-b_2}{r(b_3-1)}f^{23}_\#).
				\end{cases}$

				We have
				\[ \phi(T)\exp(B)(J_1,R_1,\si_1)\exp(-B)\phi(T^{-1})=(E_{34}-E_{43},-f_{12}^\#,-f^{12}_\#). \]
				where
				
				$T=\frac{2}{r}E_{11}-\frac{\la( pb_2-rb_1-b_2)}{r(b_3-1)}E_{14}+E_{22}+\frac{\la(pb_2-rb_1+b_2)}{2(1+b_3)}E_{24}+E_{33}+E_{44}$,
				
				$B= \frac{2\la}{r}f^{12}_\#-\frac{pb_2-rb_1+b_2}{r(b_3+1)}f^{14}_\#-\frac{pb_2-rb_1-b_2}{r(+b_3-1)}f^{24}_\#.$
				
				\item The isomorphism from $\A_3$: $f_1=e_1$, $f_2=-\frac {q}{2p}e_1+e_2$,  $f_3=-\frac {pb_1+qb_2+b_1b_3}{p^2-b_3^2}e_1+\frac {b_2}{p+b_3}e_2+e_3$, $f_4=\frac{1}{p}e_4$.
				
				The generalized complex structure:
				
				$\begin{cases} J_2=\la E_{11}-\frac{\la (2pb_1+qb_2)}{2p(p-b_3)}+\frac{2pb_1+qb_2}{2p^2(p-b_3)}E_{14}+\la E_{22}+
				\frac{\la b_2}{b_3+p}-\frac {b_2}{p(p+ b_3)}E_{24}+\frac{1}{p}E_{34}-pE_{43}\\
				R_2=-f_{12}^\#\\
				\si_2=-(\la^2+1)(f^{12}_\#+\frac{b_2}{b_3+p}f^{13}_\#+\frac{2pb_1+qb_2}{2p(p -b_3)}f^{23}_\#).
				\end{cases}$

				We have
				\[\phi(T)\exp(B)(J_2,R_2,\si_2)\exp(-B)\phi(T^{-1})    =(E_{34}-E_{43},-f_{12}^\#,-f^{12}_\#), \]
				where
				
				$T=E_{11}-\frac{\la( 2pb_1+qb_2)}{2p^2(p-b_3)}E_{14}+E_{22}+\frac{\la b_2}{p(b_3+p)}E_{24}+
				pE_{33}+E_{44}$,

				$B=\la f^{12}_\#-\frac{b_2}{p( b_3+p)} f^{14}_\#-\frac{2pb_1+qb_2}{2p^2(p-b_{{3}})} f^{24}_\#.$
				
			\end{enumerate}

			$\bullet$ $A_{45}^{\al,\al}$, $-1<\al\leq1$
			\begin{enumerate}
				\item The isomorphism from $\B_3$: $f_1=e_1$, $f_2=-\frac{q_2}{x+1}e_1+e_4$, $f_3=-\frac{q_1}{x+1}e_1+e_3$, $f_4=e_2$.
				
				The generalized complex structure:
				
				$\begin{cases} J_1=\la E_{11}-\frac{\la q_2-q_1}{x+1}E_{12}-\frac{\la q_1+q_2}{x+1}E_{13}-E_{23}+E_{32}+\la E_{44}\\
				R_1=-f_{14}^\#\\
				\si_=(\la^2+1)(-f^{14}_\#+\frac{q_2}{x+1}f^{24}_\#+\frac{q_1}{x+1}f^{34}_\#).
				\end{cases}$	
				
				We have
				\[ \phi(T) \exp(B)(J_1,R_1,\si_1)\exp(-B)\phi(T^{-1})=(E_{23}-E_{32},-f_{14}^\#,-f^{14}_\#),\]
				where
				
				$T=E_{11}-E_{22}+E_{33}+E_{44}$,
				
				$B= \la f^{14}_\#-\frac {\la q_2-q_1}{x+1}f^{24}_\#-
				\frac {\la q_1+q_2}{x+1}f^{34}_\#.$
				\item The isomorphism from $\B_3$: $f_1=e_1$, $f_2=e_3$, $f_3=e_4$, $f_4=e_2$.
				
				The generalized complex structure:
				
				$\begin{cases} J_2=\la E_{11}+E_{23}-E_{32}+\la E_{44}\\
				R_2=-f_{14}^\#\\
				\si_2=-(\la^2+1)f^{14}_\#.
				\end{cases}$	
				
				We have
				\[  \exp(B)(J_2,R_2,\si_2)\exp(-B)=(E_{23}-E_{32},-f_{14}^\#,-f^{14}_\#), \]
				where $B=f^{14}_\#.$
				
			\end{enumerate}
			
			$\bullet$ $A_{4,5}^{\al,1}$, $|\al|<1$
			
			The isomorphism from $\B_3$: $f_1=-\frac{q_2x}{x+1}e_1+e_4$, $f_2=e_1$, $f_3=-\frac{q_1x}{x+1}e_1+e_3$, $f_4=e_2$ .
			
			The generalized complex structure:
			
			$\begin{cases} J=-E_{13}-\frac{x(\la q_2-q_1) }{x+1}E_{21}+\la E_{22}-\frac{x(\la q_1+q_2)}{x+1}E_{23}+E_{31}+\la E_{44}\\
			R=-f_{24}^\#\\
			\si=(\la^2+1)(f^{14}_\#-\frac{q_2 x}{x+1}f^{24}_\#+\frac{q_1 x}{x+1}f^{34}_\#).
			\end{cases}$

			We have
			\[ \exp(B)(J,R,\si)\exp(-B)=(-E_{13}+E_{31},-f_{24}^\#,-f^{24}_\#),\]
			where
			$B=-\frac{x(\la q_2-q_1 )}{x+1}f^{14}_\#+\la f^{24}_\#-\frac{x(\la q_1+q_2 )}{x+1}f^{34}_\#$.

			$\bullet$ $A_{4,5}^{-1,1}$
			\begin{enumerate}
				\item The isomorphism from $\A_3$: $f_1=\frac{b_2}{2p}e_2+e_3$, $f_2=-\frac{q}{2p}e_1+e_2$, $f_3=e_1$, $f_4=\frac{1}{p}e_4$.
				
				The generalized complex structure:
				
				$\begin{cases} J_1=\frac{1}{p}E_{14}+\frac{\la b_2}{2p}E_{12}+\la E_{22} -\frac{b_2}{2p^2}E_{24}+\frac{\la qb_2}{4p^2}E_{31}+\la E_{33}-\frac{b_2q}{p^3}E_{34}-pE_{14}\\
				R_1=f_{23}^\#\\
				\si_1=(\la^2+1)(-\frac{qb_2}{4p^2}f^{12}_\#+\frac{b_2}{2p}f^{13}_\#+f^{23}_\#).
				\end{cases}$

				We have
				\[\phi(T) \exp(B)(J_1,R_1,\si_1)\exp(-B)\phi(T^{-1})=(E_{41}-E_{14},-f_{23}^\#,-f^{23}_\#),\]
				where

				$T=pE_{11}-E_{22}-\frac{\la b_2}{2p^2}E_{24}+E_{33}+E_{44}$,
				
				$B=\frac {\la qb_2}{4p^2}f^{12}_\#-\la f^{23}_\#+\frac{b_2q}{4p^3}f^{24}_\#-\frac{b_2}{2p^2}f^{34}_\#.
				$
				\item The isomorphism from $\A_3$: $f_1=\frac{p+1}{r}e_1+e_2$, $f_2=\frac{p-1}{r}e_1+e_2$,  $f_3=-\frac{b_2}{r}e_1+e_3$, $f_4=e_4$.
				The generalized complex structure:
				
				$\begin{cases} J_2=\la E_{11}-\frac{\la b_2}{2}E_{13}+\frac{b_2}{2}E_{14}+\la E_{22}+\frac{\la b_2}{2}E_{23}-\frac{ b_2}{2}E_{24}+E_{34}-E_{43}\\
				R_2=-\frac{r}{2}f_{12}^\#\\
				\si_2=-(\la^2+1)(2f^{12}_\#+\frac{b_2}{r}f^{13}_\#+\frac{b_2}{r}f^{23}_\#).
				\end{cases}$

				We have
				\[\phi(T)\exp(B)(J_2,R_2,\si_2)\exp(-B)\phi(T^{-1})=(E_{41}-E_{14},-f_{23}^\#,-f^{23}_\#),\]
				where
				
				$T=E_{13}-\frac{2}{r}E_{22}-\frac{\la b_2}{r}E_{24}+E_{31}+E_{44}
				$,
				
				$B=\frac{2\la}{r}f^{13}_\#-\frac{b_2}{r}f^{14}_\#+\frac{\la b_2}{r}f^{23}_\#-\frac{b_2}{r}f^{24}_\#.
				$
				
				\item The isomorphism from $\A_3$: $f_1=\frac{p-1}{r}e_1+e_2$, $f_2=\frac{p+1}{r}e_1+e_2$, $f_3=\frac{b_2}{r}e_1-e_3$, $f_4=-e_4$.
				The generalized complex structure:
				
				$\begin{cases} J_3=\la E_{11}-\frac{\la b_2}{2}E_{13}+\frac{b_2}{2}E_{14}+\la E_{22}+\frac{\la b_2}{2}E_{23}-\frac{ b_2}{2}E_{24}+E_{34}-E_{43}\\
				R_3=\frac{r}{2}f_{12}^\#\\
				\si_3=(\la^2+1)(2f^{12}_\#+\frac{b_2}{r}f^{13}_\#+\frac{b_2}{r}f^{23}_\#).
				\end{cases}$

				We have
				\[\phi(T)\exp(B)(J_2,R_2,\si_2)\exp(-B)\phi(T^{-1})=(E_{41}-E_{14},-f_{23}^\#,-f^{23}_\#),\]
				where
				
				$T=E_{13}+\frac{2}{r}E_{22}+\frac{\la b_2}{r}E_{24}+E_{31}+E_{44}
				$,
				
				$B=-\frac{2\la}{r}f^{13}_\#+\frac{b_2}{r}f^{14}_\#-\frac{\la b_2}{r}f^{23}_\#+\frac{b_2}{r}f^{24}_\#.
				$

			\end{enumerate}
			
			$\bullet$ $A_{4,6}^{\al,\be}$
			\begin{enumerate}
				\item The isomorphism from $\B_4$: $f_1=e_1$, $f_2=-e_3$, $f_3=e_4$, $f_4=\frac{1}{y}e_2$.
				
				The generalized complex structure:

				$\begin{cases} J_1=\la E_{11}-E_{23}+E_{32}+\la E_{44}\\
				R_1=-yf_{14}^\#\\
				\si_1=-\frac{\la^2+1}{y}f^{14}_\#.
				\end{cases}$

				We have
				\[\phi(T) \exp(B)(J_1,R_1,\si_1)\exp(-B)\phi(T^{-1})=(-E_{23}+E_{32},-f_{14}^\#,-f^{14}_\#),\]
				where  $T=Id_4+\frac{1-y}{y}E_{11}$,  $B=\frac{\la}{y}f^{14}_\#$.

				\item The isomorphism from $\B_4$: $f_1=e_1$, $f_2=e_3$, $f_3=e_4$, $f_4=-\frac{1}{y}e_2$.
				
				The generalized complex structure:

				$\begin{cases} J_2=\la E_{11}+E_{23}-E_{32}+\la E_{44}\\
				R_2=yf_{14}^\#\\
				\si_2=\frac{\la^2+1}{y}f^{14}_\#.
				\end{cases}$
				
				We have
				\[\phi(T) \exp(B)(J_2,R_2,\si_2)\exp(-B)\phi(T^{-1})=(E_{23}-E_{32},-f_{14}^\#,-f^{14}_\#),\]
				where $T=Id_4-\frac{y+1}{y}E_{11}$, $B=-{\frac {\la}{y}}f^{14}_\#.$
				
				\item The isomorphism from $\B_3$:$f_1=e_1$, $f_2=e_1-\frac{xq_1+yq_2+q_1}{q_1^2+q_2^2}e_3-\frac {xq_2-yq_1+q_2}{q_1^2+q_2^2}e_4$, $f_3=-\frac {xq_2-yq_1+q_2}{q_1^2+q_2^2}e_3+\frac{xq_1+yq_2+q_1}{q_1^2+q_2^2}e_4$, $f_4=\frac1y e_2$.
				
				The generalized complex structure:

				$\begin{cases} J_3=\la E_{11}+\la E_{12}+E_{13}-E_{23}+E_{32}+\la E_{44}\\
				R_3=-y f_{14}^\#\\
				\si_3=-\frac{\la^2+1}{y}(f^{14}_\#+f^{24}_\#).
				\end{cases}$

				We have
				\[ \phi(T)\exp(B)(J_3,R_3,\si_3)\exp(-B)\phi(T^{-1})=(-E_{23}+E_{32},-f_{14}^\#,-f^{14}_\#), \]
				where
				
				$T=Id_4+\frac{y-1}{y}E_{11}$,
				
				$B=\frac{\la}{y}f^{14}_\#+\frac{\la}{y}f^{24}_\#+\frac{1}{y}f^{34}_\#.$
				
				\item  The isomorphism from $f_1=e_1$, $f_2=-e_1+\frac {xq_{{1}}+yq_{{2}}+q_{{1}}}{{q_{{1}}}^{2}+{q_{{2}}}^{2}}e_3+\frac {q_{{2}}x-q_{{1}}y+q_{{2}}}{{q_{{1}}}^{2}+{q_{{2}}}^{2}}e_4$, $f_3=-\frac {q_{{2}}x-q_{{1}}y+q_{{2}}}{{q_{{1}}}^{2}+{q_{{2}}}^{2}}e_3+\frac {xq_{{1}}+yq_{{2}}+q_{{1}}}{{q_{{1}}}^{2}+{q_{{2}}}^{2}}e_4$, $f_4=-\frac{1}{y} e_2$.
				
				The generalized complex structure:

				$\begin{cases} J_4=\la E_{11}-\la E_{12}+E_{13}+E_{23}-E_{32}+\la E_{44}\\
				R_4=y f_{14}^\#\\
				\si_4=\frac{\la^2+1}{y}(f^{14}_\#-f^{24}_\#).
				\end{cases}$
				
				We have
				\[ \phi(T)\exp(B)(J_4,R_4,\si_4)\exp(-B)\phi(T^{-1})=(E_{23}-E_{32},-f_{14}^\#,-f^{14}_\#), \]
				where
				
				$T=Id_4-\frac{y+1}{y}E_{11}$,
				
				$B=-\frac{\la}{y}f^{14}_\#+\frac{\la}{y}f^{24}_\#-\frac{1}{y}f^{34}_\#.$
			\end{enumerate}

			$\bullet$ $A_{4,6}^{\al,0}$
			
			The isomorphism from $\A_3$: $f_1=-\frac{p^2b_2-prb_1-rb_1b_3+b_2}{r(b_3^2+1)}e_1-\frac{pb_2-rb_1-b_2b_3}{b_3^2+1}e_2+e_3$,
			$f_2=e_1$, $f_3=pe_1+re_2$, $f_4=-e_4$.
			
			The generalized complex structure:
			
			$\begin{cases} J=-E_{14}-\frac{\la(pb_2b_3-rb_1b_3+b_2)}{r(b_3^{2}+1)}E_{21}+\la E_{22}-\frac{pb_2b_3-rb_1b_3+b_2}{r(b_3^2+1)}E_{24}-\frac{\la( pb_2-rb_1-b_2b_3)}{r(b_3^2+1)}E_{31}+\la E_{33}-\frac {pb_2-rb_1-b_2b_3}{r(b_3^2+1)}E_{34}+E_{41}\\
			R=-\frac{1}{r}f_{23}^\#\\
			\si=(\la^2+1)(-\frac{pb_2-rb_1-b_2b_3}{b_3^2+1}f^{12}_\#+\frac {pb_2b_3-rb_1b_3+b_2}{b_3^2+1}f^{13}_\#-rf^{23}_\#).
			\end{cases}$
			
			We have
			\[\phi(T) \exp(B)(J,R,\si)\exp(-B)\phi(T^{-1})= \frac{|r|}{r}(E_{14}-E_{14},-f_{23}^\#,-f^{23}_\#),\]
			where
			
			$T=-sE_{11}+\sqrt{|r|}E_{22}+\frac{s\la(pb_2b_3-rb_1b_3+b_2)}{\sqrt{|r|}(b_3^2+1)}E_{24}+\sqrt {|r|}E_{33}+{\frac {s\lambda\, (pb_2-rb_1-b_2b_3) }{\sqrt{|r|}(b_3^2+1)}}E_{34} +E_{44}$,

			$B= \la rf^{23}_\#-\frac{pb_2-rb_1-b_2b_3}{b_3^2+1}f^{24}_\#+\frac {pb_{{2}}b_{{3}}-rb_{{1}}b_{{3}}+b_{{2}}}{{b_{{3}}}^{2}+1}f^{34}_\#.$

			$\bullet$ $A_{9,4}^{-\frac{1}{2}}$
			
			The isomorphism from $\A_4$: $f_1=2q_1e_1$, $f_2=-(\frac{1}{3}b_2q_2+b_1)e_1-\frac{1}{3}b_2e_2+e_3$, $f_3=-q_2e_1+2e_2$, $f_4=-\frac{1}{2}e_4$.
			
			The generalized complex structure:
			
			$\begin{cases} J=\la E_{11}-\frac{\la (b_2q_2+2b_1)}{4q_1}E_{12}-\frac{b_2q_2+2b_1}{8q_1}E_{14}-\frac{1}{2}E_{24}-\frac{\la b_2}{6}E_{32}+\la E_{33}-\frac{b_2}{12}E_{34}+2E_{42}\\
			R=-\frac{1}{4q_1}f_{13}^\#\\
			\si=(\la^2+1)(\frac{2q_1b_2}{3}f^{12}_\#-4q_1f^{13}_\#+(b_2q_2+2b_1)f^{23}_\#).
			\end{cases}$

			We have
			\[ \exp(B)\phi(T)(J,R,\si)\phi(T^{-1})\exp(-B)=(-sE_{24}+sE_{42},-f_{13}^\#,-f^{13}_\#),\qquad s=\frac{|q_1|}{q_1}, \]
			where
			
			$T=2s\sqrt{2|q_1|}E_{11}+s\la b_2\frac{\sqrt{2|q_1|}}{3}E_{12}+\frac{\sqrt {2}(3
				\la b_2q_2+6\la b_1-2q_1b_2)
			}{12\sqrt{|q_1|}}E_{14}+2E_{22}+\sqrt{2|q_1|}E_{33}+\la b_2\frac{\sqrt{2|q_1|}}{12}E_{34}+E_{44}$,
			
			$B=\la f^{13}_\#- B_{14}f^{14}_\#-B_{23}f^{23}_\#- B_{34}f^{34}_\#.$
			
			where
			
			$B_{14}=\frac{\sqrt {2}}{12}(\sqrt{|q_1|}{\lambda}^{2}b_{{2}}+\sqrt{|q_1|}b_{{2}})$
			
			$B_{23}=\frac{\sqrt {2}}{6}(\sqrt{|q_1|}{\lambda}^{2}b_{
				{2}}+\sqrt{|q_1|}b_{{2}})$
			
			$B_{34}=-\frac{\sqrt {2}}{4}{\frac{\left( {\lambda}^{2}b_{{2}}q_{{2}}+2\,{\lambda}^{2}b_{{1}}+b_{{2}}q_{
						{2}}+2\,b_{{1}} \right)}{\sqrt{|q_1|}}}$.
			
			$\bullet$ $A_{9,4}^1$
			
			The isomorphism from $\B_3$: $f_1=e_1$, $f_2=-2q_1e_1+e_3$, $f_3=(-2q_1-2q_2)e_1+e_3+e_4$, $f_4=2e_2$.
			
			The generalized complex structure:
			
			$\begin{cases} J=\la E_{11}-2(\la q_1+2q_2)E_{12}-(2q_1(\la-1)+2q_2(\la+1))E_{13}+E_{22}+2E_{23}-E_{32}-E_{33}+\la E_{44}\\
			R=-\frac{1}{2}f_{14}^\#\\
			\si=(\la^2+1)(-2f_{14}^\#+4q_1f_{24}^\#+4(q_1+q_2)f_{34}^\#).
			\end{cases}$

			We have
			\[ \phi(T)\exp(B)(J,R,\si)\exp(-B)\phi(T^{-1})=(-E_{23}+E_{32},-f_{14}^\#,-f^{14}_\#), \]
			where
			
			$T=2E_{11}+\frac{\sqrt{3}+1}{2}E_{22}+E_{23}+\frac{\sqrt{3}-1}{2}E_{32}+\sqrt{3}E_{33}+E_{44}$,
			
			$B=2\la f^{14}_\#+\la  f^{23}_\#-(4\,\lambda\,q_{{1}}+4q_{{2}}) f^{24}_\#-(4q_1(\la-1)+4q_2(\la+1)) f^{34}_\#.
			$

			$\bullet$ $A^\al_{4,11}$
			
			The isomorphism from $\B_4$: $f_1=-se_1$, $f_2=\frac{4yq_2+2q_1}{4y^2 + 1}e_1-e_3$, $f_3=s(\frac{4yq_1-2q_2}{4y^2+1}e_1+e_4)$, $f_4=\frac{s}{y}e_2$, $s=\frac{|y|}{y}$.
			
			The generalized complex structure:
			
			$\begin{cases} J=\la E_{11}-s{\frac {4\la yq_2+2\la q_1-4yq_1+2q_2}{4{y}^{2}+1}}E_{12}-{\frac{4\la yq_1-2\la q_2+4yq_2+2q_1}{4{y}^{
						2}+1}}E_{23}-sE_{23}+sE_{32}+\la E_{44}\\
			R=y f_{14}^\#\\
			\si=(\la^2+1)(\frac{1}{y}f^{14}_\#-\frac{s(4yq_2+2q_1)}{y(4y^2+1)}f^{24}_\#-\frac{4yq_1-2q_2}{y(4y^2+1)})f^{34}_\#.
			\end{cases}$

			We have
			\[ \exp(B)(J,R,\si)\exp(-B)=(sE_{23}-sE_{32},yf_{14}^\#,\frac{1}{y}f^{14}_\#),\]
			where
			
			$B=-{\frac{\la}{y}}f^{14}_\#+{\frac {s(4\la yq_{{2}}+2\la q_{{1}}-4yq_{{1}}+2q_{{2}})}{y
					(4{y}^{2}+1)}}f^{24}_\# +{\frac {4\lambda\,yq_{{1}}-2\lambda\,q_{{2
						}}+4\,yq_{{2}}+2\,q_{{1}}}{y \left( 4\,{y}^{2}+1 \right) }}f^{34}_\#.$

				$\bullet$ $A_{4,12}$
				\begin{enumerate}
					\item
					
					The isomorphism from $\A_1$: $f_1=-e_3$, $f_2=e_4$, $f_3=-e_1+\frac{y_1}{y_2}e_2$, $f_4=\frac{1}{y_2}e_2$.
					
					The generalized complex structure:

					$\begin{cases} J_1=E_{12}-E_{21}+\la E_{33}+\la E_{44}\\
					R_1=-f_{34}^\#\\
					\si_1=-\frac{\la^2+1}{y_2}f^{34}_\#.
					\end{cases}$

					We have
					\[ \exp(B)(J_1,R_1,\si_1)\exp(-B)=(E_{12}-E_{21},-y_2f_{34}^\#,-\frac{1}{y_2}f^{34}_\#),\]
					where $B=\frac{\la}{y_2}f^{34}_\#$.

					\item The isomorphism from $\A_5$: $f_1=pe_2+e_3+e_4$, $f_2=-pe_1+e_3-e_4$, $f_3=-\frac12e_1-\frac12e_2$, $f_4=\frac12e_1-\frac12e_2$.
					
					The generalized complex structure:
					
					$\begin{cases} J_2=-E_{12}+E_{21}-p(\la+1)E_{31}+p(\la-1)E_{32}+\la E_{33}-p(\la-1)E_{41}-p(\la+1)E_{42}+\la E_{44}\\
					R_2=-2f_{34}^\#\\
					\si_2=-(\la^2+1)(p^2f^{12}_\#+\frac{p}{2}f^{13}_\#-\frac{p}{2}f^{14}_\#+\frac{p}{2}f^{23}_\#+\frac{p}{2}f^{24}_\#+\frac{1}{2}f^{34}_\#).
					\end{cases}$

					we have

					\[ \exp(B)(J_2,R_2,\si_2)\exp(-B)=(-E_{12}+E_{21},-2f_{34}^\#,-\frac{1}{2}f^{34}_\#),\]
					where
					
					$B=\frac{\la p-p}{2}f^{13}_\#-\frac{\la p+p}{2}f^{14}_\#+\frac{\la p+p}{2}f^{23}_\#
					-\frac{\la p-p}{2}f^{24}_\#+\frac{\la}{2}f^{34}_\#.$
				\end{enumerate}

\end{document}